
\documentclass[]{interact}

\usepackage{epstopdf}
\usepackage{subfigure}

\usepackage{natbib}
\bibpunct[, ]{(}{)}{;}{a}{}{,}
\usepackage{url}

\theoremstyle{plain}
\newtheorem{theorem}{Theorem}[section]
\newtheorem{lemma}[theorem]{Lemma}
\newtheorem{corollary}[theorem]{Corollary}

\theoremstyle{definition}

\newtheorem{example}[theorem]{Example}

\theoremstyle{remark}

\usepackage{amsthm,amsmath,amssymb}
\usepackage{mathrsfs}

\usepackage{soul}
\usepackage{color, xcolor}
\usepackage{booktabs}
\usepackage{longtable}

\soulregister\cite7
\soulregister\ref7 
\soulregister\cite7 
\soulregister\citep7 
\soulregister\citet7 
\soulregister\ref7 
\soulregister\pageref7 

\begin{document}

\articletype{Original Article}

\title{Improved formulations of the joint order batching and picker routing problem}

\author{
\name{Kai Zhang\textsuperscript{a} and Chuanhou Gao\textsuperscript{a}\thanks{CONTACT C.H. Gao. Author. Email: gaochou@zju.edu.cn}}
\affil{\textsuperscript{a}School of Mathematical Sciences, Zhejiang University, Hangzhou, China}
}

\maketitle

\begin{abstract}
Order picking is the process of retrieving ordered products from storage locations in warehouses. \hl{In picker-to-parts order picking systems, two or more customer orders may be grouped and assigned to a single picker. Then routing decision regarding the visiting sequence of items during a picking tour must be made. \citep{doi:10.1080/00207540410001733896} found that solving the integrated problem of batching and routing enables warehouse managers to organize order picking operations more efficiently compared with solving the two problems separately and sequentially. We therefore investigate the mathematical programming formulation of this integrated problem.}

We present several improved formulations for the \hl{problem based on the findings} of \citep{VALLE2017817}, \hl{that} can significantly improve computational results. \hl{More specifically, we reconstruct the connectivity constraints and generate new cutting planes in our branch-and-cut framework. We also discuss some problem properties by studying the structure of the graphical representation, and we present two types of additional constraints. We also consider the no-reversal case of this problem. We present efficient formulations by building different auxiliary graphs.} Finally, we present computational results for publicly available test problems for \hl{single-block and multiple-block} warehouse configurations.
\end{abstract}

\begin{keywords}
Integer programming; inventory management; order batching; order picking; picker routing
\end{keywords}

\section{Introduction} \label{intro}

In modern business environments, warehousing and relative order picking processes are essential components of any supply chain \citep{Warehousing@Manzini2012}. Order picking is the process of retrieving ordered products from storage locations in warehouses, \hl{and it} typically accounts for 55\% of the total warehouse operating expense and has long been recognized as the most labor-intensive and costly activity for warehouses. Therefore, the order picking process should be robustly designed and optimally controlled to handle requirements efficiently \citep{DEKOSTER2007481,Tompkin2010}. 

\hl{The order picking process should be investigated within a system context. In picker-to-parts systems, pickers walk or ride through the picking area to collect the requested items. In parts-to-picker systems, automated cranes move along the aisle, retrieve unit loads, and bring them to a pick position. This study focuses on order picking operations in picker-to-parts systems, which still account for the large majority of all order picking systems \citep{DEKOSTER2007481,doi:10.1080/13675567.2014.945400,VANGILS20181}. 
}

\hl{According to \citep{VANGILS20181}, decisions to manage order picking in picker-to-parts systems can be classified into strategic, tactical or operational decisions. In this paper we focus on the operational planning problems that typically concern daily operations. Operational planning problems include (1) how batches of orders are generated (batching), (2) how each picker is routed (routing), (3) how the daily required number of order pickers is determined (workforce level), (4) how a given workforce is allocated to the picking and sorting operations (workforce allocation), and (5) how the picking orders are sequenced (job assignment).} Some researchers focus on individual planning problems, \hl{have developed} many efficient algorithms, and have successfully applied \hl{theses algorithms} to suboptimal problems. However, \hl{simultaneously} optimizing multiple order picking planning problems \hl{can} result in even more efficient picking operations. \citep{VANGILS20181} presented a comprehensive review of research on combination problems in order picking systems. According to their study, the joint order batching and picker routing problem (JOBPRP), which is also the focus of attention in this study, has received more attention than other combination problems (for example, the integrated problem of order batching and job assignment). \hl{One of the first works that considers this integrated problem was \citep{doi:10.1080/00207540410001733896}. The crucial observation from their simulation experiment is that a simultaneous solution yields significantly better performance benefits than a sequential solution. More recently, \citep{Scholz2017} integrated different routing algorithms into a heuristic approach for the batching problem. Their numerical experiments demonstrated the benefits from solving the joint problem.} 

\hl{We now introduce the JOBPRP from practical application viewpoints. In picker-to-parts systems, an order picker is guided by a pick list that comprises one or more customer orders on his/her picking tour. A customer order typically requires a list of distinct products, and customer orders can be converted into a pick list until the capacity of the picking device is exhausted. To minimize the total travel distance, order pickers have to decide how the orders should be assigned to picking tours, which give rise to the so-called order batching problem. For each picking tour, the shortest path to visit a set of picking locations should be determined, which give rise to the so-called picker routing problem. Those two problems are thought to be strongly linked, because solving the routing problem is dependent on the solution of the batching problem. Obviously, solving the integrated problem, the JOBPRP, can improve the efficiency of order pickers.}

\hl{Although} there are many heuristic-solving frameworks, very few exact algorithms for the JOBPRP have been proposed in the literature. \citep{VALLE2017817} proposed a novel exact algorithm that incorporates a non-compact integer programming formulation and a branch-and-cut procedure. The original model proposed by \citep{Valle2016} can be significantly improved by adding valid inequalities based on the standard layout of warehouses. Inspired by \citep{VALLE2017817}, we analyze existing models for the JOBPRP and propose modified solution approaches to improve computational performance. The following are the main \hl{contributions} of our study:

\hl{(1) We reconstruct the connectivity constraints by making full use of the properties of a rectangular warehouse with multiple blocks. The improved formulations achieve computational efficiency.

(2) We also discuss some
problem properties by studying the structure of a graphical representation, and we present two types of additional constraints.} 

(3) \hl{We also consider} the no-reversal JOBPRP. We propose a traveling salesman problem (TSP) formulation for single-block and 2-block warehouses by introducing two auxiliary graphs. 

(4) We conduct a series of numerical experiments to evaluate our formulations. The test instances used here are generated using \hl{the method provided by} \citep{Valle2016}.

The remainder of this \hl{paper} is organized as follows. Section~\ref{review} comprises a literature review regarding the JOBPRP and some closely related problems. Section~\ref{prodes} briefly introduces the JOBPRP and presents a graph-based formulation for this problem. Section~\ref{imp_formulation} presents two improved formulations, and Section~\ref{additional_cons} introduces two types of additional constraints to improve solution \hl{quality}. Section~\ref{no-revcase} discusses the no-reversal JOBPRP in detail. Section~\ref{computrel} reports on some computational results and observations, and Section~\ref{conclusion} concludes the study.

\section{Literature review}\label{review}

Picker routing and order batching problems have received considerable interest since the 1980s \citep{doi:10.1080/00207548108956683,Vannelli1986,10.1287/opre.31.3.507,tspp1985}. In this section, we provide a summary of some previous studies \hl{on} order batching and picker routing problems. 

\subsection{Picker routing problem}\label{prp}

This problem can be solved to optimality using any exact approach to the TSP. However, more efficient solution approaches can be obtained using \hl{a} particular warehouse layout. The first attempt to provide a problem-specific exact approach to picker routing problems was proposed \hl{by} \citep{10.1287/opre.31.3.507}. They constructed a sparse graph representation for a rectangular warehouse containing a single block and presented a polynomial-time dynamic programming algorithm. \hl{Researchers then developed two generalized algorithms based} on the work of \citep{10.1287/opre.31.3.507}. \hl{\citep{tspp1985} interpreted the routing problem as a Steiner travelling salesman problem (STSP) and extended the Ratliff-Rosenthal algorithm to all series-parallel graphs; \citep{doi:10.1080/00207540110028128} modified the Ratliff-Rosenthal algorithm and introduced routing heuristics for 2-block and more complex layouts.} Recently, \citep{SCHOLZ201668} proposed an exact approach regarding the unique structure of a single-block warehouse. They introduced integer programming whose size is independent of the number of picking locations \hl{and} demonstrated that this formulation can significantly improve computational performance.

Because the TSP is NP-hard, optimal routing is often regarded as difficult to determine. Many heuristics have been proposed for the problem from a practical standpoint based on different routing strategies, including S-shape \citep{doi:10.1080/07408178808966150}, midpoint\citep{doi:10.1080/07408179308964306}, largest gap \citep{doi:10.1080/07408179308964306}, combined \citep{Petersen1997} and aisle-by-aisle \citep{doi:10.1080/002075499191580}. TSP heuristics can also be used to solve the picker routing problem. \citep{THEYS2010755} discovered that the Lin--Kernighan--Helsgaun heuristic \citep{HELSGAUN2000106} outperforms the S-shape heuristic when there are two or more blocks in the warehouse. \citep{CAMBAZARD2018419} developed a dynamic programming approach for \hl{a} rectilinear TSP, \hl{and} the algorithm is also applicable to the picker routing problem. However, the complexity grows exponentially with the number of blocks. \hl{Other heuristics are created from metaheuristics}. For example, \citep{HoTseng2006} proposed a simulating annealing heuristic which is integrated with the largest gap routing strategy. The reader interested in the picker routing problem may refer to \citep{MASAE2020107564} for a comprehensive review. 

\subsection{Order batching problem}\label{obp}

\hl{The order batching problem can be formally defined as follows: How, given the capacity of the picking device and the adopted routing strategy, can a given set of customer orders with known storage locations be grouped into picking orders such that the total lengths of all picker tours is minimized? \citep{Wscher2004OrderPA,Warehousing@Manzini2012}
}

\hl{Because the order batching problem is known to be NP-hard \citep{doi:10.1080/07408170108936837}, exact approaches are typically impractical for instances with relatively large sizes. As a result, many scholars consider heuristics and metaheuristics. According to \citep{Warehousing@Manzini2012}, batching heuristics can be distinguished into savings, seed, or priority rule-based algorithms as well as other algorithms. Savings algorithms are based on the algorithm of \citep{doi:10.1287/opre.12.4.568} for the vehicle routing problem. The initial version of the savings algorithm for
the batching problem can be described as follows: savings are computed in terms of reducing the travel distance by collecting items for two customer orders on a single picking tour instead of collecting them separately, and then, orders are sequentially assigned to
batches based on the savings (e.g., see \citep{doi:10.1080/00207548908942610,doi:10.1080/00207540600920850}). Meanwhile, the seed algorithm introduced by \citep{doi:10.1080/00207548108956683} generates batches by
means of a two-phase procedure: a seed order
is first selected and added to a new batch according to a seed selection rule, and then, unassigned orders are added to this batch according to an order addition rule (e.g., see \citep{GIBSON199257,doi:10.1080/00207549608904926,doi:10.1080/00207540600558015}). The priority rule-based algorithm also consists of a two-step procedure: first, priorities are assigned to the customer orders, and then, customer orders are assigned successively to batches in the sequence given by the priorities (e.g., see \citep{PAN1995691,doi:10.1287/mnsc.45.4.575}). There are also some metaheuristics for the order batching problem. The interested reader may refer to \citep{Warehousing@Manzini2012,VANGILS20181,Cergibozan2019} for further details.
}

\subsection{Joint order batching and picker routing problem}\label{joint_rb}

Considering the strong relationship between batching and routing, solving these planning problems in a detailed manner would be beneficial. In recent decades, many efficient heuristic and metaheuristic methods have been proposed to solve the JOBPRP.

\citep{doi:10.1080/00207540410001733896} was one of the first to formulate the batching and routing problem jointly as a combinatorial optimization problem. \hl{Their} proposed two-step heuristic first constructs batches sequentially and then solves the subsequent routing problem. \hl{\citep{doi:10.1080/0740817X.2011.588994} presented a route-selection based formulation that enumerates all possible routes and compared their heuristic solution with a lower bound developed by a relaxation model. \citep{Kulak2012} proposed a tabu search algorithm integrated with a clustering algorithm that generates an initial solution. They also proposed two constructive heuristics to solve the picker routing problem. \citep{RePEc:dar:wpaper:65331} developed a simulated annealing algorithm to determine order batches and picker routes and applied four different heuristics to form initial order batches.} \citep{CHENG2015805} proposed a hybrid approach consisting of a particle swarm optimization for batching, whereas \citep{doi:10.1080/00207543.2016.1187313} proposed a constructive heuristic based on similarity coefficient for batching; both used an ant colony optimization algorithm in the routing procedure. \citep{Scholz2017} introduced an iterated local search algorithm, which allows for integrating different routing algorithms. \citep{ARBEXVALLE2020460} presented an approximate formulation for this problem. They also proposed a partial integer optimization heuristic based on their formulation. \citep{BRIANT2020497} proposed a heuristic based on column generation to deal with an exponential linear programming formulation of the JOBPRP. \citep{AERTS2021105168} modeled the JOBPRP as a clustered vehicle routing problem and applied a two-level variable neighborhood search algorithm developed by \citep{DEFRYN201778}. \citep{doi:10.1080/00207543.2020.1766712} presented a model of JOBRPR under uncertainty; metaheuristics, such as genetic, particle swarm optimization, and artificial honeybee colony algorithms are used as approaches to solve the formulated model.

\hl{To enhance efficiency and customer service, some researchers also take the due dates of the customer orders into account, which initiates the picking sequencing problem. \citep{Tsai2008} considered earliness and tardiness penalties and suggested a genetic algorithm under the assumption that splitting customer orders is allowed. \citep{CHEN2015158} developed a genetic algorithm for the order batching and sequencing processes. For the routing decision for each batch, they adopted an ant colony algorithm. \citep{SCHOLZ2017461} introduced a mixed-integer linear formulation whose size increases polynomially with the number of orders. They also proposed a variable neighborhood descent algorithm that could work with very large problems. Meanwhile, \citep{VANGILS2019814} proposed an iterated local search algorithm to solve the problem effectively and efficiently. They also showed the substantial performance benefits gained from integrating planning problems using a real-life case study.
}

Apart from these heuristic and metaheuristic methods, only a few exact approaches have been proposed in the literature. \citep{Valle2016} presented three basic formulations of the JOBPRP; one of them involves exponentially many constraints and the remaining two are based on network flows. They used the branch-and-cut algorithm presented by \citep{10.1137/1033004} for the first formulation. A JOBPRP-test instance generator based on publicly available real-world data was also introduced. The non-compact formulations proposed by \citep{Valle2016} was improved by \citep{VALLE2017817}. They introduced a significant number of valid inequalities to strengthen the linear relaxation of their formulation. 

\section{Problem description and basic formulation}\label{prodes}

\hl{In this section, we provide more background information regarding the warehouse layout. We then introduce a basic formulation of the JOBPRP.
}
\subsection{Background information}

\hl{We consider a rectangular warehouse with a manual picker-to-parts order-picking system.} The warehouse is composed of one origin, several vertical picking aisles, and several horizontal cross-aisles. We call the part between two adjacent cross-aisles a block, and the section of a picking aisle within a block a subaisle. If a warehouse has $q$ blocks, any picking aisle in the warehouse can be partitioned into $q$ subaisles. We illustrate these concepts in Figure~\ref{warehouse_layout}.

\begin{figure}
\centering
\resizebox*{8.5cm}{!}{\includegraphics{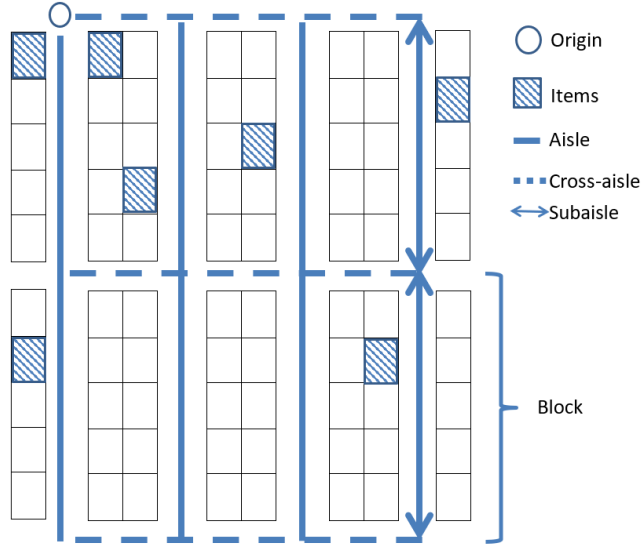}}
\hspace{5pt}
\caption{Example of a 2-block warehouse layout} \label{warehouse_layout}
\end{figure}

\hl{Each vertical aisle contains a set of picking locations on both sides and a picking location contains several storage slots. We assume that each slot holds one type of product and each product type is assigned to only one slot. In addition, products are divided into several classes, and products belonging to the same class are placed in consecutive slots. As a result, most items in a subaisle belong to the same class. The adopted storage assignment policy is actually a variant of the class-based storage policy \citep{Warehousing@Manzini2012}.
}

\hl{Before a shift starts, the number of order pickers available for carrying out the picking operations has been determined. Pickers start at the origin, visit a set of picking locations to retrieve the order products and return to the origin. During a picking tour, a picker is equipped with a trolley that accommodates a limited number of baskets. The necessary number of baskets to carry each customer order is assumed to be known. A customer order typically requires a list of distinct products, and we assume that a customer order cannot be split over various batches. The reason is that mixing and dividing orders can result in an unacceptable consolidation effort \citep{Valle2016,Scholz2017}. 
}

\subsection{Model formulation}

\hl{The basic formulation of the JOBPRP that we discuss in this section is based on the STSP. Thus we first introduce a formulation for the STSP in the context of the single-picker routing problem. Then, we provide the basic formulation for the JOBPRP.
}

Let $V_L$ denote a set of picking locations, each of which is on a subaisle and contains one or more requested products. Let $V_I$ denote a set of endpoints of each subaisle, and we call these points 'artificial locations'. For simplicity, we assume that the origin $s$ is located at the
top left corner of the warehouse, and it just overlaps the first artificial location. \hl{We define a sparse graph using the vertex set $V$ and the edge set $E$. Edges in $E$ connect the following pair of vertices: (1) two neighboring locations within a picking aisle and (2) two neighboring artificial locations within a cross-aisle.} The length $d_e$ of any edge $e\in E$ is equal to the direct distance between two locations. The graphical representation of a warehouse is shown in Figure~\ref{pointset}.

\begin{figure}
\centering
\resizebox*{8.5cm}{!}{\includegraphics{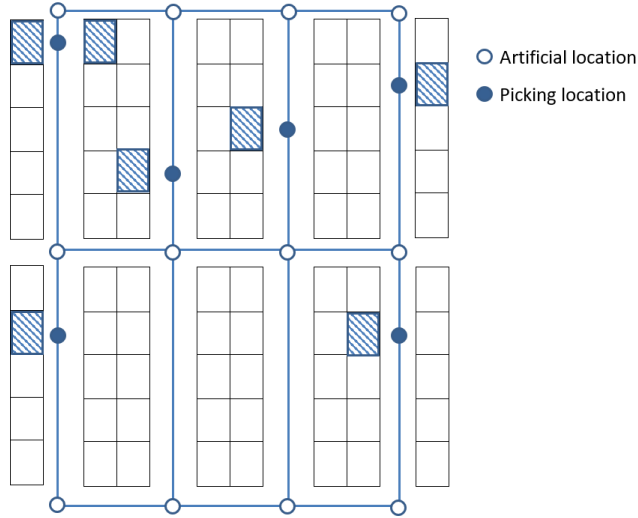}}
\hspace{5pt}
\caption{The related graph optimization problem} \label{pointset}
\end{figure}

Now we \hl{present a non-compact formulation, inspired by the work of \citep{LETCHFORD201383}, for} the single-picker routing problem. Before a picker enters the warehouse, the graphical representation $G=(V,E)$ is already known. The task is to find a \hl{closed} walk by which every $v\in V_L$ is visited. Note that the walk \hl{need} not be Hamiltonian or Euler circuits, that is, a vertex or an edge can be visited more than once by the walk. Let $[u,v]$ denote the unordered pair of location $u$ and location $v$, i.e., $[u,v]$ is the edge connecting $u$ and $v$. For any node set $S \subset V$, $\delta(S)$ denotes the set of edges with exactly one end-node inside $S$. For a single vertex $v \in V$, let $\delta(v) = \delta(\{ v \})$. We introduce a nonnegative decision variable $x_e \in \mathbb{Z}$ to represent the number of times edge $e$ is traversed. We also use a binary decision variable $y_v$ to indicate whether vertex $v \in V \backslash \{ s\}$ is visited by the walk. The single-picker routing problem can be easily described by the following program.
\begin{alignat}{2} 
\label{sp0} \quad \min \quad & \sum_{e\in E} d_e x_e & \qquad & \\
\label{sp1} \mbox {s.t.} \quad & \sum_{e\in \delta(v)}x_e \geq 1, & \qquad & \forall v \in \{ s\} \cup V_L \\ 
\label{sp2} \qquad & y_v \geq \min \{x_e,1\}, & \qquad &  \forall v\in V\backslash \{ s \}, e \in \delta(v)\\
\label{sp3} \qquad & \sum_{e\in \delta(S)} x_e \geq y_v,  & \qquad & \forall v \in S, S\subset V\backslash \{ s\}, |S| \geq 2\\
\label{sp4} \qquad & \sum_{e\in \delta(v)} x_e \colorbox{yellow}{is an even integer}, & \qquad & \forall v \in V\\
\label{sp5} \qquad & x_e \in \{ 0,1,2,...\}, & \qquad &  \forall e \in E\\
\label{sp6} \qquad & y_v \in \{ 0,1 \}, & \qquad &  \forall v \in V
\end{alignat}

Constraints~(\ref{sp1}) ensure that each picking location is visited by the picker. Constraints~(\ref{sp2}) define the $y$ variables for each vertex. Constraints~(\ref{sp3}) guarantee that the multigraph induced by the walk is connected. In \hl{the} TSP, constraints~(\ref{sp3}) are also known as subtour elimination constraints. Constraints~(\ref{sp2}),~(\ref{sp3}), and~(\ref{sp4}) ensure that Euler circuit exists in the multigraph induced by the walk. 

\hl{The abovementioned formulation is treated as a starting point for the JOBPRP. The basic formulation for the JOBPRP is constructed by including the assignment of orders to batches as additional constraints. We start by creating a directed graph $\tilde{G}=(V,\tilde{E})$ from graph $G=(V,E)$: any edge $e=[u,v]\in E$ is replaced with two directed arcs $e_1=(u,v)$ and $e_2=(v,u)$. A formulation based on graph $\tilde{G}$ for the single-picker routing problem can be formulated similarly.} Constraints~(\ref{sp4}) will then be replaced by flow constraints, which are known to be totally unimodular constraints. Furthermore, a very useful theorem proposed in \citep{Valle2016} is as follows.

\begin{theorem}\label{onlyonce} Each directed arc in $\tilde{E}$ can only be traversed once by any optimal walk. 
\end{theorem}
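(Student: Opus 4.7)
The plan is to argue by contradiction and exhibit a strictly shorter feasible walk via a local rerouting argument. Suppose some optimal solution, viewed as a closed directed walk $W = a_1 a_2 \cdots a_k$ at the origin $s$, traverses an arc $(u,v) \in \tilde{E}$ at two positions $i < j$, i.e.\ $a_i = a_j = (u,v)$. Split $W$ into three consecutive pieces: the prefix $A = a_1 \cdots a_{i-1}$, a directed walk from $s$ to $u$; the middle piece $M = a_{i+1} \cdots a_{j-1}$, a directed walk from $v$ to $u$; and the suffix $B = a_{j+1} \cdots a_k$, a directed walk from $v$ to $s$.

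The crucial structural feature I would exploit is the symmetry of $\tilde{E}$: every edge $[u,v] \in E$ contributes both $(u,v)$ and $(v,u)$ to $\tilde{E}$, so the middle piece $M$ can be reversed arc by arc to produce a directed walk $M^{\mathrm{rev}}$ from $u$ to $v$ in $\tilde{G}$ with $\mathrm{len}(M^{\mathrm{rev}}) = \mathrm{len}(M)$. Define $W' = A \circ M^{\mathrm{rev}} \circ B$. Because $A$ ends at $u$, $M^{\mathrm{rev}}$ runs from $u$ to $v$, and $B$ starts at $v$, the concatenation is a well-defined closed directed walk at $s$ that visits exactly the same set of vertices as $W$, in particular every picking location that $W$ visits. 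Translating back to the decision variables, the $x$-values induced by $W'$ still respect the flow-balance constraints that replace (\ref{sp4}) in the directed formulation, since $W'$ is itself a closed walk; the connectivity constraints (\ref{sp3}) hold because every vertex of the induced multigraph still lies on the new walk; and the visit constraints (\ref{sp1})--(\ref{sp2}) hold because the vertex set visited is unchanged.

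A direct length comparison then yields $\mathrm{len}(W') = \mathrm{len}(W) - 2 d_{(u,v)} < \mathrm{len}(W)$ since $d_{(u,v)} > 0$, contradicting the optimality of $W$. The step I expect to require the most care is the feasibility check: one has to verify, arc by arc, that $W'$ is a legitimate directed walk, with matching consecutive endpoints and with every arc drawn from $\tilde{E}$, and that all other constraints of the formulation are untouched; in the JOBPRP setting this includes the batch-capacity and order-assignment constraints, which are unaffected because only the routing component of the solution is altered and the set of visited vertices is preserved. Once this bookkeeping is verified, the cost improvement is immediate and the theorem follows.
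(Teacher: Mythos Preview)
Your argument is correct and is the standard ``reverse the middle segment'' proof: if an optimal closed walk uses the arc $(u,v)$ twice, splicing out the two occurrences and reversing the segment in between yields a strictly shorter closed walk through the same vertex set, contradicting optimality. The only point worth making explicit is that the middle piece $M$ is never empty (if $j=i+1$ one would need $u=v$), so the concatenation is always well defined; and that positivity of $d_{(u,v)}$ is guaranteed because edges in $E$ join distinct neighbouring locations.

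As for comparison with the paper: note that the paper does \emph{not} supply its own proof of this theorem. It quotes the statement from \citet{Valle2016} and uses it to justify restricting the arc variables to binary in the JOBPRP formulation. Your reversal argument is essentially the proof one finds in that reference (and more generally the classical argument that on a symmetric digraph an optimal Steiner closed walk never repeats an arc), so there is no methodological divergence to discuss.
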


When multiple pickers and pick capacity are considered, we should assign orders to pickers. Assume \hl{that} a picker has a capacity of $B$ units, the set of all orders is denoted by $O$, and any order $o \in O$ has a capacity of $b_o$ units. Two or more customer orders can be batched together if the total capacity of customer orders assigned to a picker does not exceed its available capacity. After the order batching process, $O$ will be partitioned into several subsets, and each subset of orders will be assigned to one picker. Each order $o$ contains a subset of picking locations $L_o \subset V$. Let $O_t \subset O$ denote the subset of orders assigned to picker $t$, then picker $t$ must visit all nodes in $\cup_{o\in O_t}L_o$ to collect a set of items for orders in $O_t$. 

Now we present a basic formulation of the JOBPRP based on formulation~(\ref{sp0})-(\ref{sp6}) and Theorem~\ref{onlyonce}. For any node set $S\subset V$, let $\delta^{+}(S)=\{(u,v)\in \tilde{E}: u\in S,v\notin S\}$ and $\delta^{-}(S)=\{(u,v)\in \tilde{E}: u\notin S,v\in S\}$. For a single vertex $v\in V$, let $\delta^{+}(v) = \delta^{+}(\{ v \})$ and $\delta^{-}(v) = \delta^{-}(\{ v \})$. Let $T$ be the number of available pickers, and let $\mathcal{T}=\{1,2,...,T\}$. We introduce \hl{the} binary variables $x_{tuv}$ to indicate whether arc $(u,v)$ is traversed by picker $t$, $y_{tv}$ to indicate whether vertex $v$ is visited by picker $t$ and $z_{ot}$ to indicate whether picker $t$ picks order $o$. The basic formulation is formally given as follows.
\begin{alignat}{2} 
\label{bs0} \quad \min \quad &\sum_{t=1}^{T} \sum_{(u,v)\in \tilde{E}} d_{uv} x_{tuv} & \qquad & \\
\label{bs1} \mbox {s.t.} \quad & \sum_{(s,v)\in \delta^{+}(s)}x_{tsv} \geq 1, & \qquad & \forall t\in \mathcal{T} \\
\label{bs2} \quad & \sum_{(u,v)\in \delta^{+}(u)}x_{tuv} \geq z_{ot}, &  \qquad & \forall t\in \mathcal{T}, o \in O, u \in L_o \\
\label{bs3} \quad & y_{tu} \geq x_{tuv}, & \qquad & \forall t\in \mathcal{T}, u\in V\backslash \{ s \}, (u,v) \in \delta^+(u) \\
\label{bs4} \quad & \sum_{(u,v) \in \delta^+(S)} x_{tuv} \geq y_{tu_0}, & \qquad & \forall t\in \mathcal{T},  S \subset V\backslash \{ s \}, |S|\geq 2, u_0 \in S \\
\label{bs5} \quad & \sum_{(v,u) \in \delta^+(v)} x_{tvu} = \sum_{(u,v) \in \delta^-(v)} x_{tuv}, & \qquad & \forall t\in \mathcal{T}, v\in V \\
\label{bs6} \quad & \sum_{t \in \mathcal{T}} z_{ot} = 1, & \qquad & \forall o \in O \\
\label{bs7} \quad & \sum_{o \in O} b_o z_{ot} \leq B, & \qquad & \forall t \in \mathcal{T} \\
\label{bs8} \qquad & x_{tuv} \in \{ 0,1 \}, & \qquad &  \forall t \in \mathcal{T}, (u,v) \in \tilde{E}\\
\label{bs9} \qquad & y_{tv} \in \{ 0,1 \}, & \qquad &  \forall t \in \mathcal{T}, v \in V\\
\label{bs10} \qquad & z_{ot} \in \{ 0,1 \}, & \qquad &  \forall t \in \mathcal{T}, o \in O
\end{alignat}

\hl{Constraints~(\ref{bs1})-(\ref{bs5}) are similar to constraints~(\ref{sp1})-(\ref{sp4}); each of them describes a picker routing process.} Constraints~(\ref{bs6}) ensure that each order is assigned to precisely one picker. Constraints~(\ref{bs7}) \hl{are} capacity constraints. Constraints~(\ref{bs6})-(\ref{bs7}) describe the order batching process. \hl{The feasible region of the basic formulation is denoted by $P_{basic}$, i.e.,} $P_{basic} = \{ (x,y,z): constraints~(\ref{bs1})-(\ref{bs10})\}$. \hl{This formulations is non-compact because it involves exponentially many constraints~(\ref{bs4}) to enforce connectivity. A branch-and-cut algorithm that separates these constraints should be implemented when this formulation is adopted.}

\hl{The basic formulation is a slightly different version of the original formulation \citep{Valle2016}. The original formulation assumes that it is unnecessary for a picker to depart from the origin when no order is assigned to it, but constraints~(\ref{bs1}) force all pickers to depart from the origin. The main reasons for this assumption are the following:
}
\begin{enumerate}
    \item We do not tackle the workforce level planning problem in this paper, and this assumption helps in simplifying the formulation.
    \item This is not a critical assumption because constraints~(\ref{bs1}) can easily be modified to deal with the previous assumption.
\end{enumerate}
\hl{In the rest of the paper, we simply let $|\mathcal{T}|$ be the necessary number of pickers to carry all products, which can be obtained by solving a bin-packing problem.
}

\section{Improved formulations}\label{imp_formulation}

In this section, we propose two improved formulations $P_G$ and $P_F$ for the JOBPRP by reformulating the connectivity constraints~(\ref{bs4}). \hl{The key idea of reformulation is to enforce connectivity using the subaisle cuts \citep{VALLE2017817}. To improve readability, we present two tables that summarize the most important notations and formulations in the appendix.} 

\hl{We first introduce the subaisle cuts and discuss the relationships between these cuts and the basic formulation.} Let the number of subaisles be $W_{sub}$. Subaisles are indexed by the elements in set $[W_{sub}]=\{1,2,...,W_{sub}\}$. Let the set of picking locations within subaisle $i$ be $V_{sub}(i)$. The northern artificial location is denoted by $f(i)$ and the southern artificial location is denoted by $l(i)$. For any picking location $v \in V_{sub}(i)$, the adjacent northern location is denoted by $n(v)$ and the adjacent southern location is denoted by $s(v)$. $s(f(i))$ and $n(l(i))$ are defined similarly. By using auxiliary binary variables $\alpha$ and $\beta$, we consider the following feasible region $P_{sub}$ \hl{containing only subaisle cuts}:
\begin{alignat}{2} 
\label{sub1} \quad & \alpha_{tv} \geq \alpha_{ts(v)}, &  \qquad & \forall t\in \mathcal{T}, i\in [W_{sub}], v \in V_{sub}(i) \backslash \{n(l(i))\} \\
\label{sub2} \quad & x_{tn(v)v} \geq \alpha_{tv}, &  \qquad & \forall t\in \mathcal{T}, i\in [W_{sub}], v \in V_{sub}(i)\\
\label{sub3} \quad & \beta_{tv} \geq \beta_{tn(v)}, &  \qquad & \forall t\in \mathcal{T}, i\in [W_{sub}], v \in V_{sub}(i) \backslash \{s(f(i))\} \\
\label{sub4} \quad & x_{ts(v)v} \geq \beta_{tv}, &  \qquad & \forall t\in \mathcal{T}, i\in [W_{sub}], v \in V_{sub}(i)\\
\label{sub5} \quad & \alpha_{tv} + \beta_{tv} \geq z_{ot}, &  \qquad & \forall t\in \mathcal{T}, o\in O, v\in L_o \\
\label{sub6} \quad & \alpha_{tv} \in \{0,1\}, &  \qquad & \forall t\in \mathcal{T}, i\in [W_{sub}], v \in V_{sub}(i) \\
\label{sub7} \quad & \beta_{tv} \in \{0,1\}, &  \qquad & \forall t\in \mathcal{T}, i\in [W_{sub}], v \in V_{sub}(i) 
\end{alignat}

\hl{Those additional constraints work as follows}: we suppose \hl{that} there exists a feasible solution $(x^*,\alpha^*,\beta^*) \in P_{sub}$. For any picking location $v$ in subaisle $i$, if $\alpha^*_{tv}=1$ then there exists a \hl{straight} path connecting \hl{$f(i)$ and $v$} in walk $t$, and \hl{the same holds} when $\beta^*_{tv}=1$. \hl{If} walk $t$ pass through picking location $v$, \hl{then} there must exist a path connecting $v$ and the northern or southern artificial location. Therefore, constraints~(\ref{sub5}) are valid constraints. Remark that $\alpha_{tv} + \beta_{tv}$ may not be $2$ \hl{when} there exist two paths in walk $t$, one of them connects picking location $v$ and the northern artificial location, and others connect $v$ and the southern artificial location. \hl{In other words}, there is no surjection from $P_{basic}$ to $P_A=\{(x,y,z,\alpha,\beta):(x,y,z)\in P_{basic},(x,z,\alpha,\beta)\in P_{sub}\}$.

\hl{Subaisle cuts can be regarded as a class of connectivity constraints Therefore, those cuts can partially replace constraints~(\ref{bs4}). To illustrate the relationship between $P_{sub}$ and constraints~(\ref{bs4}), consider the following example.
}

\begin{example}
Consider the warehouse illustrated in Figure~\ref{feasol}. Suppose that there is an order $o$ with $L_o = \{v_{22},v_{31}\}$. We provide feasible solutions for various relaxations of $P_A$ in Figure~\ref{feasol}:
\begin{enumerate}
    \item When we remove all connectivity constraints, a feasible solution could only consist of several cycles. 
    \item When we remove constraints~(\ref{bs4}), $v_{22}$ and $v_{31}$ are forced to be connected with neighboring artificial locations.
    \item When all connectivity constraints are used, we are able to generate a Eulerian tour from a feasible solution.
\end{enumerate}
\end{example}

\begin{figure}
\centering
\subfigure[warehouse layout]{%
\resizebox*{5.5cm}{!}{\includegraphics{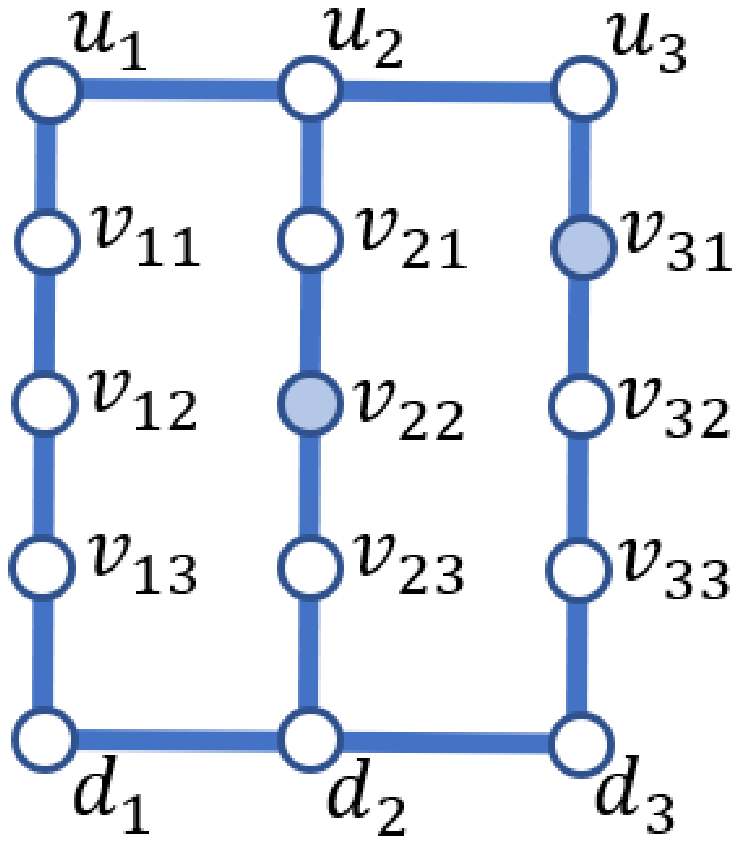}}}
\subfigure[$P_A$ without constraints~(\ref{bs4}) and $P_{sub}$]{%
\resizebox*{5.5cm}{!}{\includegraphics{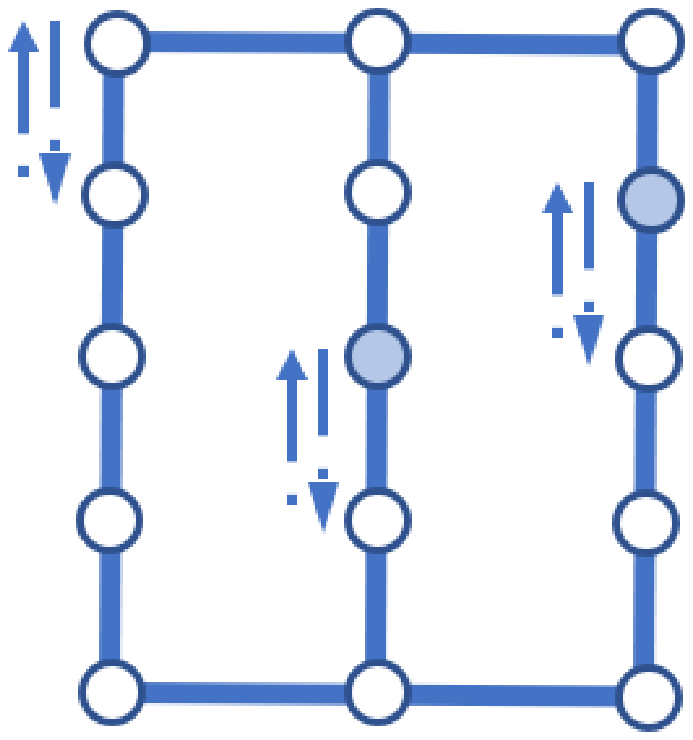}}}
\subfigure[$P_A$ without constraints~(\ref{bs4})]{%
\resizebox*{5.5cm}{!}{\includegraphics{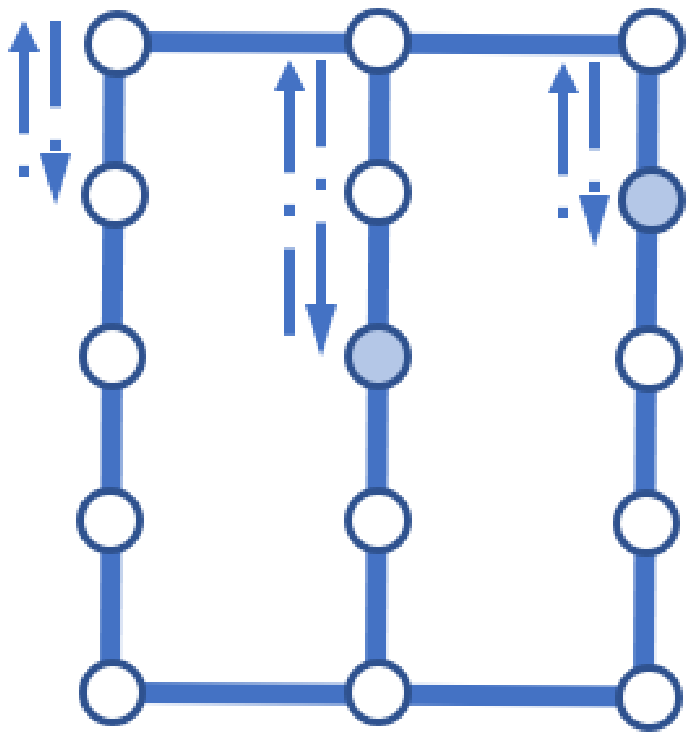}}}
\subfigure[$P_A$]{%
\resizebox*{5cm}{!}{\includegraphics{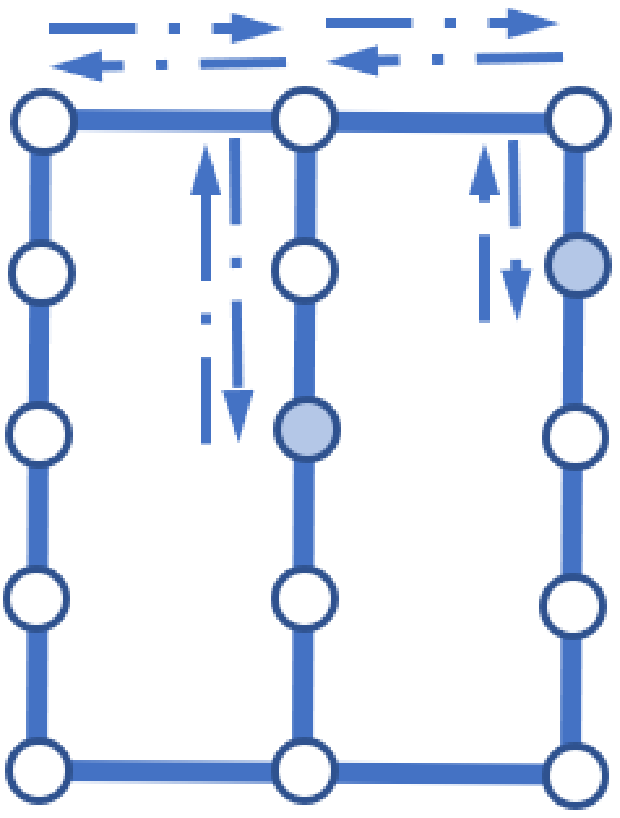}}}
\caption{Warehouse layout and feasible solutions for different formulations} \label{feasol}
\end{figure}

\hl{It is possible to observe that we only need to focus on the connectivity of the graph induced by artificial locations when subaisle cuts have been added to the formulation (Figure~\ref{feasol}(c)). In the remainder of this section, we reconstruct constraints~(\ref{bs4}) using this observation.} For any artificial location $v$, we let $Q_W(v)$ be the adjacent artificial location $u$ lying \hl{to} the west of $v$; we let $Q_E(v)$ be the adjacent artificial location $u$ lying \hl{to} the east of $v$. For any subaisle $i$, we let $Q_N(l(i))=f(i)$ and $Q_S(f(i))=l(i)$. We illustrate all \hl{of} these concepts in the reduced graph in Figure~\ref{subreduce}. \hl{Let $\tilde{E}'$ be the arc set of the reduced graph, that is, $\tilde{E}$ is a set of edges connecting neighboring artificial locations while ignoring picking locations within subaisles.} For each arc $(u,v)\in \tilde{E}'$, we introduce an auxiliary binary variable $\gamma_{tuv}$. For any node set $S\subset V_I$, let $\eta^{+}(S)=\{(u,v)\in \tilde{E}': u\in S,v\notin S\}$ and $\eta^{-}(S)=\{(u,v)\in \tilde{E}': u\notin S,v\in S\}$. \hl{We have} the following feasible region $P_g$:
\begin{alignat}{2} 
\label{impf1} \quad & \sum_{(s,v)\in \delta^{+}(s)}x_{tsv} \geq 1, & \qquad & \forall t\in \mathcal{T} \\
\label{impf2} \quad & \sum_{(u,v)\in \delta^{+}(u)}x_{tuv} \geq z_{ot}, &  \qquad & \forall t\in \mathcal{T}, o \in O, u \in L_o \\
\label{impf3} \quad & y_{tu} \geq x_{tuv}, & \qquad & \forall t\in \mathcal{T}, u\in V_I\backslash \{ s \}, (u,v) \in \delta^+(u) \\
\label{impf4} \quad & x_{tvQ_W(v)} = \gamma_{tvQ_W(v)}, & \qquad & \forall t\in \mathcal{T}, v\in V_I, (v,Q_W(v))\in \tilde{E}'\\
\label{impf5} \quad & x_{tvQ_E(v)} = \gamma_{tvQ_E(v)}, & \qquad & \forall t\in \mathcal{T}, v\in V_I, (v,Q_E(v))\in \tilde{E}' \\
\label{impf6.5} \quad & \alpha_{tn(l(i))} \geq \gamma_{tf(i)l(i)}, & \qquad & \forall t\in \mathcal{T}, i\in [W_{sub}]\\
\label{impf6} \quad & x_{tn(l(i))l(i)} \geq \gamma_{tf(i)l(i)}, & \qquad & \forall t\in \mathcal{T}, i\in [W_{sub}]\\
\label{impf7.5} \quad & \beta_{ts(f(i))} \geq
\gamma_{tl(i)f(i)}, & \qquad & \forall t \in \mathcal{T}, i \in [W_{sub}]\\
\label{impf7} \quad & x_{ts(f(i))f(i)} \geq \gamma_{tl(i)f(i)}, & \qquad & \forall t\in \mathcal{T}, i\in [W_{sub}]\\
\label{impf8} \quad & \sum_{(u,v) \in \eta^+(S)} \gamma_{tuv} \geq y_{tu_0}, & \qquad & \forall t\in \mathcal{T}, S \subset V_I\backslash \{ s \}, |S|\geq 2, u_0\in S \\
\label{impf9} \quad & \sum_{(v,u) \in \delta^+(v)} x_{tvu} = \sum_{(u,v) \in \delta^-(v)} x_{tuv}, & \qquad & \forall t\in \mathcal{T}, v\in V \\
\label{impf10} \quad & \sum_{t \in \mathcal{T}} z_{ot} = 1, & \qquad & \forall o \in O \\
\label{impf11} \quad & \sum_{o \in O} b_o z_{ot} \leq B, & \qquad & \forall t \in \mathcal{T} \\
\label{impf12} \qquad & x_{tuv} \in \{ 0,1 \}, & \qquad &  \forall t \in \mathcal{T}, (u,v) \in \tilde{E}\\
\label{impf13} \qquad & y_{tv} \in \{ 0,1 \}, & \qquad &  \forall t \in \mathcal{T}, v \in V\\
\label{impf14} \qquad & z_{ot} \in \{ 0,1 \}, & \qquad &  \forall t \in \mathcal{T}, o \in O\\
\label{impf15} \qquad & \gamma_{tuv} \in \{ 0,1 \}, & \qquad &  \forall t \in \mathcal{T}, (u,v) \in \tilde{E}'
\end{alignat}

\begin{figure}[htbp]
\centering
\resizebox*{8cm}{!}{\includegraphics{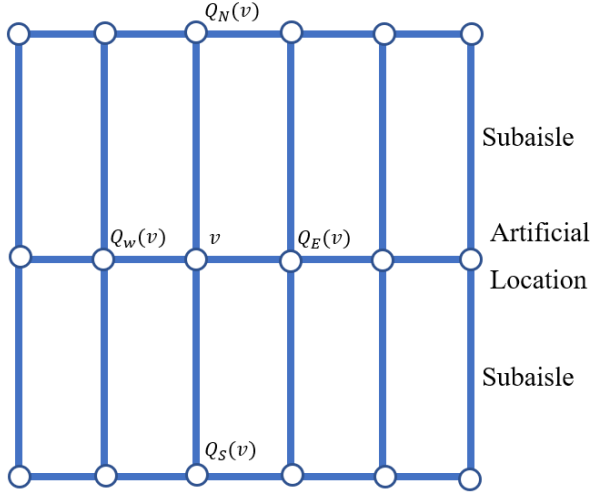}}
\hspace{5pt}
\caption{The related reduced graph} \label{subreduce}
\end{figure}

\hl{The auxiliary binary variable $\gamma_{tuv}$ tries to indicate whether $(u,v)\in \tilde{E}'$ is traversed by walk $t$. Consider a feasible solution $(x^*,y^*,z^*,\alpha^*,\beta^*,\gamma^*)\in P_g$. If $\gamma^*_{tuv}=1$, artificial locations $u$,$v$ are in the same connected component in walk $t$ according to constraints~(\ref{impf4})-(\ref{impf7}). If at most the subaisle between $u$ and $v$ is partially traversed in the direction $u\rightarrow v$ by walk $t$, then we have $\gamma^*_{tuv}=0$. Thus,} it suffices to add the \hl{connectivity} constraints~(\ref{impf8}) rather than constraints~(\ref{bs4}) to ensure that all artificial locations are in the same connected component. \hl{Let $f(x)=\sum_{t=1}^{T} \sum_{(u,v)\in \tilde{E}} d_{uv} x_{tuv}$ and $P_G=\{(x,y,z,\alpha,\beta,\gamma):(x,y,z,\gamma)\in P_g, (x,z,\alpha,\beta)\in P_{sub}\}$. As shown in the following theorem, it suffices to compute an optimal solution that satisfies $P_G$.}

\begin{theorem}\label{PG}
$min\{f(x):(x,y,z,\alpha,\beta,\gamma)\in P_G\} = 
min\{f(x):(x,y,z)\in P_{basic}\}$
\end{theorem}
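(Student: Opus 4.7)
The plan is to prove the equality of the two minima by exhibiting explicit maps between feasible solutions of the two formulations that preserve the $x$-vector (and hence the common objective $f(x)$). Since any feasible $(x,y,z)\in P_{basic}$ lifts to some $(x,y,z,\alpha,\beta,\gamma)\in P_G$ with the same $x$, and any feasible $(x,y,z,\alpha,\beta,\gamma)\in P_G$ projects to some $(x,y',z)\in P_{basic}$ with the same $x$, both inequalities between the minima follow.

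For the lifting direction ($\min_{P_G} f \leq \min_{P_{basic}} f$), I take any $(x,y,z)\in P_{basic}$ and define $(\alpha,\beta,\gamma)$ as follows. For each picker $t$, subaisle $i$, and picking location $v\in V_{sub}(i)$, set $\alpha_{tv}=1$ iff walk $t$ traverses the entire straight chain of arcs from $f(i)$ southward down to $v$, and $\beta_{tv}=1$ iff walk $t$ traverses the corresponding chain from $l(i)$ northward up to $v$. For each arc in the reduced graph $\tilde{E}'$, put $\gamma_{tuv}=x_{tuv}$ on horizontal arcs, and set $\gamma_{tf(i)l(i)}=1$ (resp.\ $\gamma_{tl(i)f(i)}=1$) iff the entire subaisle $i$ is traversed southward (resp.\ northward). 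The subaisle-cut constraints (\ref{sub1})--(\ref{sub5}) and the $\gamma$-coupling constraints (\ref{impf4})--(\ref{impf7}) then hold by construction. The reduced-graph connectivity constraint (\ref{impf8}) for a given $S\subseteq V_I\setminus\{s\}$ is obtained from the stronger original constraint (\ref{bs4}) applied to the extended set $S\cup\{v\in V_{sub}(i):f(i),l(i)\in S\}$; the resulting traversed arc of $\delta^+$ can be shown to correspond, via the definitions of $\gamma$, to a reduced-graph arc in $\eta^+(S)$ with $\gamma=1$.

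For the projection direction ($\min_{P_G} f \geq \min_{P_{basic}} f$), I take any $(x,y,z,\alpha,\beta,\gamma)\in P_G$, set $y'_{tv}:=\max_{(v,u)\in\delta^+(v)}x_{tvu}$, and verify that $(x,y',z)\in P_{basic}$. All constraints of $P_{basic}$ except (\ref{bs4}) transfer directly. For (\ref{bs4}), I argue by contradiction: suppose $t$, $S\subseteq V\setminus\{s\}$ with $|S|\ge 2$ and $u_0\in S$ with $y'_{tu_0}=1$ admit no traversed arc of $\delta^+(S)$. Let $H_t$ be the subgraph of $G$ induced by arcs with $x_{t\cdot}=1$ and let $C$ be the connected component of $H_t$ containing $u_0$; then $C\subseteq S$ and $s\notin C$. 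Since subaisles are path graphs (hence acyclic), $C$ must contain at least one artificial location $a_1$, which by (\ref{impf3}) satisfies $y_{ta_1}=1$. Applying (\ref{impf8}) to $S':=(C\cap V_I)\cup\{a_2\}$ for an auxiliary $a_2\in V_I\setminus(\{s\}\cup C)$ produces $(u,v)\in\eta^+(S')$ with $\gamma_{tuv}=1$; constraints (\ref{impf4})--(\ref{impf7}) together with the $\alpha,\beta$-propagation (\ref{sub1})--(\ref{sub4}) force either a horizontal $x_{tuv}=1$ in $\delta^+(C)$ or a fully traversed subaisle between $C$ and its complement, yielding a traversed arc with one endpoint in $C$ and one outside, the desired contradiction.

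The main obstacle is the projection direction, specifically ruling out the pathological ``subaisle-detour'' components where $|C\cap V_I|\le 1$, since (\ref{impf8}) does not apply to such a small set directly. The remedy is to augment $C\cap V_I$ by an auxiliary vertex $a_2$ and to leverage the subaisle cuts of $P_{sub}$ carefully; I also have to be mindful that $y_{tv}$ for picking locations $v\in V_L$ is unconstrained in $P_G$, which is why the redefinition $y'_{tv}=\max_{(v,u)\in\delta^+(v)}x_{tvu}$ is needed to make (\ref{bs3}) hold in the projected solution. Once these subtleties are addressed, the remaining verifications, namely that each horizontal $\gamma=1$ equals a traversed $x$-arc and each vertical $\gamma=1$ corresponds to a fully traversed subaisle, are routine.
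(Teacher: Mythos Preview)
Your lifting direction ($\leq$) is essentially the paper's construction and is fine.

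The projection direction ($\geq$) has a genuine gap. You claim that for any feasible $(x,y,z,\alpha,\beta,\gamma)\in P_G$, the triple $(x,y',z)$ with $y'_{tv}:=\max_{(v,u)\in\delta^+(v)}x_{tvu}$ lies in $P_{basic}$. This is false. Take any feasible point of $P_G$ and, for some picker $t$, set $x_{tv,s(v)}=x_{ts(v),v}=1$ for two adjacent interior picking locations $v,s(v)$ in an otherwise unused subaisle. All constraints of $P_G$ still hold (flow conservation is preserved; (\ref{impf3}) and (\ref{impf8}) concern only artificial locations; the subaisle cuts (\ref{sub1})--(\ref{sub5}) are unaffected since $\alpha,\beta$ need not change). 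But now $y'_{tv}=1$, and (\ref{bs4}) applied to $S=\{v,s(v)\}$ demands an outgoing arc that does not exist. The specific step that fails is your assertion ``since subaisles are path graphs (hence acyclic), $C$ must contain at least one artificial location'': the underlying undirected subaisle is indeed a path, but the directed graph $\tilde G$ allows $2$-cycles (and longer back-and-forth walks) entirely within the picking locations of a subaisle, so a flow-balanced component $C$ with $C\cap V_I=\emptyset$ is perfectly possible.

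The paper does not attempt to preserve $x$. For the $\geq$ direction it takes an \emph{optimal} point of $P_G$, lets $H_0$ be the connected component of the $x$-graph containing the origin, and argues that every required picking location (i.e.\ every $v\in L_o$ with $z_{ot}=1$) lies in $H_0$: the subaisle cuts force such a $v$ to be $x$-connected to $f(i)$ or $l(i)$, and then (\ref{impf8}) together with (\ref{impf4})--(\ref{impf7}) forces that artificial endpoint to be $x$-connected to $s$. The Eulerian subgraph $H_0$ thus yields a $P_{basic}$-feasible tour of cost at most $f(x^*)$, which is all that is needed. Your argument can be repaired by restricting to an optimal $P_G$ solution (where superfluous components cannot occur) or, more directly, by adopting this ``restrict to $H_0$'' construction instead of insisting on the same $x$.
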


\begin{proof}
For some $(x^*,y^*,z^*)\in P_{basic}$, $(\alpha^*,\beta^*,\gamma^*)$ are defined as follows:
\begin{enumerate}
    \item For any picking location $v$ within subaisle $i$, we set $\alpha_{tv}^*=1$ if $x_{tus(u)}^*=1$ for $u\in \{f(i),s(f(i)),s(s(f(i))),...,n(v)\}$; otherwise, we set $\alpha_{tv}^*=0$.
    \item For any picking location $v$ within subaisle $i$, we set $\beta_{tv}^*=1$ if $x_{tun(u)}^*=1$ for $u\in \{l(i),n(l(i)),n(n(l(i))),...,s(v)\}$; otherwise, we set $\beta_{tv}^*=0$.
    \item For any aritificial location $v$, we set $\gamma^*_{tvQ_{W}(v)} = x^*_{tvQ_{W}(v)}$ and $\gamma^*_{tvQ_{E}(v)} = x^*_{tvQ_{E}(v)}$. 
    We set $\gamma^*_{tvQ_{N}(v)}=1$ if $v=l(i)$ and $x_{tun(u)}^*=1$ for $u \in \{l(i),n(l(i)),...,s(f(i))\}$; otherwise, we set $\gamma^*_{tvQ_{N}(v)}=0$. We define $\gamma^*_{tvQ_{S}(v)}$ similarly.
\end{enumerate}
It is easy to check that  $(x^*,y^*,z^*,\alpha^*,\beta^*,\gamma^*)\in P_G$, therefore the following holds:
$$min\{f(x):(x,y,z,\alpha,\beta,\gamma)\in P_G\} \leq 
min\{f(x):(x,y,z)\in P_{basic}\}$$ 
Let us now consider an optimal solution $(x^*,y^*,z^*,\alpha^*,\beta^*,\gamma^*)\in P_{G}$. For picker $t\in \mathcal{T}$, the graph indicated by $x^*$ is denoted by $H$. The connected component of $H$ that contains the origin is denoted by $H_0$. We are able to generate a feasible picking tour from $H_0$ because any picking location $v$, which belongs to a location set $L_o$ with $z^*_{ot}=1$, also belongs to the vertex set of $H_0$. This gives us the following result.
$$min\{f(x):(x,y,z,\alpha,\beta,\gamma)\in P_G\} \geq
min\{f(x):(x,y,z)\in P_{basic}\}$$
\end{proof}

\hl{Similar to $P_{basic}$, the improved formulation $P_G$ involves exponentially many constraints to enforce connectivity. The connectivity constraints should be added} to the model via a separation procedure when using a branch-and-cut algorithm. For any candidate integral solution in the branch-and-cut tree, we verify via a depth-first search whether the graph is connected and \hl{we} add \hl{connectivity} constraints for every connected component except the one containing the origin when the graph is not connected. \hl{Note that because} a subset of constraints~(\ref{bs4}) \hl{is} actually dominated by a single constraint~(\ref{impf8}) according to constraints~(\ref{impf4})-(\ref{impf7}), constraints~(\ref{impf8}) \hl{seem to} be more suitable for a branch-and-cut procedure.

We also provide a compact formulation of this problem, i.e., a formulation with a polynomial number of variables and constraints. The main trick is introducing an auxiliary multicommodity flow problem. Assume that there is a \hl{salesman} in each selected artificial location. The \hl{salesmen} must determine feasible \hl{paths} to the origin. Once all salesmen can get to the origin, the origin and all selected artificial locations are in the same connected component. We introduce flow variables $\sigma^{v_0}_{tuv}$ to indicate the amount of commodity from artificial location $v_0$ passing through arc $(u,v)\in \tilde{E}'$ in walk $t$. $P_f$ is given by constraints~(\ref{impf1})-(\ref{impf7}), (\ref{impf9})-(\ref{impf15}) and the following:
\begin{alignat}{2} 
\label{impcf1} \quad & \sum_{(v_0,v)\in \eta^{+}(v_0)}\sigma^{v_0}_{tv_0v} - \sum_{(v,v_0)\in \eta^{-}(v_0)}\sigma^{v_0}_{tvv_0} = y_{tv_0}, &  \qquad & \forall t\in \mathcal{T}, v_0\in V_I \\
\label{impcf2} \quad & \sum_{(u,v)\in \eta^{+}(u)}\sigma^{v_0}_{tuv} -  \sum_{(v,u)\in \eta^{-}(u)}\sigma^{v_0}_{tvu} = 0, &  \qquad & \forall t\in \mathcal{T}, v_0\in V_I, u\in V_I\backslash \{s,v_0 \} \\
\label{impcf3} \quad & \sum_{(s,v)\in \eta^{+}(u)}\sigma^{v_0}_{tsv} -  \sum_{(v,s)\in \eta^{-}(s)}\sigma^{v_0}_{tvs} = -y_{tv_0}, &  \qquad & \forall t\in \mathcal{T}, v_0\in V_I \\
\label{impcf4} \quad & 0 \leq \sigma^{v_0}_{tuv} \leq \gamma_{tuv}, &  \qquad & \forall t\in \mathcal{T}, v_0\in V_I, (u,v) \in \tilde{E}'
\end{alignat}

\hl{Let $P_F=\{(x,y,z,\alpha,\beta,\gamma,\sigma):(x,y,z,\gamma,\sigma)\in P_f, (x,z,\alpha,\beta)\in P_{sub}\}$. We immediately obtain the following.
}
\begin{theorem}\label{PF}
$min\{f(x):(x,y,z,\alpha,\beta,\gamma,\sigma)\in P_F\}=min\{f(x):(x,y,z)\in P_{basic}\}$
\end{theorem}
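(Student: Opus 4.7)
The strategy is to piggyback on Theorem~\ref{PG} by establishing the intermediate identity $\min\{f(x):(x,y,z,\alpha,\beta,\gamma,\sigma)\in P_F\} = \min\{f(x):(x,y,z,\alpha,\beta,\gamma)\in P_G\}$; chaining this with Theorem~\ref{PG} then gives the desired equality. The key observation is that in the reduced graph $(V_I,\tilde{E}')$ with arc capacities $\gamma_{tuv}$, constraints~(\ref{impcf1})--(\ref{impcf4}) encode, for each picker $t$ and each artificial location $v_0$, the existence of a $v_0$-to-$s$ flow of value $y_{tv_0}$; standard max-flow/min-cut duality should then link these flow constraints to the cut inequalities~(\ref{impf8}) used to define $P_G$.

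For $\min P_F\leq \min P_{basic}$, I would take any $(x^*,y^*,z^*)\in P_{basic}$ with $y^*$ chosen minimally (so $y^*_{tv}=1$ iff $v$ is visited by walk $t$) and extend it to a feasible point of $P_F$. The variables $(\alpha^*,\beta^*,\gamma^*)$ are defined exactly as in the proof of Theorem~\ref{PG}, which already yields membership in $P_G$. To produce $\sigma^*$, I would observe that for every picker $t$ and every artificial $v_0$ with $y^*_{tv_0}=1$, the picking tour produced by $x^*$ contains a $v_0$-to-$s$ subwalk which, after projection onto $V_I$ and removal of partial subaisle detours that return to the same artificial node, becomes a walk in the reduced graph whose arcs lie in the support of $\gamma^*$ (cross-aisle arcs by~(\ref{impf4})--(\ref{impf5}); full subaisle traversals by construction). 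Routing one unit of commodity $v_0$ along such a walk, and setting $\sigma^{v_0}_t\equiv 0$ whenever $y^*_{tv_0}=0$, gives a $\sigma^*$ satisfying flow conservation and the capacity bound.

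For the reverse direction $\min P_F\geq \min P_{basic}$, I would show that every $(x^*,y^*,z^*,\alpha^*,\beta^*,\gamma^*,\sigma^*)\in P_F$ projects to an element of $P_G$ with the same $x^*$. Fix a picker $t$, a set $S\subset V_I\setminus\{s\}$ with $|S|\geq 2$, and $u_0\in S$; the inequality~(\ref{impf8}) is trivial if $y^*_{tu_0}=0$, so assume $y^*_{tu_0}=1$. Summing the conservation equations~(\ref{impcf1})--(\ref{impcf2}) for commodity $u_0$ over every vertex of $S$ causes internal arcs to cancel, leaving
\[ \sum_{(u,v)\in \eta^+(S)} \sigma^{u_0}_{tuv} - \sum_{(u,v)\in \eta^-(S)} \sigma^{u_0}_{tuv} = y^*_{tu_0} = 1. \]
Dropping the (nonnegative) second sum and invoking the capacity bound~(\ref{impcf4}) yields $\sum_{(u,v)\in \eta^+(S)} \gamma^*_{tuv}\geq 1 = y^*_{tu_0}$, so all cut inequalities~(\ref{impf8}) hold. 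Thus $(x^*,y^*,z^*,\alpha^*,\beta^*,\gamma^*)\in P_G$, and Theorem~\ref{PG} supplies a matching element of $P_{basic}$.

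The step I expect to be the main obstacle is the forward construction of $\sigma^*$, specifically the claim that the reduced-graph support of $\gamma^*$ always contains a $v_0$-to-$s$ walk whenever $y^*_{tv_0}=1$. This requires a case analysis of how the picking tour can enter and leave an artificial location via subaisle or cross-aisle arcs, together with a careful verification that partial subaisle traversals (which induce $\gamma^*=0$ on the corresponding reduced-graph arc) can always be excised from the tour without breaking the $v_0$-to-$s$ connection in $(V_I,\tilde{E}')$; the reverse direction, in contrast, reduces cleanly to a summation argument that is standard in multicommodity flow formulations.
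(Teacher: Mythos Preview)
Your proposal is correct and largely parallels the paper's argument, with one structural difference worth noting. The paper's forward direction constructs $\sigma^*$ directly: since $(x^*,y^*,z^*)\in P_{basic}$, the multigraph $H$ induced by $x^*$ for picker $t$ is connected, so for any artificial $v_0$ with $y^*_{tv_0}=1$ one simply picks an arbitrary $v_0$-to-$s$ path in $H$ and sets $\sigma^{v_0*}_{tuv}=1$ on the reduced-graph arcs it traverses. The paper's reverse direction does not pass through $P_G$ at all; it just repeats verbatim the connectivity argument from Theorem~\ref{PG} (the component $H_0$ containing the origin already contains every required picking location). Your reverse direction instead shows that the flow constraints~(\ref{impcf1})--(\ref{impcf4}) imply the cut constraints~(\ref{impf8}) via the standard sum-over-$S$ telescoping, thereby projecting $P_F$ into $P_G$ and then invoking Theorem~\ref{PG}. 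This is exactly the mechanism the paper reserves for Theorem~\ref{LPrelax}, so you are effectively folding the two theorems together; that is a perfectly valid and arguably cleaner organization.

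Your anticipated ``main obstacle'' is less severe than you expect. You need no case analysis on partial subaisle detours: once you have already placed $(x^*,y^*,z^*,\alpha^*,\beta^*,\gamma^*)$ in $P_G$ via Theorem~\ref{PG}, the cut inequalities~(\ref{impf8}) hold, and max-flow/min-cut on the reduced graph with integer capacities $\gamma^*$ immediately yields an integral feasible $\sigma^{v_0}_t$ of value $y^*_{tv_0}$ for each commodity. This sidesteps the explicit path-projection argument entirely and is the natural payoff of your own $P_F\leftrightarrow P_G$ framing.
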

\begin{proof}
For some $(x^*,y^*,z^*)\in P_{basic}$, $(\alpha^*,\beta^*,\gamma^*)$ are defined as in the proof of Theorem~\ref{PG} and $\sigma^*$ is defined as follows.

For picker $t\in \mathcal{T}$, the graph indicated by $x^*$ is denoted by $H$. For any artificial location $v_0$ with $y^*_{tv_0}=1$, we consider an arbitrary path in $H$ that goes from $v_0$ to the origin (because $(x^*,y^*,z^*)\in P_{basic}$, such a path exists). We set $\sigma^{v_0*}_{tuv}=1$ if $(u,v)$ is traversed by this path; otherwise, we set $\sigma^{v_0*}_{tuv}=0$. 

It is easy to check that  $(x^*,y^*,z^*,\alpha^*,\beta^*,\gamma^*,\sigma^*)\in P_F$, and therefore the following holds:
$$min\{f(x):(x,y,z,\alpha,\beta,\gamma,\sigma)\in P_F\} \leq 
min\{f(x):(x,y,z)\in P_{basic}\}$$ 

The rest of the proof is similar to that of Theorem~\ref{PG}.
\end{proof}

\hl{We finish this section by showing} the relationship between the LP relaxations of $P_G$ and $P_F$. Formally, let $P_{LP}$ denote the LP relaxation of $P$, \hl{let $proj_{x}(P)$ denote the projection of $P$ onto the $x-space$} and we prove the following theorem:
\begin{theorem}\label{LPrelax}
$proj_{(x,y,z,\alpha,\beta,\gamma)}((P_F)_{LP})=(P_G)_{LP}$
\end{theorem}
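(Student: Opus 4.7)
The plan is to prove the two inclusions separately, exploiting the standard duality between cut-based and flow-based connectivity formulations. The observation underlying the whole argument is that the commodity flows $\sigma^{v_0}$ only interact through the shared capacities $\gamma$, so the construction decouples into one single-commodity max-flow feasibility problem per pair $(t,v_0)$.

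For the $\subseteq$ direction, which is the easy one, I take any $(x,y,z,\alpha,\beta,\gamma,\sigma)\in (P_F)_{LP}$ and show its projection satisfies every defining constraint of $(P_G)_{LP}$. All constraints except the exponentially many cut inequalities~(\ref{impf8}) are literally shared between $P_f$ and $P_g$ and therefore survive the projection. For~(\ref{impf8}) I fix a picker $t$, a cut $S\subset V_I\setminus\{s\}$ with $|S|\geq 2$, and a node $u_0\in S$, then sum the flow-balance equations~(\ref{impcf1})-(\ref{impcf3}) for commodity $v_0:=u_0$ over all nodes in $S$. The internal arcs cancel in pairs, leaving
$$\sum_{(u,v)\in \eta^+(S)} \sigma^{u_0}_{tuv} - \sum_{(v,u)\in \eta^-(S)} \sigma^{u_0}_{tvu} = y_{tu_0}.$$
Dropping the nonnegative incoming sum and invoking $\sigma^{u_0}_{tuv}\le\gamma_{tuv}$ from~(\ref{impcf4}) yields~(\ref{impf8}) directly.

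For the $\supseteq$ direction I take any $(x,y,z,\alpha,\beta,\gamma)\in (P_G)_{LP}$ and manufacture nonnegative flows $\sigma$ so that~(\ref{impcf1})-(\ref{impcf4}) hold; the other constraints of $P_F$ are already in force by hypothesis. For each picker $t$ and each $v_0\in V_I\setminus\{s\}$ I must route $y_{tv_0}$ units from $v_0$ to $s$ in the reduced digraph $(V_I,\tilde{E}')$ with arc capacities $\gamma_{tuv}$. By the LP max-flow/min-cut theorem this flow exists precisely when every cut $S$ with $v_0\in S$, $s\notin S$ satisfies $\sum_{(u,v)\in\eta^+(S)}\gamma_{tuv}\geq y_{tv_0}$. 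For $|S|\geq 2$ this is exactly~(\ref{impf8}) with $u_0:=v_0$; constructing each commodity's flow separately (e.g.\ by the explicit path decomposition furnished by the max-flow proof) and stacking them gives the required $\sigma$.

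The main obstacle will be the singleton cut $S=\{v_0\}$, which is not literally an instance of~(\ref{impf8}) because of the $|S|\geq 2$ restriction. Discharging this case will require exploiting the original-graph flow balance~(\ref{impf9}) at $v_0$, the east-west identifications $x_{tv_0u}=\gamma_{tv_0u}$ in~(\ref{impf4})-(\ref{impf5}), and the subaisle relations~(\ref{impf6.5})-(\ref{impf7}) to argue $\sum_{(v_0,u)\in\eta^+(v_0)}\gamma_{tv_0u}\geq y_{tv_0}$ at each artificial location. Once this subtlety is absorbed, the max-flow/min-cut conclusion is immediate, the per-commodity flows $\sigma^{v_0}$ can be concatenated without interference, and the equality of the two LP projections follows.
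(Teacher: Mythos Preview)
Your overall strategy---reducing both inclusions to the LP max-flow/min-cut equivalence, commodity by commodity---is exactly what the paper does; its proof is a one-line appeal to Theorem~2 of \citet{LETCHFORD201383}, stating that a feasible $\sigma^{u_0}_t$ exists if and only if $\sum_{(u,v)\in\eta^+(S)}\gamma_{tuv}\ge y_{tu_0}$ for every $S\subset V_I\setminus\{s\}$ with $u_0\in S$ (no size restriction), and then stopping.

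You are right to isolate the singleton cut $S=\{v_0\}$ as the crux, but the discharge you sketch cannot succeed. None of (\ref{impf3}), (\ref{impf4})--(\ref{impf7}) or (\ref{impf9}) yields an \emph{upper} bound on $y_{tv_0}$ in terms of the outgoing $\gamma$-capacity at $v_0$: constraint~(\ref{impf3}) is a lower bound $y\ge x$, relations (\ref{impf4})--(\ref{impf7}) bound $\gamma$ from \emph{above} by $x$ and $\alpha,\beta$ (not from below), and the flow balance (\ref{impf9}) is vacuous at a vertex with zero incident $x$-flow. Concretely, in a single-block, two-aisle instance with one order located in subaisle~$1$, take the LP point where subaisle~$1$ is traversed down and up and the picker also makes the horizontal excursion $s\to u_2\to s$, while no arc touches $d_2$. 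Then $\gamma_{td_2u_2}=\gamma_{td_2d_1}=0$ but $\gamma_{td_1s}=\gamma_{tu_2s}=1$ may be chosen, so every cut $S\ni d_2$ with $|S|\ge 2$ has outgoing $\gamma$-weight at least~$1$; setting $y_{td_2}=1$ is then admissible in $(P_G)_{LP}$, yet no feasible $\sigma^{d_2}$ exists because the out-capacity at $d_2$ is zero. Hence $(P_G)_{LP}\subseteq\mathrm{proj}((P_F)_{LP})$ actually fails as written. The paper's proof never confronts the $|S|\ge 2$ restriction in~(\ref{impf8}); the clean repair is to allow $|S|\ge 1$ there, after which your argument (and the paper's) goes through verbatim.
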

\begin{proof}\label{LPrelaxprf}
According to [Theorem 2. \citep{LETCHFORD201383}], there exists a feasible flow $\sigma^{u_0}_{t}$ satisfying constraints~(\ref{impcf1})-(\ref{impcf4}) if and only if
$$
\sum_{(u,v)\in \eta^+(S)}\gamma_{tuv} \geq y_{tu_0}\qquad \forall S\in \{S: u_0\in S, S\subset V_I\backslash \{ s \} \}
$$
Because there should be a feasible flow $\sigma^{u_0}_{t}$ for any $u_0\in V_I\backslash \{s\}$ according to our construction, the proof is ended.
\end{proof}

\section{Additional constraints}\label{additional_cons}

In this section, we introduce two types of additional constraints\hl{, called strengthened connectivity constraints and single traversing constraints. Strengthened connectivity constraints link the routing decisions and the batching decisions where classical connectivity constraints cannot take batching decisions into account. Single traversing constraints mainly focus on the problem property of a rectangular warehouse and can cut off some non-optimal solutions.}

\subsection{Strengthened \hl{connectivity} constraints \hl{and basic cuts}}\label{subsec-scc}

\hl{Considering the relationship between routing decisions and batching decisions, it would be interesting to investigate the constraints that jointly deal with both decisions.}

\hl{We first introduce the concept of the strengthened connectivity constraints.} Let us have a look at \hl{connectivity} constraints~(\ref{bs4}): a constraint of type~(\ref{bs4}) can be generated by fixing a proper vertex set $S$. For any picking location $u_0\in S$, we know that $y_{tu_0}$ is actually dominated by any $z_{ot}$ satisfying $u_0\in L_o$ according to constraints~(\ref{bs2})-(\ref{bs3}). Therefore, we have the following strengthened \hl{connectivity} constraints:
\begin{alignat}{2} 
\label{stenc} \quad & \sum_{(u,v)\in \delta^+(S)}x_{tuv} \geq z_{ot}, &  \qquad & \forall t\in \mathcal{T}, S\subset V\backslash\{s\}, |S|\geq 2, o\in \{o: S\cap L_o\neq \emptyset\}
\end{alignat}

\hl{Notice that there are an exponential number of connectivity constraints~(\ref{bs4}). Thus, constraints~(\ref{stenc}) cannot be added directly to a formulation. To make use of these constraints, we intend to find a family $\mathcal{S}$ of a polynomial number of vertex sets $S$. All constraints induced by $\mathcal{S}$ will be added directly to a formulation.} A carefully selected $\mathcal{S}$ leads to powerful constraints. For instance, \hl{if $S\in \mathcal{S}$ is composed of locations on the right side of some aisle (as shown in Figure~\ref{strencc}), constraints~(\ref{stenc}) are reduced to aisle cuts proposed by \citep{VALLE2017817}.}
\begin{figure}
\centering
\subfigure[Aisle cut]{%
\resizebox*{6cm}{!}{\includegraphics{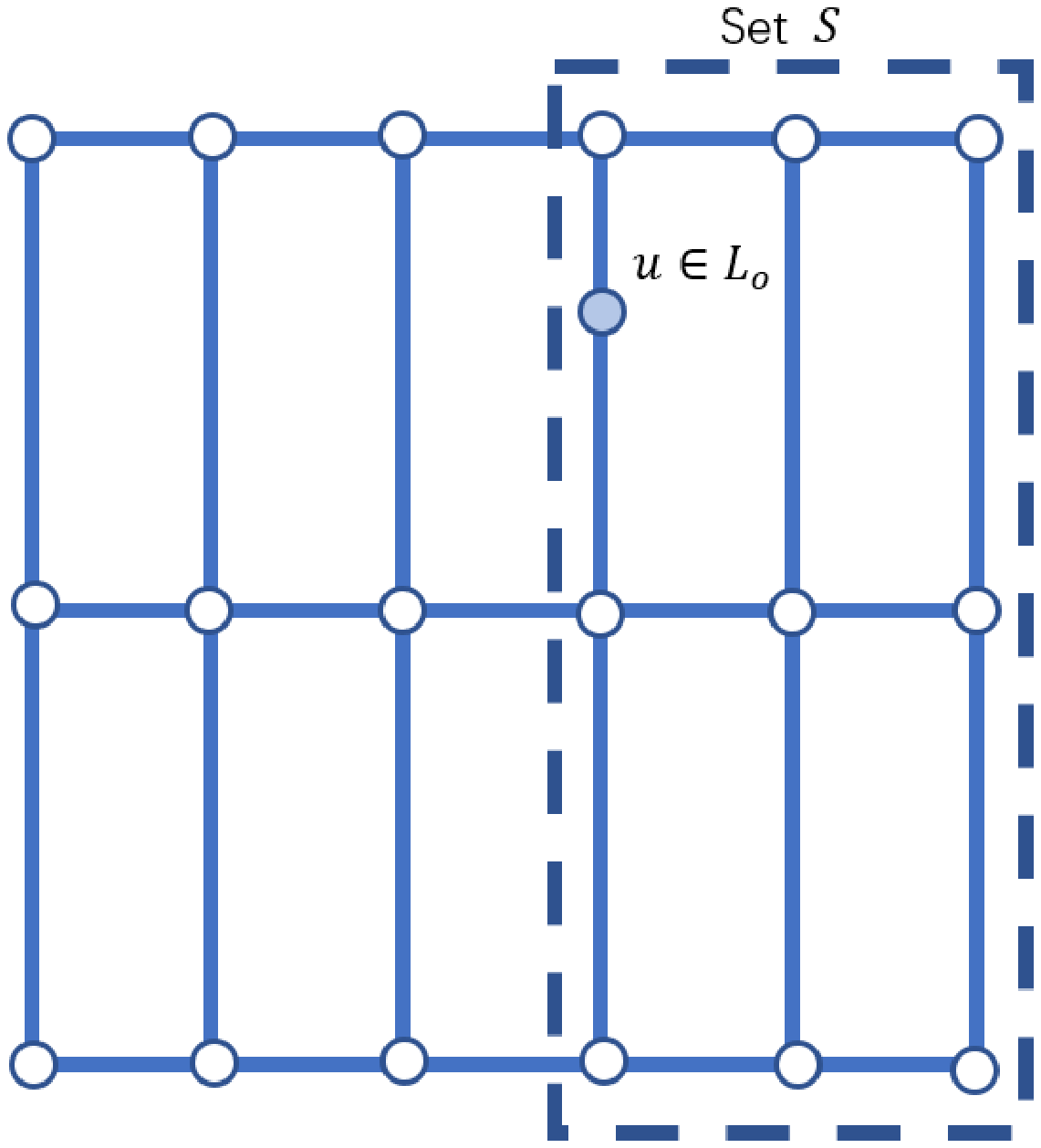}}}
\subfigure[Basic cut]{%
\resizebox*{6cm}{!}{\includegraphics{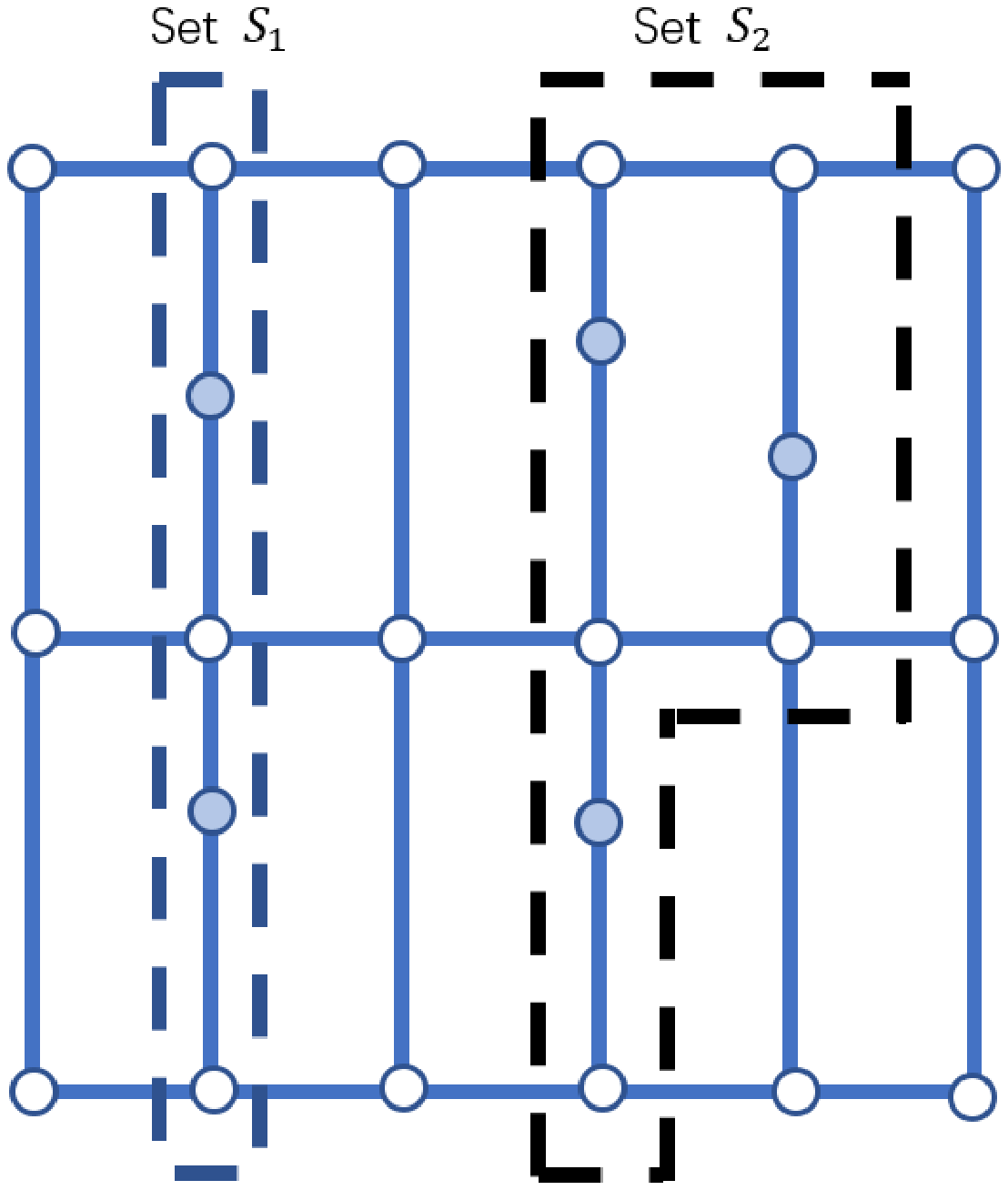}}}
\caption{Strengthened \hl{connectivity} constraints} \label{strencc}
\end{figure}

Here, we present another type of strengthened \hl{connectivity} constraints, which \hl{we call} basic cuts. \hl{$\mathcal{S}$ can be obtained by the following operations. For any order $o$, let $\tilde{E}_0$ denote a set of $e \in \tilde{E}'$ (which represents a subaisle) that contains at least one picking location $u\in L_o$. Let $\tilde{V}_0$ be a set of vertices associated with $\tilde{E}_0$. Then, any $e \in \tilde{E}'$ with both ends in $\tilde{V}_0$ will be added to $\tilde{E}_0$.} By executing a depth-first search algorithm, we can discover all connected components of the graph \hl{$(\tilde{V}_0,\tilde{E}_0)$. Figure~\ref{strencc} shows an example of the two connected components induced by an order. Let $S$ be a vertex set of a connected component that does not contain the origin. All picking locations within the subaisle that has both ends in $S$ will then be added to $S$. Then $S$ will be added to $\mathcal{S}$. We call the strengthened connectivity constraints induced by $\mathcal{S}$ basic cuts.}

\subsection{Single traversing constraints}\label{subsec-stc}

\hl{In this subsection, we derive additional constraints by investigating the graph property of the warehouse. We first restrict our attention to a single-block warehouse. By 'traversing', we mean going from the north artificial vertex to the south artificial vertex or
vice-versa.}

\begin{theorem}\label{singletrip} \hl{For a single-block warehouse,} each subaisle will be traversed at most once by any optimal walk. 
\end{theorem}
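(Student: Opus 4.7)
My plan is to argue by contradiction. Suppose an optimal walk $W$ traverses some subaisle $i$ more than once. By Theorem~\ref{onlyonce}, each directed arc is used at most once, so the two traversals must go in opposite directions (one from $f(i)$ down to $l(i)$ and one from $l(i)$ up to $f(i)$); every interior arc of subaisle $i$ is then used exactly once in each direction and the subaisle contributes twice its length to the cost of $W$. The objective is to exhibit a strictly shorter feasible walk, contradicting the optimality of $W$.

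The central construction replaces the two full traversals by two partial U-turn visits. Let $v_1,\ldots,v_m$ be the picks inside subaisle $i$ ordered north to south at positions $p_1<\cdots<p_m$; I would assume $m\geq 2$, as the cases $m\leq 1$ admit easier versions of the same modification (when $m=0$ the two traversals are pure waste). For any split index $j\in\{1,\ldots,m-1\}$, I form $W'$ by swapping the two full traversals for an upper U-turn from $f(i)$ down to $v_j$ and back, covering $v_1,\ldots,v_j$, together with a lower U-turn from $l(i)$ up to $v_{j+1}$ and back, covering $v_{j+1},\ldots,v_m$. This deletes both directions of the arcs strictly between $v_j$ and $v_{j+1}$ and saves $2(p_{j+1}-p_j)>0$ inside the subaisle.

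I then have to verify that $W'$ is still feasible. All picks in subaisle $i$ are served by construction, and picks outside subaisle $i$ are untouched. Balanced in- and out-degree is preserved at every vertex because arcs are deleted symmetrically in the two directions, so the modified arc multigraph remains Eulerian vertex-wise. The only nontrivial requirement is connectivity. In the typical situation, the edges of $W$ outside subaisle $i$'s interior already keep $f(i)$ and $l(i)$ in a single connected component, either through a top/bottom cross-aisle loop or through some other subaisle used by $W$; in that case the modified multigraph is connected, admits an Euler tour, and yields a strictly shorter walk, contradicting the optimality of $W$.

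The main obstacle is the degenerate case in which the two traversals of $i$ are the sole bridge between a ``top'' closed subwalk containing $f(i)$ and the origin $s$ and a ``bottom'' closed subwalk anchored at $l(i)$, so that a naive U-turn swap disconnects $W'$. I would close this case by a refined rerouting argument that exploits the two-cross-aisle structure of a single-block layout: if the bottom subwalk serves no picks, it is wasted distance and can be deleted in its entirety together with one traversal of $i$; otherwise, the bottom subwalk must already pass through some other subaisle endpoint, and one of the two traversals of $i$ can be merged with that bottom subwalk by rerouting along the bottom and top cross-aisles through an already-visited subaisle, again producing a strictly shorter feasible walk. This geometric case analysis is the delicate part of the proof.
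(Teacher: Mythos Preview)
Your approach differs from the paper's, and the difference shows up exactly at the step you flag as delicate.

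The paper works with the tour as a vertex sequence rather than as an arc multiset. It proves three short lemmas, each recognising a specific subpath pattern and giving a local rewrite of the sequence that produces a strictly shorter tour: (i) an immediate down-and-back $u_i\cdots d_i\cdots u_i$; (ii) a down-traversal entered via $u_{i-1}\to u_i$ and exited via $d_i\to d_{i+1}$, with the reverse traversal occurring elsewhere in the sequence; (iii) the mirror with $d_i\to d_{i-1}$. In case~(ii), for instance, the segment $u_t\cdots u_i\,v_{i1}\cdots d_i\,d_{i+1}\cdots d_k$ is replaced by the top cross-aisle segment $u_t\cdots u_k$, saving one aisle length. Because each rewrite acts on the sequence, the output is automatically a single closed walk and connectivity never has to be argued separately; the main proof then observes that any double traversal forces one of the three patterns. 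Your symmetric U-turn replacement is arguably more natural, and your use of Theorem~\ref{onlyonce} to conclude that the two traversals are oppositely oriented is a clean first step; the price is that you must re-establish connectivity after cutting the aisle, whereas the paper's sequence rewrites never break the walk into pieces.

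The gap is in your degenerate case. When the bottom subwalk serves a pick via a partial entry at some $d_j$ with $j\neq i$, the phrase ``reroute through an already-visited subaisle'' is not yet a proof: subaisle $j$ is only partially entered from below, so routing through it to reconnect top and bottom requires extending that partial entry to a full traversal of $j$ and linking $u_j$ to the top subwalk along the top cross-aisle, both of which add length. You have not shown that the saving from deleting one full traversal of aisle $i$ (plus any redundant bottom cross-aisle segments) always dominates this added cost. That inequality can in fact be established, but it needs an explicit comparison of the horizontal distances involved; as written, your sketch stops right where that comparison would have to begin. Adopting the paper's sequence-level rewrites, or recasting your rerouting as a rewrite on the tour sequence rather than on the arc multigraph, would close the gap without the separate connectivity bookkeeping.
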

\begin{corollary}\label{singletrip_cons} (Single traversing constraints) \hl{For a single-block warehouse}, no optimal solution will be cut off by the following constraints:
\begin{alignat}{2} 
\label{sitr} \quad & \alpha_{tv} + \beta_{tv} \leq 1, & \qquad & \forall t\in \mathcal{T}, o\in O, v\in L_o 
\end{alignat}
\end{corollary}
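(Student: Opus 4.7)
The plan is to derive Corollary~\ref{singletrip_cons} directly from Theorem~\ref{singletrip} by showing that $\alpha_{tv}+\beta_{tv}=2$ at any picking location $v$ structurally forces picker $t$ to traverse the entire subaisle containing $v$ both downward and upward. Once this equivalence is established, the single traversing constraints merely exclude configurations that Theorem~\ref{singletrip} already guarantees are absent from any optimal walk, so the inequalities cut off no optimal solution.

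First, I would unpack the meaning of $\alpha$ and $\beta$ through the subaisle cut constraints (\ref{sub1})--(\ref{sub4}). Since $\alpha_{tv}\ge \alpha_{ts(v)}$ and $x_{tn(u)u}\ge\alpha_{tu}$, having $\alpha_{tv}=1$ for some $v\in V_{sub}(i)$ yields $\alpha_{tu}=1$ for every picking location $u$ between $f(i)$ and $v$ in subaisle $i$, and forces every southbound arc $n(u)\to u$ along that chain to be used by walk $t$; equivalently, there is a monotone directed path of used arcs from $f(i)$ down to $v$. A symmetric argument through (\ref{sub3})--(\ref{sub4}) shows that $\beta_{tv}=1$ forces a monotone directed path of used arcs from $l(i)$ up to $v$.

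Next, I would invoke flow conservation~(\ref{bs5}) together with Theorem~\ref{onlyonce} to propagate these two chains through the rest of the subaisle. At any picking location $u$ of subaisle $i$ the only arcs incident to $u$ are the four subaisle arcs $n(u)\to u$, $u\to s(u)$, $s(u)\to u$ and $u\to n(u)$, because interior picking vertices are not incident to any cross-aisle edge in the graphical representation of Section~\ref{prodes}. Hence, if $u$ receives both of its subaisle in-arcs, flow balance together with the single-use cap of Theorem~\ref{onlyonce} forces $u$ to use both of its subaisle out-arcs, extending the downbound path one step further south and the upbound path one step further north. Iterating this argument yields that walk $t$ uses every arc of subaisle $i$ in both directions, i.e., traverses the subaisle once downward and once upward, in direct contradiction with Theorem~\ref{singletrip}.

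The main obstacle is the propagation step: one must verify that the two directed paths cannot terminate partway through the subaisle, which relies crucially on the observation that every non-artificial vertex in a subaisle has undirected degree exactly two in the graph $G$. If any interior picking vertex were incident to a cross-aisle edge, then flow balance would no longer force both outgoing subaisle arcs to be used, and the induction could break down before reaching $l(i)$ or $f(i)$. Once this structural fact is in place, the remaining steps are mechanical and Theorem~\ref{singletrip} closes the argument.
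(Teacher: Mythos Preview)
Your argument is correct and follows the paper's approach, which simply asserts that the corollary ``can be obtained as a direct consequence of Theorem~\ref{singletrip}'' without spelling out the bridge. Your contribution is to make explicit why $\alpha_{tv}+\beta_{tv}=2$ forces a double traversal of the subaisle---via the monotonicity constraints~(\ref{sub1})--(\ref{sub4}), flow conservation~(\ref{bs5}), the binary cap on $x$, and the degree-two structure of interior picking vertices---which the paper leaves to the reader.
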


We remark that a subset of feasible solutions may \hl{not satisfy Theorem~\ref{singletrip}}; therefore, \hl{the inequalities it yields might not be} strictly valid inequalities. Furthermore, \hl{Theorem~\ref{singletrip}} can induce different constraints other than~(\ref{sitr}) and it is still unclear whether~(\ref{sitr}) is the most proper formulation. 

\hl{Corollary~\ref{singletrip_cons} can be obtained as a direct consequence of Theorem~\ref{singletrip}, and thus, it suffices to prove Theorem~\ref{singletrip}.} For a single-block warehouse, let the artificial locations in the first cross-aisle be $u_1,u_2,...,u_n$, and the artificial locations in the second cross-aisle \hl{be} $d_1,d_2,...,d_n$. Let picking locations in the path north to south in aisle $i$ be $v_{i1},v_{i2},...,v_{is_i}$. We illustrate all \hl{of} these concepts in Figure~\ref{single_cut}. \hl{A} Eulerian tour can be specified using a sequence of vertices, such as 
\begin{alignat}{2} 
r = (u_{1}v_{11}v_{12}...d_{1}d_{2}...u_{2}u_{1}) \notag
\end{alignat}

\begin{figure}
\centering
\resizebox*{8.5cm}{!}{\includegraphics{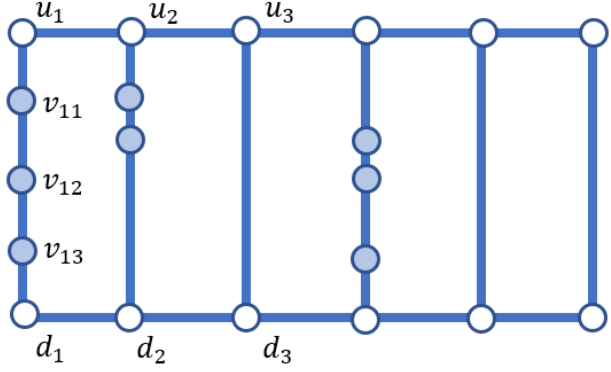}}
\hspace{5pt}
\caption{notations that we used in subsection~\ref{subsec-stc}} \label{single_cut}
\end{figure}

Let $V(r)$ denotes the set \hl{of} the vertices in $r$. We say that Eulerian tour $r'$ is better than Eulerian tour $r$ if $r,r'$ satisfy
\begin{enumerate}
  \item $\{V(r)\cap V_L\}$ \colorbox{yellow}{$\subseteq$}
  $\{V(r')\cap V_L\}$
  \item $r'$ is shorter than $r$
\end{enumerate}

Now, we introduce three lemmas to prove Theorem~\ref{singletrip}.
\begin{lemma}\label{sig1}
For any tour $r$, if there exists an aisle $i$ such that tour $r$ contains subpath $(u_{i}v_{i1}v_{i2}...v_{is_i}d_{i}v_{is_i}v_{i(s_{i}-1)}...v_{i1}u_{i})$, then there exists a tour $r'$ better than $r$.
\end{lemma}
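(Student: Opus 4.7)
The plan is to construct $r'$ explicitly from $r$ by excising the unnecessary detour to $d_i$ from within the given subpath. Concretely, I would locate the occurrence of $(u_i, v_{i1}, \dots, v_{is_i}, d_i, v_{is_i}, \dots, v_{i1}, u_i)$ inside $r$ and, within it, collapse the three-vertex segment $v_{is_i}, d_i, v_{is_i}$ (the excursion down to $d_i$ and straight back) to a single $v_{is_i}$, so that the subpath is replaced by $(u_i, v_{i1}, \dots, v_{is_i}, v_{i(s_i-1)}, \dots, v_{i1}, u_i)$. Every consecutive pair in the resulting vertex sequence is still joined by an edge of $G$ (the new adjacency $v_{is_i} \to v_{i(s_i-1)}$ uses an existing aisle edge), so $r'$ is still a closed walk and hence a legitimate tour.

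I would then verify the two conditions defining ``$r'$ is better than $r$''. For the length condition, the two deleted steps are a forward and a backward traversal of the single edge $[v_{is_i}, d_i]$, so the length of $r'$ equals the length of $r$ minus $2 d_{[v_{is_i}, d_i]} > 0$, and nothing else changes. For the coverage condition, the modified subpath still visits $u_i$ and every picking location $v_{i1}, \dots, v_{is_i}$ (with $v_{is_i}$ now serving as the turnaround point), while the only vertex of $r$ that may no longer appear in $r'$ is $d_i$, which lies in $V_I$ and is therefore not a picking location. Hence $V(r) \cap V_L \subseteq V(r') \cap V_L$.

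The main subtlety I expect to treat carefully is the claim that $r'$ is itself a valid tour in the same sense $r$ is, rather than just an arbitrary vertex sequence. At the multigraph level, the excision decreases the multiplicity of the edge $[v_{is_i}, d_i]$ by exactly two and leaves every other multiplicity unchanged, so the parity of every vertex degree is preserved and the underlying multigraph still admits an Euler circuit. At the walk level, the deletion contracts a closed back-and-forth sub-walk, so the closure property of $r$ transfers to $r'$. If $r$ happens to visit $d_i$ via cross-aisle edges elsewhere in the tour, those traversals are untouched by the excision, so the argument is unaffected by whether $d_i$ continues to appear in $r'$ or not.
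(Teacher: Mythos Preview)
Your proposal is correct and takes exactly the same approach as the paper: the paper's proof simply writes $r=(r_1\dots r_p u_i v_{i1}\dots v_{is_i} d_i v_{is_i}\dots v_{i1} u_i r_q\dots r_R)$ and declares $r'=(r_1\dots r_p u_i v_{i1}\dots v_{i(s_i-1)} v_{is_i} v_{i(s_i-1)}\dots v_{i1} u_i r_q\dots r_R)$ to be better, which is precisely your excision of the $v_{is_i}\to d_i\to v_{is_i}$ excursion. Your version is more thorough in explicitly verifying the walk, length, and picking-location-coverage conditions, but the underlying construction is identical.
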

\begin{proof}
We simply assume that $r=(r_1r_2...r_pu_{i}v_{i1}v_{i2}...v_{is_i}d_{i}v_{is_i}v_{i(s_{i}-1)}...v_{i1}u_{i}r_q...r_R)$. Then $r'=(r_1r_2...r_pu_{i}v_{i1}v_{i2}...v_{i(s_{i}-1)}v_{is_i}v_{i(s_{i}-1)}...v_{i1}u_{i}r_q...r_R)$ is a better tour.
\end{proof}

\hl{The reader may find a different application of Lemma~\ref{sig1} in \citep{VALLE2017817}, in which it also induced the artificial vertex reversal cuts.}

\begin{lemma}\label{sig2}
For any tour $r$, if there exists an aisle $i$ such that tour $r$ contains subpath $(u_{i-1}u_{i}v_{i1}v_{i2}...v_{is_i}d_{i}d_{i+1})$ and $(d_{i}v_{is_i}v_{i(s_i-1)}...v_{i_1}u_{i})$, then there exists a tour $r'$ better than $r$.
\end{lemma}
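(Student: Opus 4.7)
\textbf{Proof plan for Lemma~\ref{sig2}.} The strategy mirrors Lemma~\ref{sig1}: exhibit an explicit modification of $r$ that preserves picking coverage while strictly decreasing length. The starting point is the decomposition
$$
r = P_1 \cdot \underbrace{u_{i-1}, u_i, v_{i1}, \ldots, v_{is_i}, d_i, d_{i+1}}_{A} \cdot W \cdot \underbrace{d_i, v_{is_i}, \ldots, v_{i1}, u_i}_{B} \cdot P_3,
$$
where $W$ denotes the sub-walk of $r$ between $d_{i+1}$ (end of $A$) and $d_i$ (start of $B$). The geometric picture is that aisle $i$ is fully traversed twice, once south in $A$ and once north in $B$, while the cross-aisle edges $(u_{i-1},u_i)$ and $(d_i,d_{i+1})$ frame the southern traversal as if aisle $i$ were a through corridor.

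The candidate $r'$ is obtained by rerouting $B$ through a neighbouring aisle, thus removing the second full traversal of aisle $i$. Concretely, replace $B$ by
$$
B' = d_i, d_{i-1}, v_{(i-1)s_{i-1}}, \ldots, v_{(i-1)1}, u_{i-1}, u_i,
$$
yielding $r' = P_1 \cdot A \cdot W \cdot B' \cdot P_3$ (the symmetric version through aisle $i+1$ is also available). Because $A$ already collects every picking location on aisle $i$, removing $B$'s southward pass loses no pick; $B'$ additionally covers the picks on aisle $i-1$, so the coverage condition $V(r)\cap V_L \subseteq V(r')\cap V_L$ is immediate. The length change equals $|B'|-|B| = L_{i-1}+2h - L_i$, where $h$ is the cross-aisle edge length. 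For a strict improvement this must be negative.

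The main obstacle is therefore to argue that the modification is strictly length-decreasing \emph{universally}, not only when $L_{i-1}+2h<L_i$. I would address this as a two-step argument. First, observe that the existence of both $A$ and $B$ in $r$ forces specific structural features on $r$: in particular, $(u_{i-1},u_i)$ and $(d_i,d_{i+1})$ appear in $A$ while $u_i$ is revisited at the end of $B$, so the multigraph induced by $r$ has certain edges of multiplicity at least two. Using this slack, I would combine the $B \to B'$ rerouting with a second local modification that eliminates the now-redundant occurrences of $(u_{i-1},u_i)$ and $(d_i,d_{i+1})$, together with one of the aisle $i$ edge passes retained in $A$; the resulting multigraph stays Eulerian (parity is checked vertex-by-vertex), keeps the origin connected component intact, and, after reassembly, shortens the tour by at least $d(v_{is_i},d_i)+h$. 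I expect this parity/connectivity bookkeeping to be the technical crux. A case split on whether $W$ already uses the edge $(d_{i+1},d_i)$ or picks in aisles $i\pm 1$ handles the remaining possibilities, so that in every case one of the symmetric modifications (through aisle $i-1$ or $i+1$) delivers strict improvement.
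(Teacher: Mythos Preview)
Your proposal has a genuine gap. The replacement $B\to B'$ you choose is the wrong modification: routing $d_i\to d_{i-1}\to u_{i-1}\to u_i$ instead of $d_i\to u_i$ makes the walk \emph{longer} by $2h$ (in the standard setting where all aisles have equal length), as you yourself note. Your plan to recover strict improvement by ``combining with a second local modification that eliminates the now-redundant occurrences of $(u_{i-1},u_i)$ and $(d_i,d_{i+1})$'' does not work in general: nothing in the hypothesis forces those cross-aisle edges to have multiplicity $\geq 2$ in the multigraph of $r$, so there is no guaranteed slack to cancel. The subsequent case split on the behaviour of $W$ is vague and does not close this hole.

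The paper's argument modifies $A$, not $B$. Extend $A$ forward along the bottom cross-aisle until the tour first leaves it, say at $d_k$ heading into aisle $k$; the relevant segment of $r$ is then
\[
u_i,\,v_{i1},\ldots,v_{is_i},\,d_i,\,d_{i+1},\ldots,d_k,\,v_{ks_k}.
\]
Replace this by the top-cross-aisle route
\[
u_i,\,u_{i+1},\ldots,u_k,\,v_{k1},\ldots,v_{ks_k}.
\]
The horizontal distance is the same, and the vertical distance drops from $L+d(d_k,v_{ks_k})$ to $L-d(d_k,v_{ks_k})$, a strict saving of $2\,d(d_k,v_{ks_k})>0$. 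Coverage of the aisle-$i$ picks is preserved because the second subpath $(d_i,v_{is_i},\ldots,v_{i1},u_i)$ is left untouched elsewhere in $r$. This one-step rerouting avoids all the parity and multiplicity bookkeeping your proposal anticipates.
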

\begin{proof}
We simply assume that $r=(r_1r_2...r_pu_{t}u_{t+1}...u_{i-1}u_{i}v_{i1}v_{i2}...v_{is_i}d_{i}d_{i+1}...d_kv_{ks_k}r_q...r_R)$. Then $r'=(r_1r_2...r_pu_{t}u_{t+1}...u_{i-1}u_{i}...u_{k}v_{k1}v_{k2}...v_{ks_k}r_q...r_R)$ is a better tour.
\end{proof}

\begin{lemma}\label{sig3}
For any tour $r$, if there exists an aisle $i$ such that tour $r$ contains subpath $(u_{i-1}u_{i}v_{i1}v_{i2}...v_{is_i}d_{i}d_{i-1})$ and $(d_{i}v_{is_i}v_{i(s_i-1)}...v_{i_1}u_{i})$, then there exists a tour $r'$ better than $r$.
\end{lemma}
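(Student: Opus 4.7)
The plan is to construct the witness $r'$ by mirroring the argument in Lemma~\ref{sig2}, swapping the east-west orientation along the bottom cross-aisle. Under the hypothesis, after descending aisle $i$ the tour heads west to $d_{i-1}$. After absorbing any westward oscillations by local re-routings in the style of Lemma~\ref{sig1}, I may assume that the walk proceeds monotonically west from $d_i$ until it first ascends some aisle $k<i$, so that
$$r=(r_1\cdots r_p\,u_t\cdots u_{i-1}\,u_i\,v_{i1}\cdots v_{is_i}\,d_i\,d_{i-1}\cdots d_k\,v_{ks_k}\,r_q\cdots r_R).$$

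The candidate tour is
$$r'=(r_1\cdots r_p\,u_t\cdots u_{i-1}\,u_{i-2}\cdots u_k\,v_{k1}\,v_{k2}\cdots v_{ks_k}\,r_q\cdots r_R),$$
obtained by detouring west along the top cross-aisle from $u_{i-1}$ directly to $u_k$ and then descending aisle $k$. The second subpath $(d_i\,v_{is_i}\cdots v_{i1}\,u_i)$ is preserved verbatim inside $r_1\cdots r_p$ or $r_q\cdots r_R$, so aisle $i$'s picking locations, together with $u_i$ and $d_i$, remain covered, verifying $V(r)\cap V_L\subseteq V(r')\cap V_L$. Since every consecutive pair in the rewritten sequence is joined by an edge of $G$, $r'$ is a valid closed walk (allowing repeated edges, interpreted as an Eulerian tour of the induced multigraph).

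For the length comparison, a direct edge count suffices. The segment of $r$ between $u_{i-1}$ and $v_{ks_k}$ uses one east edge plus $s_i+1$ edges down aisle $i$, $i-k$ west edges on the bottom, and $s_k+1$ edges up aisle $k$; the corresponding segment of $r'$ uses only $i-1-k$ west edges on the top plus $s_k+1$ edges down aisle $k$. The net saving is $s_i+3$ positive-length edges, so $r'$ is strictly shorter than $r$.

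The main obstacle is justifying the clean normal form written for $r$. A priori the walk could zig-zag along the bottom cross-aisle before ascending, or it could temporarily abandon the bottom cross-aisle and re-enter before its first ascent. Both situations are removed by repeated application of the local re-routings underlying Lemmas~\ref{sig1} and~\ref{sig2}, which smooth out exactly this sort of inefficiency; this is the tacit simplification also used in Lemma~\ref{sig2}'s proof. A minor secondary point is that the second subpath may enter $d_i$ from either $d_{i+1}$ or $d_{i-1}$, but in both cases it uses directed arcs disjoint from those in the modified segment, so the construction goes through in either subcase.
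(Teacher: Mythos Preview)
Your construction is essentially the paper's: replace the descent of aisle $i$ followed by the westward bottom run to $d_k$ by a westward top run to $u_k$ and a descent of aisle $k$. The paper's $r'$ actually keeps the step to $u_i$ before turning west (it writes $u_t\cdots u_{i-1}u_i\cdots u_k$, under the extra assumption $k\ge t$), whereas you turn already at $u_{i-1}$; both variants work, and yours even saves two additional cross-aisle edges.

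Your length comparison, however, is not valid as written. Counting edges does not establish ``shorter'' because the vertical edges in $G$ have varying lengths depending on where picking locations sit in the subaisle; and your counts are in any case wrong: in $r$ the step $d_k\to v_{ks_k}$ is a single edge, not $s_k+1$, and in $r'$ the descent $u_k\to v_{ks_k}$ uses $s_k$ edges, not $s_k+1$. The clean fix is to compare distances directly. Horizontally, all cross-aisle edges have a common length, and $r$ uses $1+(i-k)$ such edges in the segment while $r'$ uses $i-1-k$, a strict saving. Vertically, $r$ spends the full aisle length $D$ on aisle $i$ plus $d(d_k,v_{ks_k})$, whereas $r'$ spends only $d(u_k,v_{ks_k})=D-d(d_k,v_{ks_k})$; the vertical saving is $2\,d(d_k,v_{ks_k})>0$. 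With this correction the argument goes through.
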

\begin{proof}
We simply assume that $r=(r_1r_2...r_pu_{t}u_{t+1}...u_{i-1}u_{i}v_{i1}v_{i2}...v_{is_i}d_{i}d_{i-1}...d_kv_{ks_k}r_q...r_R)$ with $k\geq t$. Then $r'=(r_1r_2...r_pu_{t}u_{t+1}...u_{i-1}u_{i}...u_{k}v_{k1}v_{k2}...v_{ks_k}r_q...r_R)$ is a better tour.
\end{proof}

Now, we can give the proof for Theorem~\ref{singletrip}.
\begin{proof}
We assume by contradiction that there exists an optimal tour $r=(r_1r_2...r_pu_{i-1}u_iv_{i1}...v_{is_i}d_ir_q...r_R)$ and $r$ contains subpath $(d_{i}v_{is_i}v_{i(s_i-1)}...v_{i_1}u_{i})$. Then
\begin{enumerate}
  \item If $(r_qr_{q+1}...r_{q+s_i})=(v_{is_i}v_{i(s_i-1)}...v_{i_1}u_{i})$, lemma~\ref{sig1} provides a better tour.
  \item If $r_q=d_{i+1}$, lemma~\ref{sig2} provides a better tour.
  \item If $r_q=d_{i-1}$, lemma~\ref{sig3} provides a better tour.
\end{enumerate}
Thus, no such optimal tour exists.
\end{proof}

\hl{In the remainder of this subsection, we extend the result for a 2-block warehouse. Let $1$ be the index of the topmost left subaisle, and we have the following theorem.
}

\begin{theorem}\label{twotrip} \hl{For a 2-block warehouse, there exists an optimal walk that traverses any subaisle $i\in \{2,3,...,W_{sub}\}$ at most once.}
\end{theorem}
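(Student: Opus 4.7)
The plan is to follow the exchange-argument template used for Theorem~\ref{singletrip}: assume by contradiction that there is an optimal Eulerian tour $r$ that traverses some subaisle $i \in \{2,\ldots,W_{sub}\}$ twice, and construct a tour $r'$ covering at least the same picking locations but strictly shorter. As in the single-block proof, the modification is local to the portion of $r$ near the two traversals of subaisle $i$, and reroutes through a neighbouring subaisle or a cross-aisle edge.

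First I would fix notation: let $a = f(i)$ and $b = l(i)$ be the north and south artificial endpoints of subaisle $i$, and call the cross-aisles incident to $a$ and $b$ the \emph{upper} and \emph{lower} cross-aisle of subaisle $i$. For a subaisle in the top block these are the topmost cross-aisle and the middle cross-aisle, and for a subaisle in the bottom block they are the middle and bottom cross-aisles. The two traversals of subaisle $i$ appear in $r$ as subpaths of the form $(a\,v_{i1}\ldots v_{is_i}\,b)$ and $(b\,v_{is_i}\ldots v_{i1}\,a)$. The crucial role of the hypothesis $i\neq 1$ is that both $a$ and $b$ have a cross-aisle neighbour to the west; this makes the style of rerouting used in Lemmas~\ref{sig1}--\ref{sig3} available at either endpoint. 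In the excluded case $i=1$ we have $a=s$, so $a$ has no western neighbour, and a double traversal of subaisle $1$ may genuinely be forced whenever the only remaining picking locations lie in the bottom block directly beneath the origin.

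The second step is to lift Lemmas~\ref{sig1}--\ref{sig3} to the 2-block setting. Lemma~\ref{sig1} transfers verbatim: whenever $r$ stitches the two traversals of subaisle $i$ back-to-back at $a$ (or at $b$), the last edge of the second traversal can be dropped without losing any picking location. The analogues of Lemmas~\ref{sig2} and~\ref{sig3} require a larger case analysis because in a 2-block layout each artificial endpoint has one additional incident edge: when $a$ or $b$ lies on the middle cross-aisle, it is also incident to the subaisle of the other block lying on the opposite side. For every pair of directions in which $r$ leaves $b$ after the first traversal and re-enters $a$ after the second traversal, I would exhibit an explicit shortcut that detours through a neighbouring aisle or cross-aisle edge while preserving connectivity and picking-location coverage, exactly in the style of the original three lemmas. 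Once each case yields a strictly shorter tour, the contradiction with optimality follows and the theorem is proved.

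The main obstacle will be the combinatorial explosion of cases. With subaisle $i$ potentially lying in either block and with up to three cross-aisle neighbours at each endpoint, the enumeration is noticeably larger than in the single-block proof. I would keep it tractable by first reducing, via Lemma~\ref{sig1}, to the situation in which the two traversals of subaisle $i$ are separated in $r$ by a non-trivial cross-aisle subpath, and then by choosing among all optimal tours one that minimises the total number of doubly-traversed subaisles in $\{2,\ldots,W_{sub}\}$. The local surgery exhibited in each case would strictly decrease this count without increasing the length, contradicting the minimality of the chosen tour. This organisation mirrors the single-block proof while accommodating the additional routing options created by the middle cross-aisle.
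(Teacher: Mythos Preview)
Your plan is plausible but takes a genuinely different route from the paper. Rather than lifting Lemmas~\ref{sig1}--\ref{sig3} case by case to the 2-block layout, the paper decomposes an optimal walk $r$ into consecutive maximal segments $r_1,\dots,r_R,r_0$, each confined to a single block. The single-block argument behind Theorem~\ref{singletrip} then rules out a double traversal occurring inside any one $r_k$, and a further exchange in the spirit of Lemmas~\ref{sig2}--\ref{sig3} shows that two distinct segments $r_k,r_t$ (with $k,t\le R$) need not both traverse the same subaisle. Only the final return leg $r_0$ to the origin may be forced to reuse a subaisle, and that reuse can always be routed through subaisle~$1$; this is the actual reason the statement excludes $i=1$. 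Your direct-lifting approach could in principle be completed, but the decomposition sidesteps the case explosion you anticipate by reducing everything to the already-proved single-block situation.

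One point in your outline needs correcting: your claim that $i\neq1$ guarantees both $f(i)$ and $l(i)$ have a western cross-aisle neighbour is false for the bottom-left subaisle, whose index is not~$1$ yet whose endpoints are the leftmost vertices of the middle and bottom cross-aisles. So the role of the hypothesis $i\neq 1$ is not the availability of a westward reroute; in the paper's argument it arises purely from the return leg~$r_0$. If you persist with the direct case analysis you will have to treat the leftmost subaisle of the lower block by a separate argument, whereas the block-wise decomposition handles it uniformly.
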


\begin{proof}
Let $r$ be an optimal walk. We can partition this tour into several parts $r_1,r_2,...,r_R,r_0$ and each part is located in the first block or the second block (see Figure~\ref{revcon2fig}). As shown in the proof of Theorem~\ref{singletrip}, a subaisle cannot be traversed twice by any subtour $r_k,k\in \{0,1,...,R\}$. Furthermore, there always exists an optimal solution such that a subaisle will not be traversed by different subtours $r_k,r_t$ where $k,t\in \{1,2,...,R\}$. The construction of such an optimal solution is similar to the proofs of Lemma~\ref{sig2}-\ref{sig3}. Thus, there exists an optimal walk $r$ such that any subaisle is traversed at most once by $r'=(r_1,r_2,...,r_R)$. If a picker needs to pick some products in $r_0$, then any subaisle is traversed at most once by $r$; otherwise, the picker needs to return to the origin immediately, and we can assume that the picker first goes to the south artificial vertex of subaisle $1$ and then goes to the origin. This completes the proof.

\begin{figure}
\centering
\resizebox*{8.5cm}{!}{\includegraphics{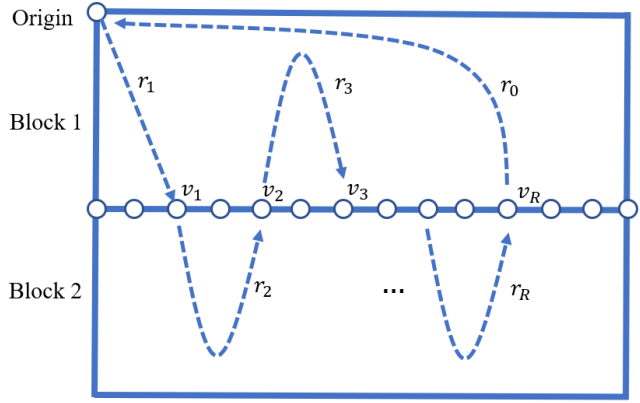}}
\hspace{5pt}
\caption{The partition of the walk $r$} \label{revcon2fig}
\end{figure}

\end{proof}

\begin{corollary}\label{twotrip_cons} \hl{(Single traversing constraints) For a 2-block warehouse, there exists an optimal solution satisfying the following constraints:}
\begin{alignat}{2} 
\label{sitr2} \quad & \alpha_{tv} + \beta_{tv} \leq 1, & \qquad & \forall t\in \mathcal{T}, o\in O, v\in L_o \backslash V_{sub}(1) 
\end{alignat}
\end{corollary}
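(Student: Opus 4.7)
The approach is to deduce Corollary~\ref{twotrip_cons} directly from Theorem~\ref{twotrip} by constructing, from an optimal walk guaranteed by the theorem, a compatible assignment of the auxiliary variables $(\alpha,\beta)$ that satisfies both the subaisle cuts~(\ref{sub1})--(\ref{sub7}) and the target inequality~(\ref{sitr2}). First I would fix, for each picker $t\in\mathcal{T}$, an optimal walk $r^*_t$ satisfying the conclusion of Theorem~\ref{twotrip}, so that every subaisle $i\geq 2$ is traversed at most once by $r^*_t$; let $x^*$ be the corresponding arc-indicator variable and $z^*$ the batching variable inherited from the original optimal solution.

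Next I would define $(\alpha^*,\beta^*)$ subaisle by subaisle. For each picking location $v\in V_{sub}(i)$, if every downward arc $(f(i),s(f(i))),\ldots,(n(v),v)$ appears in $x^*$, set $\alpha^*_{tv}=1,\beta^*_{tv}=0$; else, if every upward arc $(l(i),n(l(i))),\ldots,(s(v),v)$ appears in $x^*$, set $\alpha^*_{tv}=0,\beta^*_{tv}=1$; otherwise set both to zero. Constraints~(\ref{sub1})--(\ref{sub4}) hold by construction. For~(\ref{sub5}), whenever $z^*_{ot}=1$ and $v\in L_o$, picker $t$ visits $v$; by Theorem~\ref{onlyonce} any sub-walk of $r^*_t$ passing through $v$ inside the subaisle is a simple directed path, so it must reach $v$ either monotonically from $f(i)$ along downward arcs or monotonically from $l(i)$ along upward arcs, making one of the two cases apply.

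Finally, the target constraint~(\ref{sitr2}) could be violated only if both $\alpha^*_{tv}$ and $\beta^*_{tv}$ are set to $1$, i.e., the full downward chain from $f(i)$ to $v$ and the full upward chain from $l(i)$ to $v$ both appear in $x^*$. Flow conservation at $v$ then forces the outgoing arcs $(v,n(v))$ and $(v,s(v))$ to appear in $x^*$ as well. Propagating the flow-balance argument outward from $v$ and using Theorem~\ref{onlyonce} to forbid arc reuse, every directed arc of the subaisle ends up in $x^*$, which means $r^*_t$ traverses the subaisle at least twice; for $i\geq 2$ this contradicts Theorem~\ref{twotrip}. Hence for $v\in L_o\setminus V_{sub}(1)$ the bad case cannot occur, and $\alpha^*_{tv}+\beta^*_{tv}\leq 1$ holds.

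The main obstacle I expect is the final flow-propagation step: one must confirm that the induced arc pattern really does amount to two full monotone crossings in the sense used by Theorem~\ref{twotrip}, rather than some more exotic cyclic pattern inside the subaisle. The key inputs are the linearity of the subaisle (no branching inside) and the single-use arc bound, which together leave no room for any walk using all $2(m+1)$ directed arcs of an $m$-location subaisle other than two full $f(i)$-to-$l(i)$ crossings (one in each direction). Once this identification is made cleanly, the rest of the argument is a direct translation between the walk picture of Theorem~\ref{twotrip} and the variable picture of the formulation.
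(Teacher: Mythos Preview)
Your proposal is correct and follows the same approach as the paper, which disposes of the corollary in a single line (``This follows immediately from Theorem~\ref{twotrip}''); you have simply unpacked that line by exhibiting an explicit $(\alpha^*,\beta^*)$ compatible with the optimal walk from Theorem~\ref{twotrip}. One small inconsistency: because your definition of $(\alpha^*,\beta^*)$ is an if--elseif--else cascade, the sum $\alpha^*_{tv}+\beta^*_{tv}$ is already in $\{0,1\}$ by construction, so your final flow-propagation paragraph is superfluous---that argument would only be needed if you had defined $\alpha^*$ and $\beta^*$ independently (as in the proof of Theorem~\ref{PG}), in which case both could be~$1$ and the contradiction via Theorem~\ref{twotrip} becomes the actual content.
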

\begin{proof}
This follows immediately from Theorem~\ref{twotrip}.
\end{proof}

\section{No-reversal case}\label{no-revcase}

\hl{In this section, we introduce the no-reversal case of the JOBPRP and present new formulations that are less influenced by symmetric solutions.
}

We note that it would be impractical to search for an optimal solution when there are a vast number of orders. By \hl{assuming that routing is conducted in a no-reversal fashion}, \citep{VALLE2017817} obtained an efficiently solvable formulation. The no-reversal constraints are described as follows.
\begin{alignat}{2} 
\label{norev1} \quad & x_{tn(v)v} = x_{tv}, &  \qquad & \forall t\in \mathcal{T}, i\in [W_{sub}], v \in V_{sub}(i) \cup \{l(i)\} \\
\label{norev2} \quad & x_{ts(v)v} = x_{tv}, &  \qquad & \forall t\in \mathcal{T}, i\in [W_{sub}], v \in V_{sub}(i) \cup \{f(i)\} 
\end{alignat}

In the case of no-reversal constraints, a picker is not allowed to reverse in a subaisle. Many feasible solutions, including some or all optimal solutions, will be cut off by these constraints. \hl{Note that the no-reversal JOBPRP can also} be regarded as an approximate problem, \hl{and by solving this problem, we can quickly make order batching decisions. Then each picker can be rerouted by applying other methods such as a classical TSP model.}

\hl{Although these constraints can greatly simplify the decision problem, } this problem still suffers severely from the presence of symmetry. A large amount of equivalence of solutions induced by symmetry routes might confound the branch-and-bound \hl{(or branch-and-cut)} process, as illustrated in Figure~\ref{symrt}. \hl{In other words, there is also a substantial room for the further improvement of the no-reversal formulation.}

\begin{figure}
\centering
\subfigure{%
\resizebox*{6cm}{!}{\includegraphics{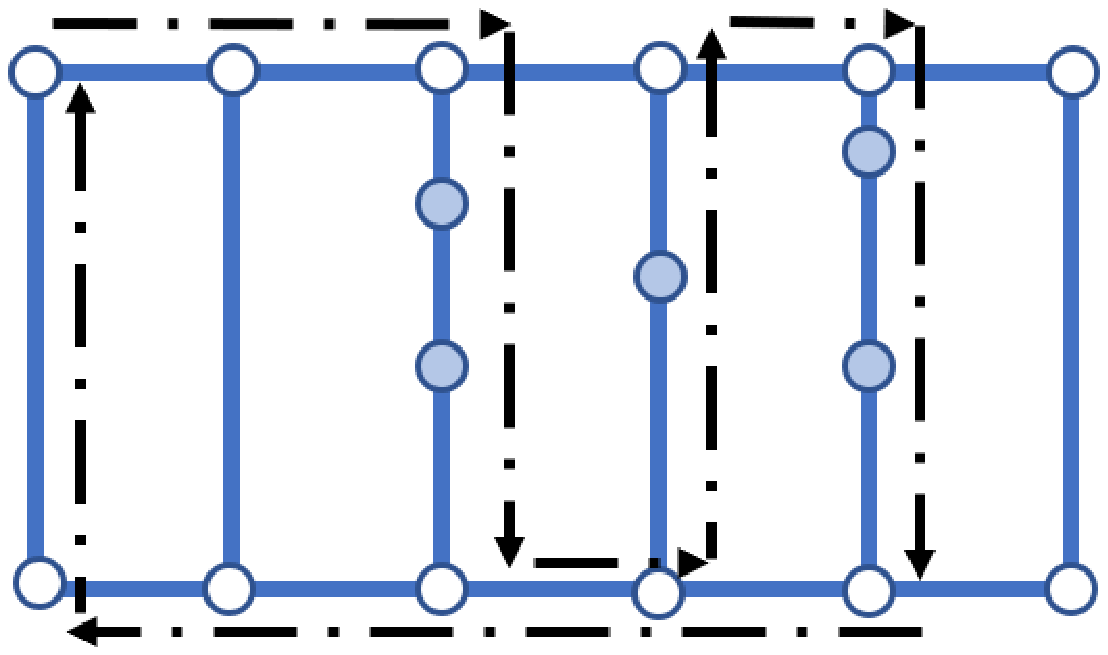}}}
\subfigure{%
\resizebox*{6cm}{!}{\includegraphics{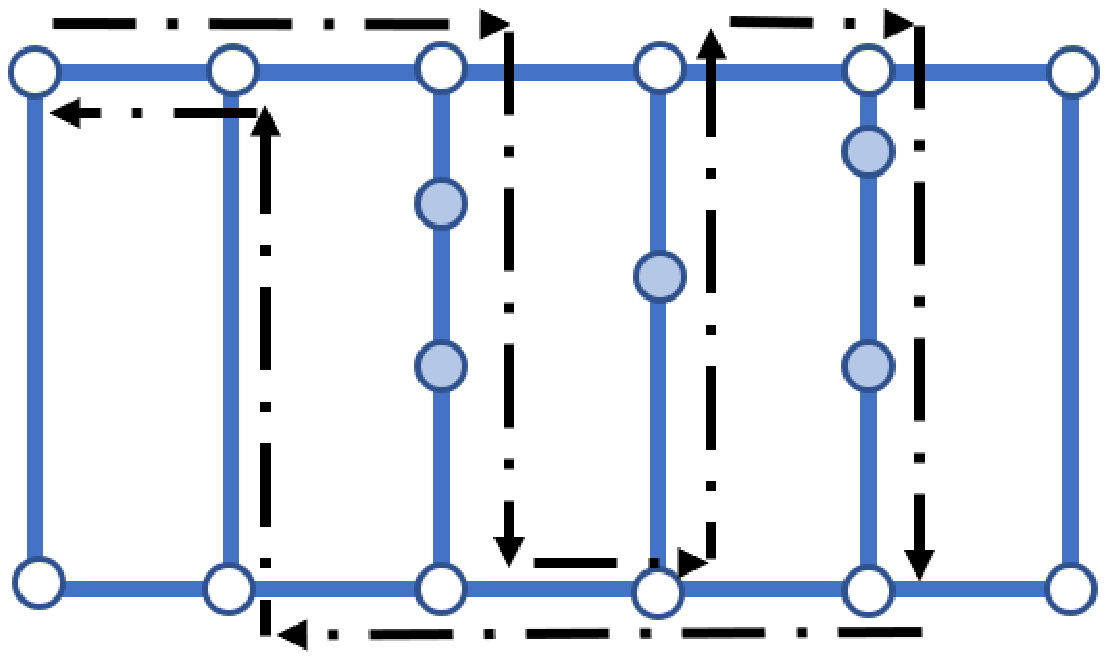}}}
\caption{Symmetry routes} \label{symrt}
\end{figure}

\subsection{TSP formulation for a single-block warehouse}

A single-block warehouse is first considered. We start with an undirected graph $G^+=(V_I,E^+)$, which is constructed as follows: Recall that $e=[u,v]$ denote the unordered pair of location $u$ and location $v$. $E^+$ is defined as $E_1^+\cup E_2^+$ where $E_1^+=\{[u,v]:(u,v)\in \tilde{E}'\}$ and $E_2^+=\{[s,v]:v\in V_I\backslash \{s\} \}$. For any subaisle $i$, let $e(i)$ denote the edge $[f(i),l(i)]$. The resulting feasible region \hl{$P^{1}_{U}$} then consists of the following constraints.
\begin{alignat}{2} 
\label{tspo0} \quad & x_{t[s,l(1)]} + x_{t[s,f(2)]} \geq 1, &  \qquad & \forall t\in \mathcal{T} \\
\label{tspo1} \quad & \sum_{[s,v]\in \delta(s)}x_{t[s,v]} + \tilde{x}_t = 2, &  \qquad & \forall t\in \mathcal{T} \\
\label{tspo2} \quad & x_{t[u,v]} \geq z_{ot}, &  \qquad & \forall t\in \mathcal{T}, i\in [W_{sub}], [u,v] = e(i), o\in \{ o: V_{sub}(i) \cap L_o \neq \emptyset\} \\
\label{tspo3} \quad & \sum_{[l(1),v] \in \delta(l(1))} x_{t[l(1),v]} + \tilde{x}_t = 2 y_{tu}, & \qquad & \forall t\in \mathcal{T} \\
\label{tspo4} \quad & \sum_{[u,v] \in \delta(u)} x_{t[u,v]} = 2 y_{tu}, & \qquad & \forall t\in \mathcal{T},  u \in V_I\backslash \{ s,l(1) \} \\
\label{tspo5} \quad & \sum_{[u,v] \in \delta(S)} x_{t[u,v]} \geq 2 y_{tu_0}, & \qquad & \forall t\in \mathcal{T},  S \subset V_I\backslash \{ s \}, |S|\geq 2, u_0\in S \\
\label{tspo6} \quad & \sum_{t \in \mathcal{T}} z_{ot} = 1, & \qquad & \forall o \in O \\
\label{tspo7} \quad & \sum_{o \in O} b_o z_{ot} \leq B, & \qquad & \forall t \in \mathcal{T} \\
\label{tspo8} \qquad & x_{t[u,v]} \in \{ 0,1 \}, & \qquad &  \forall t \in \mathcal{T}, [u,v] \in E^+\\
\label{tspo9} \qquad & \tilde{x}_{t} \in \{ 0,1 \}, & \qquad &  \forall t \in \mathcal{T} \\
\label{tspo10} \qquad & y_{tv} \in \{ 0,1 \}, & \qquad &  \forall t \in \mathcal{T}, v \in V_I\\
\label{tspo11} \qquad & z_{ot} \in \{ 0,1 \}, & \qquad &  \forall t \in \mathcal{T}, o \in O
\end{alignat}
Here, $x_{t[u,v]}$ is a binary variable, which takes the value $1$ if and only if edge $[u,v]$ \hl{is} traversed by walk $t$. We let binary variable $\tilde{x}$ indicate whether a parallel edge between $f(1)$ and $s(1)$ should be added (note that the origin $s=f(1)$). We actually formulate the standard TSP \hl{on $E^+$ (possibly with a parallel edge of $e(1)$)} using constraints~(\ref{tspo0})-(\ref{tspo5}). We illustrate how our formulation works in Figure~\ref{tspexample}. 

\hl{The TSP-based formulation is highly dependent on the construction of the auxiliary graph. Once we define the undirected graphs for other warehouses, we are able to propose extended formulations. In the next subsection, we discuss the auxiliary graph for a 2-block warehouse.}

\begin{figure}
\centering
\resizebox*{7cm}{!}{\includegraphics{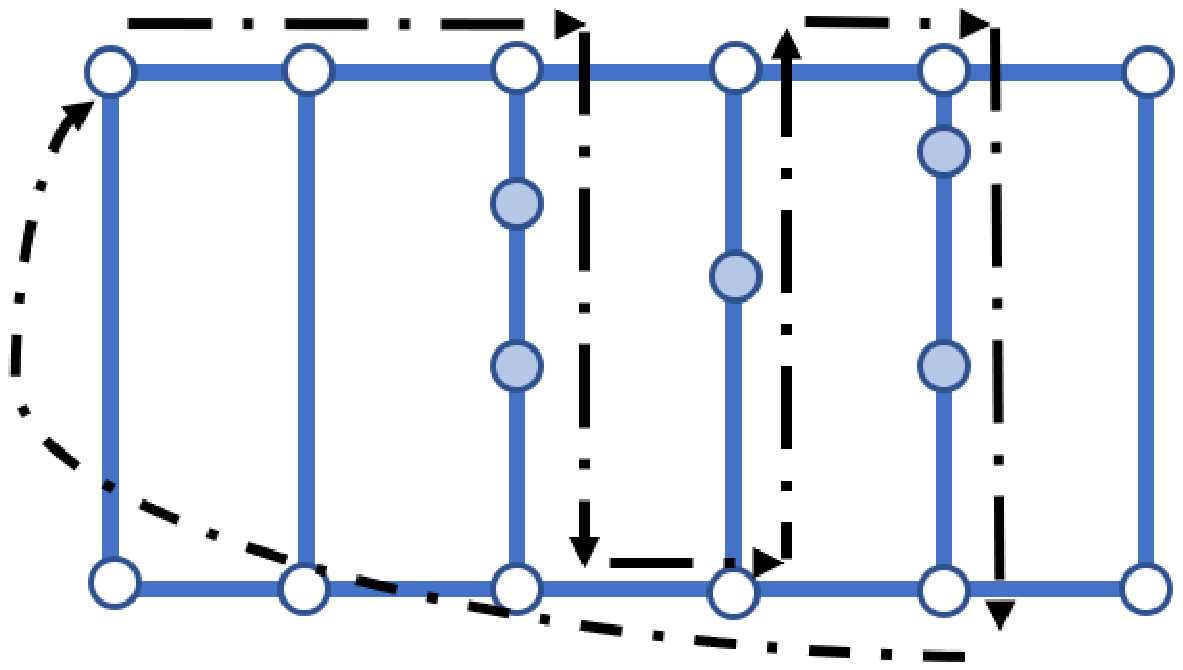}}
\hspace{5pt}
\caption{The only path back to the origin} \label{tspexample}
\end{figure}

\subsection{TSP formulation for a 2-block warehouse}\label{tsp2block}

\hl{We} should propose a more sophisticated auxiliary graph which can \hl{induce} a TSP formulation that contains an optimal solution. An optimal route in a 2-block warehouse may be different from an optimal route in a single-block warehouse such that:
\begin{enumerate}
  \item The picker may require to traverse a subaisle twice to enter the other block.
  \item The picker may require to traverse some edge in the second cross-aisle twice before retrieving all products from storage (after \hl{which} the picker should go back to the origin).
\end{enumerate}

\hl{From the above observations, we should add more parallel edges to the graph to preserve optimal solutions. In fact, we are describing the problem property when adding parallel edges. Once the auxiliary graph maintains a subset of optimal solutions, we obtain a TSP-based formulation.}

Now, we show how to construct the auxiliary graph. Let $W$ be a set of artificial locations in the second cross-aisle. We construct a copy $v'$ for any $v\in W$ and let the set of copies be $W'$. \hl{Let $E_1^+$ be the set of edges connecting neighboring vertex. $E_2^+$ and $E_3^+$ are defined as follows.}
\begin{enumerate}
  \item $E_2^+=\{[v',Q_N(v)]:v\in W\} \cup \{[v,Q_S(v')]:v\in W\}$
  \item $E_3^+=\{[s,v]:v\in \{ V \cup W' \} \backslash \{ s\} \}$
\end{enumerate}

\hl{$E_1^+$ enables the picker to reverse direction in the second cross-aisle; $E_2^+$ enables the picker to traverse a subaisle twice; $E_3^+$ enables the picker to return to the origin after picking is completed.}
We compare the two auxiliary graphs in this section, as shown in Figure~\ref{tspcon}.
\begin{figure}
\centering
\subfigure[$E^+$ for a single-block warehouse]{%
\resizebox*{6cm}{!}{\includegraphics{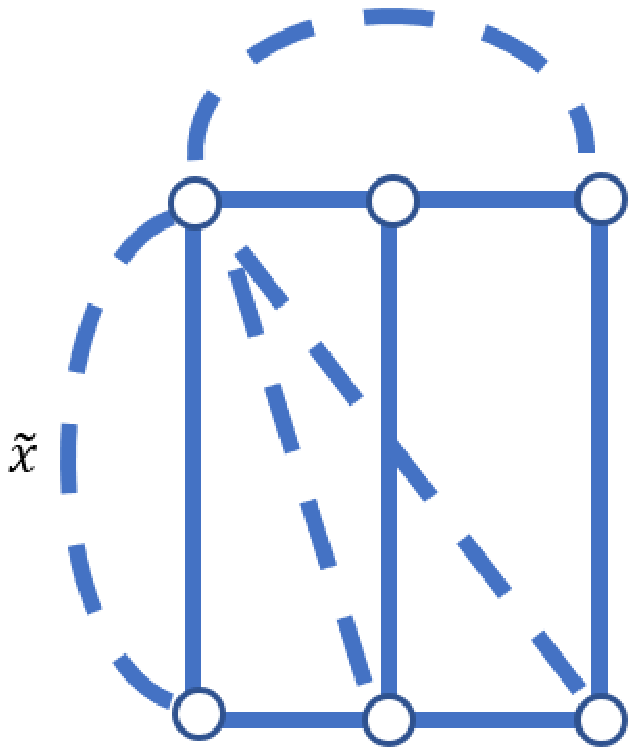}}}
\subfigure[$\cup_{i=1}^3 E_i^+$ for a 2-block warehouse]{%
\resizebox*{5cm}{!}{\includegraphics{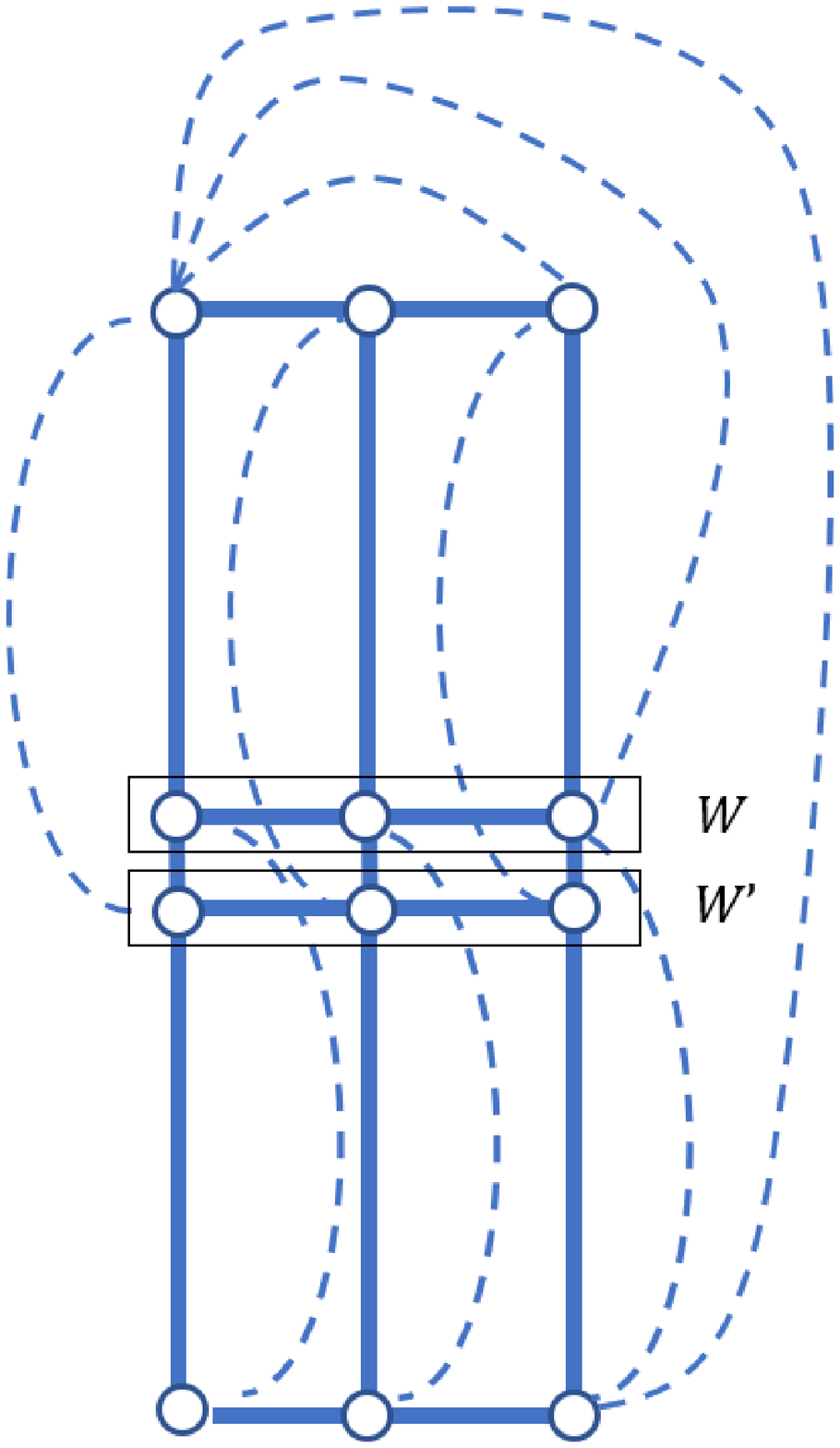}}}
\caption{Auxiliary graphs} \label{tspcon}
\end{figure}

\hl{For each picker $t\in \mathcal{T}$, let $x_{t[u,v]}\in \{0,1\}$ be an indicator variable equal to $1$ if $[u,v]\in E_1^+\cup E_2^+$ is traversed, and $\tilde{x}_{t[u,v]}\in \{0,1\}$ be an indicator variable equal to $1$ if $[u,v]\in E_3^+$ is traversed. The TSP-based formulation $P^2_U$ is similar to $P^1_U$, and is described by the following constraints.
}
\begin{alignat}{2} 
\label{tspt0} \quad & \sum_{[s,v]\in \delta(s)}x_{t[s,v]} \geq 1, &  \qquad & \forall t\in \mathcal{T} \\
\label{tspt1} \quad & \sum_{[s,v]\in \delta(s)}(x_{t[s,v]} + \tilde{x}_{t[s,v]}) = 2, &  \qquad & \forall t\in \mathcal{T} \\
\label{tspt2} \quad & x_{t[u,v]} \geq z_{ot}, &  \qquad & \forall t\in \mathcal{T}, i\in [W_{sub}], [u,v] = e(i), o\in \{ o: V_{sub}(i) \cap L_o \neq \emptyset\} \\
\label{tspt3} \quad & \sum_{[u,v] \in \delta(u)} (x_{t[u,v]} + \tilde{x}_{t[u,v]}) = 2 y_{tu}, & \qquad & \forall t\in \mathcal{T},  u \in V_I\backslash \{ s \} \\
\label{tspt4} \quad & \sum_{[u,v] \in \delta(S)} (x_{t[u,v]}+\tilde{x}_{t[u,v]}) \geq 2 y_{tu_0}, & \qquad & \forall t\in \mathcal{T},  S \subset V_I\backslash \{ s \}, |S|\geq 2, u_0\in S \\
\label{tspt5} \quad & \sum_{t \in \mathcal{T}} z_{ot} = 1, & \qquad & \forall o \in O \\
\label{tspt6} \quad & \sum_{o \in O} b_o z_{ot} \leq B, & \qquad & \forall t \in \mathcal{T} \\
\label{tspt7} \qquad & x_{t[u,v]} \in \{ 0,1 \}, & \qquad &  \forall t \in \mathcal{T}, [u,v] \in E^+_1\cup E^+_2\\
\label{tspt8} \qquad & \tilde{x}_{t[u,v]} \in \{ 0,1 \}, & \qquad &  \forall t \in \mathcal{T}, [u,v] \in E^+_3 \\
\label{tspt9} \qquad & y_{tv} \in \{ 0,1 \}, & \qquad &  \forall t \in \mathcal{T}, v \in V_I\\
\label{tspt10} \qquad & z_{ot} \in \{ 0,1 \}, & \qquad &  \forall t \in \mathcal{T}, o \in O
\end{alignat}

\hl{A question immediately arises: is there always an optimal picking tour that can be induced by a feasible solution of $P^2_U$? In the remainder of this section, we reveal the existence of such a tour.
}

Assume that we have known the set $K_1$($K_2$) of subaisles in the first (second) block, which contains at least one product to be picked. For simplicity, we assume that $K_1\neq \emptyset$ and $K_2\neq \emptyset$. \hl{Let $i_0$ be the first subaisle in $K_1$. The following two routes, which we call S-shape routes, are considered.
}
\begin{enumerate}
    \item $r_S^1$ will first visit all subaisles in $K_1\backslash \{i_0\}$, then visit all subaisles in $K_2$ and finally visit subaisle $i_0$ (as shown in Figure~\ref{s-shape}(a)).
    \item $r_S^2$ will first visit all subaisles in $K_1$, then visit all subaisles in $K_2$ (as shown in Figure~\ref{s-shape}(b)).
\end{enumerate}

\hl{Obviously, routes of type $r_S^1$ or $r_S^2$ can always be represented by a feasible solution of $P^2_U$. Furthermore, the following theorem guarantees the existence of an optimal tour.}

\begin{figure}
\centering
\subfigure[$r_S^1$]{%
\resizebox*{5cm}{!}{\includegraphics{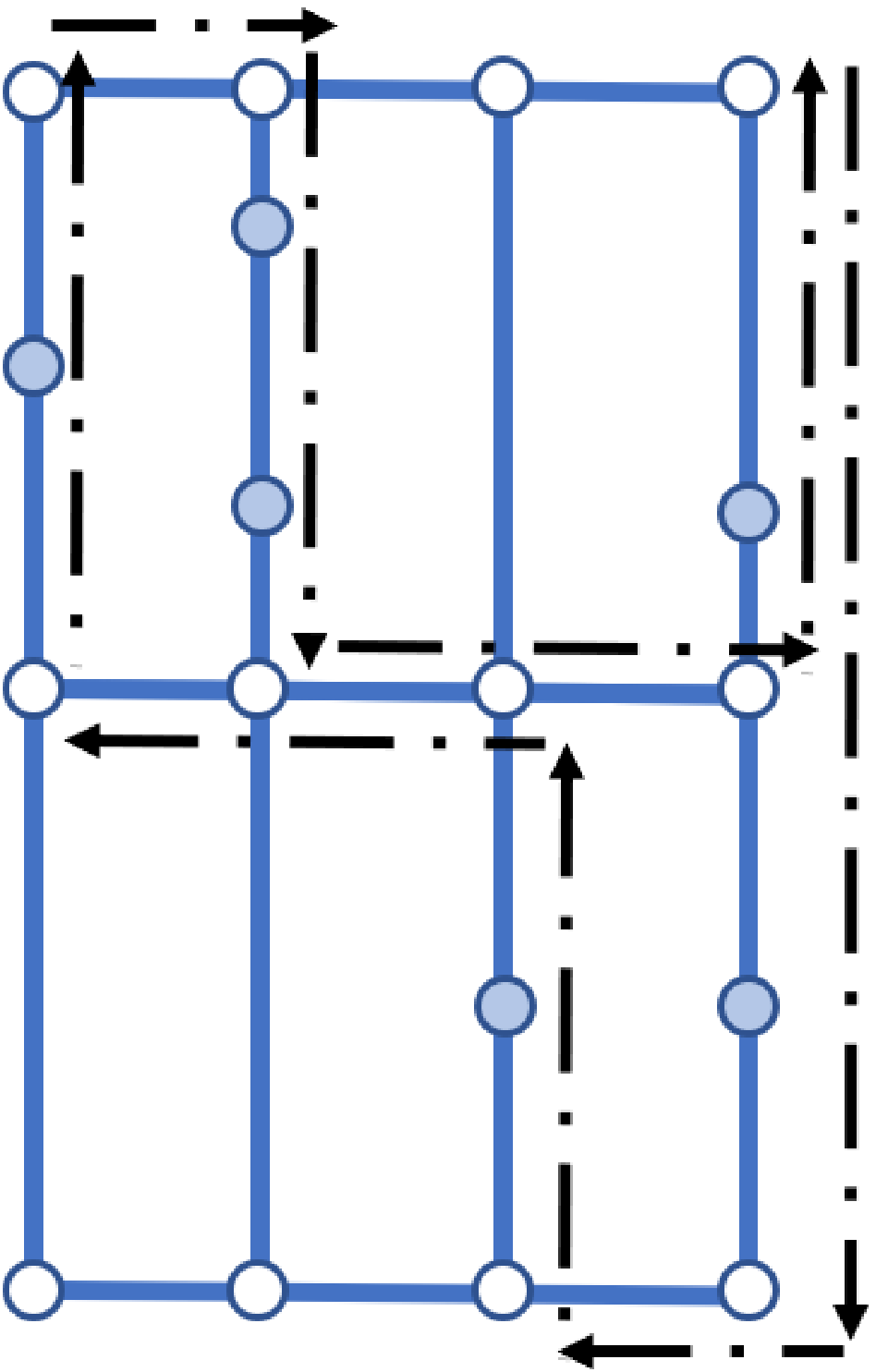}}}
\subfigure[$r_S^2$]{%
\resizebox*{5cm}{!}{\includegraphics{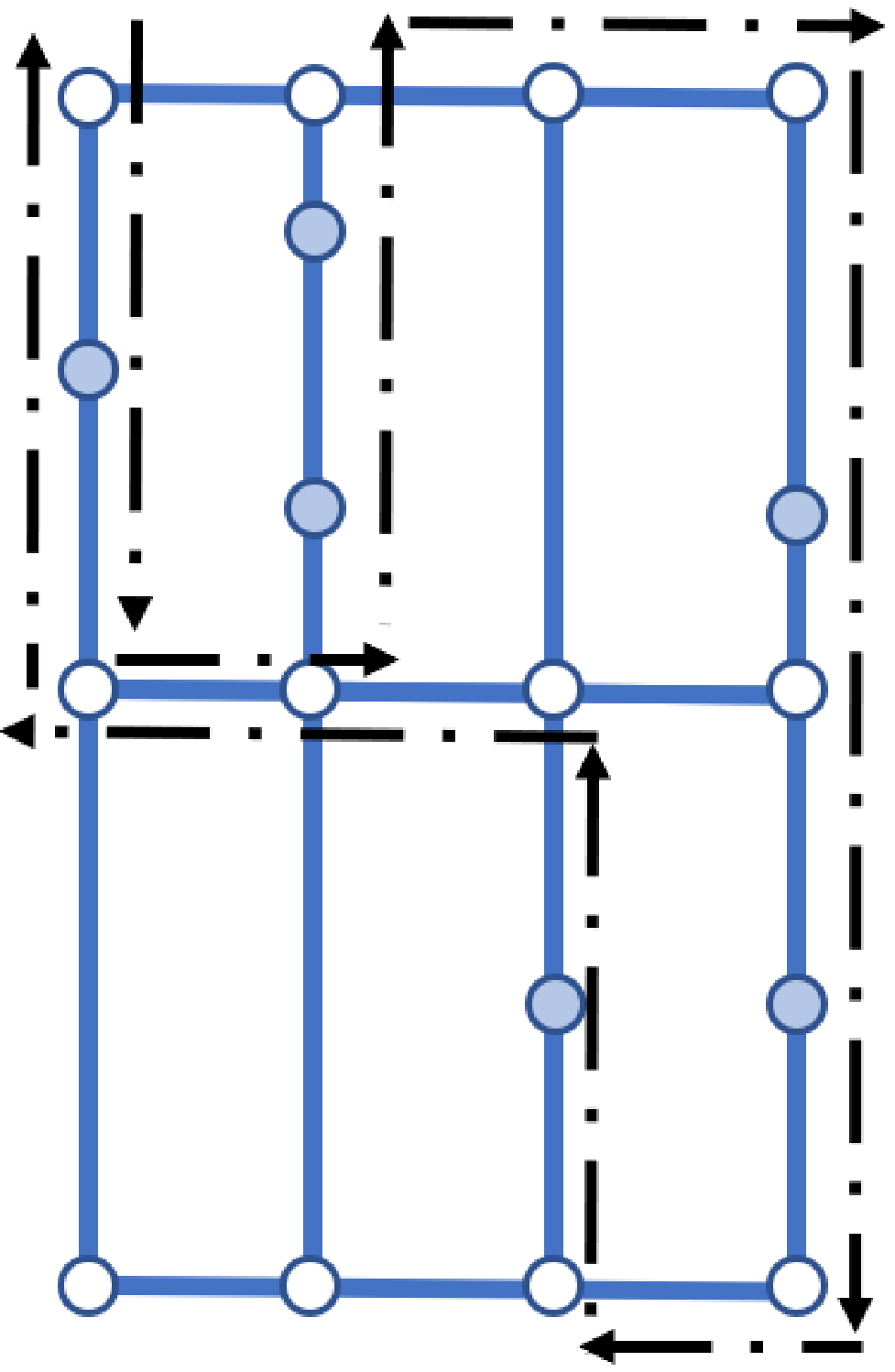}}}
\caption{The S-shape route} \label{s-shape}
\end{figure}

\begin{theorem}\label{Sshape}
\hl{There always exists an optimal picking tour which is done in an S-shape fashion.}
\end{theorem}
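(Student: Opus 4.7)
The plan is to take an arbitrary optimal picking tour $r^*$ and transform it, via a sequence of length-non-increasing operations, into a tour of type $r_S^1$ or $r_S^2$. First, by invoking Theorem~\ref{twotrip}, I would restrict attention to optimal tours in which every subaisle $i\in \{2,\ldots,W_{sub}\}$ is traversed at most once, so that any multiple full traversal must be confined to subaisle~$1$. This already pins down, up to subaisle~$1$, the multiset of fully traversed subaisles, namely $K_1$ and $K_2$ (possibly with $i_0$ counted twice).

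Next, I would decompose $r^*$ into maximal sub-paths lying entirely within block~$1$ and within block~$2$, and argue block-by-block that each such sub-path can be rearranged into an S-shape pattern without increasing length. This step is a direct adaptation of Lemmas~\ref{sig1}--\ref{sig3}: any U-turn inside a subaisle whose picking locations have already been collected elsewhere on the tour can be replaced by a straight full traversal, and any backtracking inside a cross-aisle after a reverse-direction partial visit can be eliminated by rerouting along the parallel portion of the cross-aisle. After this step, the tour visits every subaisle in $K_1\cup K_2$ by full traversal and in monotone left-to-right (or right-to-left) order within each block.

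I would then analyze the interface at cross-aisle~$2$. The parity/Eulerian condition at each artificial location there, together with the at-most-once constraints from Theorem~\ref{twotrip}, forces the block transitions into two cases: (a) two distinct aisles are used to descend into and ascend out of block~$2$, with subaisle~$1$ traversed at most once; or (b) only aisle~$1$ is used for both the descent and the ascent, in which case subaisle~$1$ is traversed twice. In case (a), arranging the fully traversed subaisles in left-to-right order within each block and ending the tour through aisle~$i_0$ produces a tour of type $r_S^1$. In case (b), the analogous rearrangement produces a tour of type $r_S^2$, in which the picker descends through subaisle~$1$, S-shapes through $K_1$ and $K_2$, and returns along subaisle~$1$.

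The main obstacle is the block-by-block S-shaping of step~2: the sub-paths inside each block are open walks with prescribed entry and exit artificial locations on the cross-aisles, not closed Eulerian circuits, so the single-block lemmas do not apply verbatim. I expect to need an exchange argument showing that any non-S-shape block sub-path can be straightened while preserving the multiset of cross-aisle entry/exit points and without increasing the total distance. A particularly delicate subcase is when only a few subaisles in a block belong to $K_j$ and the original tour has room to backtrack along a cross-aisle before transitioning; one must verify that the straightened block sub-paths still splice together with the chosen transition scheme into a single closed walk that visits every required picking location.
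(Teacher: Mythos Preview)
Your approach is genuinely different from the paper's, and the obstacle you flag at the end is exactly where the two diverge.

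The paper does \emph{not} transform an arbitrary optimal tour into S-shape form. Instead it argues directly about the total vertical distance. Writing $d$ for the length of a subaisle and $l_r$ for the vertical distance of a no-reversal route $r$, the paper notes the trivial lower bound $l_{r^*}\ge |K_1\cup K_2|\,d$, then computes the excess $l_{r_S^1}-|K_1\cup K_2|\,d$ explicitly in each of the four parity cases of $(|K_1|,|K_2|)$. When the excess is positive (the paper treats the case $|K_1|$ odd, $|K_2|$ even in detail), it assumes for contradiction that some no-reversal tour attains the bound $|K_1\cup K_2|\,d$, writes that tour as a sequence of subaisle traversals $i_1,\ldots,i_n$, partitions the sequence into maximal alternating runs $Q_1\subset K_1,\ Q_2\subset K_2,\ \ldots$, and derives a parity contradiction (every interior run has even length, forcing some subaisle to be traversed twice). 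So the core of the paper's argument is a counting/parity bound on vertical crossings, not an exchange argument on subpaths.

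Your transformation route is plausible in outline, but two issues deserve attention. First, Theorem~\ref{twotrip} is stated and proved for optimal walks in the \emph{general} (reversal-allowed) problem; an optimal \emph{no-reversal} tour is in general strictly longer than the general optimum, so you cannot invoke Theorem~\ref{twotrip} to impose the at-most-once property on your starting $r^*$ without a separate argument inside the no-reversal class. Second, the block-by-block straightening you propose via Lemmas~\ref{sig1}--\ref{sig3} is, as you yourself note, not literally applicable: those lemmas treat closed single-block tours, whereas your block sub-paths are open walks with prescribed endpoints on the cross-aisles, and moreover the replacement in Lemma~\ref{sig1} introduces a mid-subaisle reversal, so the intermediate tours leave the no-reversal class. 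None of this is fatal, but it means the real work in your approach lies precisely in the exchange argument you defer; the paper's parity bookkeeping sidesteps all of it.
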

\begin{proof}
\hl{We first consider the case where subaisle $1\in K_1$.} Let the total vertical distance of route $r$ \hl{be $l_{r}$} and the optimal route $r^*$. Let $d$ denotes the length of a subaisle. It can be seen that $l_{r^*} \geq |K_1\cup K_2|d$. We also have the following results for \hl{$r_S^1$}:
\begin{enumerate}
  \item If $|K_1|$ and $|K_2|$ are odd, we have that $l_{r_S^1}-|K_1\cup K_2|d = 2d$ when $1\in K_1$.
  \item If $|K_1|$ and $|K_2|$ are even, we have that $l_{r_S^1}-|K_1\cup K_2|d = 0$ when $1\in K_1$.
  \item If $|K_1|$ is odd and $|K_2|$ is even, we have that $l_{r_S^1}-|K_1\cup K_2|d = d$ when $1\in K_1$.
  \item If $|K_1|$ is even and $|K_2|$ is odd, we have that $l_{r_S^1}-|K_1\cup K_2|d = d$ when $1\in K_1$.
\end{enumerate}

\hl{We now show that $r_S^1$ is an optimal picking tour.} We only deal with the case when $|K_1|$ is odd and $|K_2|$ is even. Assume by contradiction that $l_{r^*}=|K_1\cup K_2|d$, which means $r^*$ will \hl{traverse} each selected subaisle exactly once. $r^*$ can be described as a sequence of subaisles $i_1,i_2,...i_n$. If $i_1,i_k\in K_1$ and $i_2,...,i_{k-1}\in K_2$, we can find that $k$ is even. $i_1,i_2,...i_n$ can be reformulated as $Q_1=\{i_1,i_2,...,i_{k_1}\}$,$Q_2=\{i_{k_1+1},i_2,...,i_{k_2}\}$,...,$Q_{2n+1}=\{i_{k_{2n}+1},i_{k_{2n}+2},...,i_{k_{2n+1}}\}$ where $Q_{2t+1}\subset K_1$ and $Q_{2t}\subset K_2$. Note that $|Q_2|,...,|Q_{2n}|$ are even and $|K_1|$ is odd, we can assume that $|Q_1|$ is even. This implies that $r^*$ must traverse $i_{k_1}$ twice, which is a contradiction.

\hl{Similarly, we can prove that there exist an optimal picking tour which is of type $r_S^1$ or $r_S^2$ when $1\notin K_1$. This completes the proof.}   
\end{proof}

\hl{Note that} the second cross-aisle is passed through by \hl{$r_S^1$ or $r_S^2$} exactly twice, \hl{and thus,} we \hl{can further tighten the feasible region}. Let $V_S$ be the set of \hl{vertices} in the second block, i.e., $V_S=\{W'\cup \{Q_S(v):v\in W'\}\}$. We use the following constraint to ensure that the second cross-aisle can be passed through at most twice by the picker. \hl{This constraint} is very effective at reducing solution times. 

\begin{alignat}{2} 
\label{less2con} \quad & \sum_{[u,v]\in \delta(V_S)}(x_{t[u,v]}+\tilde{x}_{t[u,v]}) \leq 2, &  \qquad & \forall t\in \mathcal{T} 
\end{alignat}

\hl{We finish this section with the following corollary.}

\begin{corollary}
There always exists an optimal solution satisfying constraints~(\ref{less2con}).
\end{corollary}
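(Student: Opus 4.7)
The plan is to derive the corollary as a direct consequence of Theorem~\ref{Sshape}: since there exists an optimal picking tour performed in S-shape fashion, it suffices to show that the natural encoding of an S-shape tour as a feasible solution of $P^2_U$ uses at most two edges of the cut $\delta(V_S)$, and this bound is independent of picker $t$.

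First I would fix an optimal tour $r^*$ of type $r_S^1$ or $r_S^2$ for a given picker $t\in\mathcal{T}$ (whose existence is guaranteed by Theorem~\ref{Sshape}) and encode it as values of $x$ and $\tilde{x}$ in the auxiliary graph constructed in Section~\ref{tsp2block}. Because every vertex of the second block in the auxiliary graph is either a copy $v'\in W'$ of a middle cross-aisle vertex or its southern neighbor $Q_S(v')$, one checks that $V_S$ is exactly the set of second-block vertices and that every edge of $\delta(V_S)$ must be either a type-$E_2^+$ edge of the form $[v',Q_N(v)]$ (linking a copy $v'$ to its northern neighbor $Q_N(v)$ in the original middle cross-aisle) or a type-$E_3^+$ edge of the form $[s,w]$ with $w\in V_S$.

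Next I would count boundary crossings. By construction of the two S-shape routes:
\begin{enumerate}
    \item In $r_S^2$, the picker enters the second block exactly once (through the subaisle following the last subaisle of $K_1$, contributing one edge of $E_2^+$ with the copy as its southern endpoint) and returns to the origin using a single $\tilde{x}$ edge in $E_3^+$ from some vertex of $V_S$, yielding exactly two edges in $\delta(V_S)$.
    \item In $r_S^1$, the picker enters the second block through one subaisle of $K_1\setminus\{i_0\}$ and leaves through another subaisle, using two $E_2^+$ edges and no $E_3^+$ edge across the cut, again totalling two.
\end{enumerate}

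The main obstacle is merely the bookkeeping required to verify that the S-shape tours produced in the proof of Theorem~\ref{Sshape} can be realized using each edge of the auxiliary graph at most once (so that the $0$/$1$ indicator variables $x$ and $\tilde{x}$ suffice); in particular, one must argue that the round-trip through each visited subaisle is represented by traversing the corresponding copy-edge in $E_2^+$ rather than the subaisle edge $e(i)$ twice. Once this correspondence is established for each picker, summing the contributions shows $\sum_{[u,v]\in\delta(V_S)}(x_{t[u,v]}+\tilde{x}_{t[u,v]})=2$, which is strictly stronger than~(\ref{less2con}) and completes the proof.
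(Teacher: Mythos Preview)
Your proposal is correct and follows essentially the same approach as the paper: the paper's proof is the single sentence ``This follows immediately from Theorem~\ref{Sshape},'' relying on the observation (stated just before the corollary) that the second cross-aisle is passed through by $r_S^1$ or $r_S^2$ exactly twice. You have simply spelled out the boundary-crossing count that the paper leaves implicit; your extra bookkeeping about realizing the S-shape tours with $0/1$ edge variables is precisely what the auxiliary graph (via the copies $W'$ and the edge set $E_2^+$) was designed to guarantee, so that part is also in line with the paper's intent.
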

\begin{proof}
This follows immediately from Theorem~\ref{Sshape}.
\end{proof}

\section{Computational results}\label{computrel}

The experiments were performed on an AMD Ryzen 7 4800H @2.90 GHz processor and 32 GB of RAM. The code was written in \hl{Python} and GUROBI 9.1.1 was used as the mixed-integer solver. The performances of our MIP formulations and additional constraints are tested by comparing the computational difficulties to find optimal solutions.

\subsection{Test problems}

Our formulations are tested over the publicly available benchmark instances at \url{http://www.dcc.ufmg.br/∼arbex/orderpicking.html}. It comes from a database of anonymized customer purchases over two years for a chain of supermarkets. A single order is generated by combing the purchases of a customer over the first $\Delta$ days with $\Delta\in \{5,10,20\}$. The warehouse layouts considered in our experiments are similar to that of \citep{VALLE2017817}; the slight difference is that we assume the origin is right from the first artificial location while they set the distance from the origin to the first artificial vertex is 4 meters. Our formulations for their warehouse layout are essentially the same. \hl{We set the available capacity of the picking vehicle $B=8$.} For every test instance, we define $T$ by solving a bin-packing problem.

\subsection{Computational results}

In this section, we compare our formulations \hl{with that} of \citep{VALLE2017817}. The formulation for the JOBPRP (resp., for the no-reversal JOBPRP) presented by \citep{VALLE2017817} is denoted as $P_O$ (resp., \hl{$P_{U}$}). \hl{To further improve our formulation, we take into account existing} constraints including \hl{aisles cuts,} artificial vertex reversal constraints \citep{VALLE2017817} and column \hl{inequalities} \citep{10.1007/s10107-006-0081-5}. \hl{To have a better comparison, we also use column inequalities to improved $P_O$. We do not present experimental results to verify the effectiveness of these constraints that existing studies have already illustrated. Details regarding test formulations are shown in Table~\ref{manymodels}.}

\begin{longtable}{p{2cm}p{11cm}}
\caption{Details about test formulations}
\label{manymodels}\\
\toprule

Notation                 & Explanation\\                                    
\midrule

$P_O$                     & $P_{basic}$ with the valid inequalities defined in \citep{VALLE2017817} and column inequalities \\
$P_G^+$  & $P_G$ with aisles
cuts, artificial vertex reversal constraints and column inequalities                                 \\
$P_F^+$  &  $P_F$ with artificial vertex reversal constraints and column inequalities                                      \\
$P_U$   &  the no-reversal formulation in \citep{VALLE2017817} with column inequalities                                                  \\
$P^{1+}_U$ & $P^{1}_U$ with column inequalities                       \\
$P^{2+}_U$ & $P^{2}_U$ with column inequalities \\            
\bottomrule
\end{longtable}
\hl{Remark that all formulations except $P_F^+$ are non-compact. The exponentially many constraints are generated sequentially in a branch-and-cut framework. For each candidate integral solution, we add constraints when the graph for some picker is disconnected. The connectivity condition is verified by a depth-first search. In addition, $P_F$ seems unable to benefit from aisles cuts and therefore $P_F^+$ does not include aisles cuts.}

In Table~\ref{exp1}, we compare formulation $P_O$, \hl{$P_G^+$ and $P_F^+$} on the selected instances by setting a time limit of 2400 s. Column $O$ corresponds to the number of orders. Column $T$ corresponds to the total time. Columns $UB$ and $LB$ represent the best upper and lower bounds obtained at the end of the search, respectively, when either the instance was solved to prove optimality or the time limit has hit. GAP is defined as $100\% \times \frac{UB-LB}{UB}$.

\begin{table}
\tbl{Comparison of the branch-and-cut algorithm based on formulations $P_O$,$P_G^+$ and $P_F^+$.}
{\begin{tabular}{llcccccccccccccc} \toprule
 & &\multicolumn{2}{l}{$P_O$} 
 & & & & \multicolumn{2}{l}{$P_G^+$}
 & & & & \multicolumn{2}{l}{$P_F^+$}\\ \cmidrule{3-6} \cmidrule{8-11} \cmidrule{13-16}
 $\Delta$ & O & T(seconds) & UB & LB & GAP(\%) & & T(seconds) & UB & LB & GAP(\%) & & T(seconds) & UB & LB & GAP(\%) \\ \midrule
 5 & 5 & 0.2 & 346 & 346 & 0 & & 0.24 & 346 & 346 & 0 & & 0.51 & 346 & 346 & 0\\
   & 10 & 2.91 & 578 & 578 & 0 & & 1.5 & 578 & 578 & 0 & & 92 & 578 & 578 & 0\\
   & 15 & 15 & 650 & 650 & 0 & & 7.8 & 650 & 650 & 0 & & 41 & 650 & 650 & 0\\
   & 16 & 390 & 766 & 766 & 0 & & 37 & 766 & 766 & 0 & & 236 & 766 & 766 & 0\\
   & 17 & 90 & 802 & 802 & 0 & & 30 & 802 & 802 & 0 & & 230 & 802 & 802 & 0 \\
   & 18 & 1821 & 840 & 840 & 0 & & 81 & 840 & 840 & 0 & & 691 & 840 & 840 & 0 \\
   & 19 & 2400 & 856 & 851 & 0.6 & & 135 & 856 & 856 & 0 & & 690 & 856 & 856 & 0 \\
   & 20 & 2400 & 906 & 758 & 16.3 & & 86 & 864 & 864 & 0 & & 770 & 864 & 864 & 0\\
   & 21 & 2400 & 892 & 884 & 0.9 & & 136 & 892 & 892 & 0 & & 1585 & 892 & 892 & 0 \\
   & 22 & 2400 & 902 & 868 & 3.8 & & 171 & 892 & 892 & 0 & & 1386 & 892 & 892 & 0 \\
   & 23 & 2400 & 912 & 877 & 3.8 & & 290 & 908 & 908 & 0 & & 2400 & 908 & 901 & 0.8 \\
   & 24 & 2400 & 1118 & 723 & 35.3 & & 2400 & 1059 & 925 & 12.7 & & 2400 & 1056 & 862 & 18.3 \\
   & 25 & 2400 & 1104 & 815 & 26.2 & & 2400 & 1102 & 954 & 13.4 & & 2400 & 1104 & 869 & 21.3 \\
   & 30 & 2400 & 1200 & 843 & 29.8 & & 2400 & 1206 & 961 & 20.3 & & 2400 & 1206 & 864 & 28.4 \\
\midrule
10 & 5 & 0.06 & 368 & 368 & 0 & & 0.06 & 368 & 368 & 0 & & 0.12 & 368 & 368 & 0\\
    & 10 & 40 & 656 & 656 & 0 & & 6.31 & 656 & 656 & 0 & & 34 & 656 & 656 & 0\\
    & 15 & 195 & 874 & 874 & 0 & & 59 & 874 & 874 & 0 & & 263 & 874 & 874 & 0\\
    & 16 & 178 & 926 & 926 & 0 & & 65 & 926 & 926 & 0 & & 311 & 926 & 926 & 0\\
    & 17 & 1188 & 960 & 960 & 0 & & 123 & 960 & 960 & 0 & & 996 & 960 & 960 & 0\\
    & 18 & 892 & 970 & 970 & 0 & & 106 & 970 & 970 & 0 & & 1112 & 970 & 970 & 0\\
    & 19 & 375 & 978 & 978 & 0 & & 166 & 978 & 978 & 0 & & 607 & 978 & 978 & 0\\
    & 20 & 454 & 984 & 984 & 0 & & 209 & 984 & 984 & 0 & & 1145 & 984 & 984 & 0\\
    & 21 & 320 & 990 & 990 & 0 & & 143 & 990 & 990 & 0 & & 1462 & 990 & 990 & 0\\
    & 22 & 1400 & 1000 & 1000 & 0 & & 180 & 1000 & 1000 & 0 & & 1880 & 1000 & 1000 & 0\\
    & 23 & 2400 & 1162 & 959 & 17.5 & & 2400 & 1140 & 978 & 14.2 & & 2400 & 1132 & 881 & 22.2\\
    & 24 & 2400 & 1218 & 824 & 32.3 & & 2400 & 1162 & 1012 & 12.9 & & 2400 & 1188 & 907 & 23.7\\
    & 25 & 2400 & 1192 & 935 & 21.6 & & 2400 & 1220 & 1006 & 17.5 & & 2400 & 1182 & 966 & 18.3\\
    & 30 & 2400 & 1326 & 955 & 28 & & 2400 & 1320 & 1083 & 18 & & 2400 & 1284 & 969 & 24.5\\
\midrule
20 & 5 & 19 & 570 & 570 & 0 & & 1.03 & 570 & 570 & 0 & & 7.5 & 570 & 570 & 0\\
    & 10 & 68 & 912 & 912 & 0 & & 25 & 912 & 912 & 0 & & 119 & 912 & 912 & 0\\
    & 15 & 2400 & 1026 & 1009 & 1.7 & & 58 & 1022 & 1022 & 0 & & 255 & 1022 & 1022 & 0\\
    & 16 & 2400 & 1206 & 1085 & 10 & & 745 & 1200 & 1200 & 0 & & 1409 & 1200 & 1200 & 0\\
    & 17 & 2400 & 1292 & 1029 & 20.4 & & 2059 & 1250 & 1250 & 0 & & 2400 & 1250 & 1161 & 7.1\\
    & 18 & 2400 & 1324 & 1070 & 19.2 & & 1166 & 1288 & 1288 & 0 & & 2400 & 1310 & 1145 & 12.6\\
    & 19 & 2400 & 1356 & 1093 & 19.4 & & 2400 & 1326 & 1146 & 13.6 & & 2400 & 1346 & 1108 & 17.7\\
    & 20 & 2400 & 1334 & 1087 & 18.5 & & 2400 & 1340 & 1292 & 3.6 & & 2400 & 1340 & 1189 & 11.3\\
    & 21 & 2400 & 1554 & 960 & 38.2 & & 2400 & 1542 & 1145 & 25.7 & & 2400 & 1534 & 1078 & 29.7\\
    & 22 & 2400 & 1702 & 909 & 46.6 & & 2400 & 1578 & 1162 & 26.4 & & 2400 & 1558 & 1135 & 27.2\\
    & 23 & 2400 & 1638 & 1030 & 37.1 & & 2400 & 1624 & 1130 & 30.4 & & 2400 & 1620 & 1115 & 31.2\\
    & 24 & 2400 & 1672 & 968 & 42.1 & & 2400 & 1640 & 1193 & 27.3 & & 2400 & 1652 & 1040 & 37\\
    & 25 & 2400 & -\textsuperscript{*} & 930 & - & & 2400 & 1648 & 1141 & 30.8 & & 2400 & 1644 & 1083 & 34.1\\
    & 30 & 2400 & - & 987 & - & & 2400 & 1900 & 1125 & 40.8 & & 2400 & 1944 & 988 & 49.2\\
\bottomrule
\end{tabular}}
\tabnote{\textsuperscript{*}The symbol '-' shows that GUROBI failed to find a feasible solution.}  
\label{exp1}
\end{table}

\hl{The} branch-and-\hl{cut} algorithm based on \hl{$P_G^+$} managed to solve most instances to proven optimality. Furthermore, it obtained the lowest gap or had the shortest computing time. Thus, we can state that \hl{$P_G^+$} outperforms the existing formulation $P_O$. However, \hl{$P_F^+$} is not as strong as \hl{$P_G^+$}. Although it provided a better gap (than $P_O$) for some instances (for example, instance with $\Delta=5,O\in[19,30]$), it performs poorly when $\Delta=10$. Furthermore, an increasing number of orders \hl{cause} a fast \hl{increase} in the solution time, even for \hl{$P_G^+$}. One possible reason is that as the number of order pickers increases, the number of symmetry branches in the search tree grows exponentially. \hl{In fact, even the relaxation $min\{f(x):(x,z,\alpha,\beta) \in P_{sub}, constraints~(\ref{bs1})-(\ref{bs2}),(\ref{bs5})-(\ref{bs8}),(\ref{bs10})\}$ (see Figure~\ref{feasol}(c)) is very difficult to solve when there are a large number of orders (and it cannot figure out the route for each picker).}

\hl{We also compare our formulations with two commonly used heuristics for order batching problem: the seed algorithm and the Clarke and Wright algorithm(II) \citep{doi:10.1080/002075499191094}. For each batch, we find an S-shape route to estimate the traveling distance (see section~\ref{tsp2block}). Typically, the Seed and CWII can provide feasible solutions within several seconds. However, the solutions seem to be far from optimal. Numerical results are given in Table~\ref{heutst} where the last column shows the currently best known solution.}

\begin{table}
\tbl{Experimental results for heuristic solution approaches}
{\begin{tabular}{lccccccccccccc} \toprule
  & &\multicolumn{2}{l}{$\Delta = 5$} 
 & & & \multicolumn{2}{l}{$\Delta = 10$}
 & & & \multicolumn{2}{l}{$\Delta = 20$} &\\ \cmidrule{2-4} \cmidrule{6-8} \cmidrule{10-12}
 O & Seed & CWII & Best & & Seed & CWII & Best & & Seed & CWII & Best \\ \midrule
 5 & 382 & 382 & 346 & & 382 & 382 & 368 & & 620 & 620 & 570  \\
 10 & 636 & 636 & 578 & & 726 & 726 & 656 & & 982 & 982 & 912  \\
 15 & 724 & 764 & 650 & & 922 & 922 & 874 & & 1146 & 1146 & 1022  \\
 16 & 910 & 882 & 766 & & 1058 & 1058 & 926 & & 1372 & 1372 & 1200  \\
 17 & 902 & 930 & 802 & & 1058 & 1058 & 960 & & 1488 & 1450 & 1250  \\
 18 & 942 & 980 & 840 & & 1058 & 1058 & 970 & & 1490 & 1490 & 1288  \\
 19 & 980 & 1030 & 856 & & 1096 & 1096 & 978 & & 1490 & 1490 & 1326  \\
 20 & 980 & 1018 & 864 & & 1096 & 1096 & 984 & & 1488 & 1488 & 1334  \\
 21 & 1018 & 1018 & 892 & & 1146 & 1146 & 990 & & 1716 & 1728 & 1534  \\
 22 & 1018 & 1018 & 892 & & 1146 & 1146 & 1000 & & 1754 & 1766 & 1558  \\
 23 & 1058 & 1058 & 908 & & 1284 & 1284 & 1132 & & 1812 & 1832 & 1620  \\
 24 & 1186 & 1158 & 1056 & & 1332 & 1332 & 1162 & & 1872 & 1872 & 1640  \\
 25 & 1246 & 1312 & 1102 & & 1372 & 1362 & 1182 & & 1872 & 1872 & 1644  \\
 30 & 1440 & 1360 & 1200 & & 1452 & 1490 & 1284 & & 2128 & 2166 & 1900  \\
\bottomrule
\end{tabular}}
\label{heutst}
\end{table}

\hl{We analyze the efficiency of the basic cuts and the single traversing constraints by adding them to $P_O$, $P_G^+$ and $P_F^+$. Note that the basic cuts are generated by performing a depth-first search algorithm, and the total running time is typically less than 0.5 seconds (0.03s-0.5s). Therefore we do not need to take into account the processing time of constructing the basic cuts. The original model and the strengthened model are compared by counting winning instances. An instance is a winner for model A compared with model B, if
}
\begin{enumerate}
  \item A finished within the time limit and B did not finish or required a \hl{longer} CPU time \hl{or}
  \item A \hl{obtained} a lower gap than B.
\end{enumerate} 
If the difference between the times or gaps \hl{is} below 1 s \hl{or 0.1\%}, respectively, the instance is not counted. \hl{For example, we can compare $P_O$ and $P^+_F$ in Table~\ref{exp1} by only considering the instances with $\Delta=5$. Then we can observe that $P_O$ has $3$ winners and $P_F^+$ has $10$ winners. Table~\ref{exp2},\ref{exp3} show the impacts of adding these additional constraints. Except the aggregated results, we also provide more detailed results in the appendix.}
\begin{table}
\tbl{Behavior of the basic cuts}
{\begin{tabular}{lccccc} \toprule
 & & $P_O$ & & & $P_O +$ basic cuts  \\ \midrule
$\Delta$ & & Win Rate(\%) & & & Win Rate(\%)  \\ \midrule
5 & & 69.2 & & & 30.8  \\
10 & & 46.2 & & & 53.8  \\
20 & & 30.8 & & & 69.2  \\
\midrule
 & & $P_G^+$ & & & $P_G^+ +$ basic cuts  \\ \midrule
$\Delta$ & & Win Rate(\%) & & & Win Rate(\%)  \\ \midrule
5 & & 25 & & & 75  \\
10 & & 41.7 & & & 58.3  \\
20 & & 46.2 & & & 53.8  \\
\midrule
 & & $P_F^+$ & & & $P_F^+ +$ basic cuts  \\ \midrule
$\Delta$ & & Win Rate(\%) & & & Win Rate(\%)  \\ \midrule
5 & & 58.3 & & & 41.7  \\
10 & & 53.8 & & & 46.2  \\
20 & & 71.4 & & & 28.6  \\
\bottomrule
\end{tabular}}
\label{exp2}
\end{table}

\begin{table}
\tbl{Behavior of the single traversing constraints}
{\begin{tabular}{lccccc} \toprule
 & & $P_O$ & & & $P_O +$ single traversing constraints \\ \midrule
$\Delta$ & & Win Rate(\%) & & & Win Rate(\%)  \\ \midrule
5 & & 61.5 & & & 38.5  \\
10 & & 53.8 & & & 46.2  \\
20 & & 58.3 & & & 41.7  \\
\midrule
 & & $P_G^+$ & & & $P_G^+ +$ single traversing constraints  \\ \midrule
$\Delta$ & & Win Rate(\%) & & & Win Rate(\%)  \\ \midrule
5 & & 50 & & & 50  \\
10 & & 25 & & & 75  \\
20 & & 30.8 & & & 69.2  \\
\midrule
 & & $P_F^+$ & & & $P_F^+ +$ single traversing constraints  \\ \midrule
$\Delta$ & & Win Rate(\%) & & & Win Rate(\%)  \\ \midrule
5 & & 23.1 & & & 76.9  \\
10 & & 41.7 & & & 58.3  \\
20 & & 58.3 & & & 41.7  \\
\bottomrule
\end{tabular}}
\label{exp3}
\end{table}

\hl{Table~\ref{exp2} shows the efficiency of the basic cuts. For formulation $P_O$, the basic cuts can improve at most $69.2\%$ instances (when $\Delta=20$). Similarly, the basic cuts can improve at most $75\%$ instances for $P_G^+$ (when $\Delta=5$). However, due to being compact and not requiring an explicit branch-and-cut implementation, $P_F^+$ seems to benefit less from the basic cuts. Table~\ref{exp3} shows the efficiency of the single traversing constraints. Although both $P_G^+$ (at most $75\%$ instances) and $P_F^+$ (at most $76.9$ instances) are able to benefit from the single traversing constraints, more than half of instances $P_O$ cannot be improved by these constraints. We also note that the basic cuts and the single traversing constraints} sometimes increase the solution time, which could be due to the interaction of these constraints and some built-in general-purpose cuts. Furthermore, we \hl{believe} that the single traversing \hl{constraints} should be given more consideration; This can lead to a much smaller feasible region and may induce other constraints or formulations.

In Table~\ref{exp4}, we compare formulation \hl{$P_U$} and \hl{$P_U^{1+}$} for a single-block warehouse setting a time limit of 300 s. We reduce the time limit, mainly because the no-reversal JOBPRP is much simpler than the JOBPRP. The branch-and-\hl{cut} algorithm based on \hl{$P_U^{1+}$} can solve all instances within several seconds. The main reason \hl{could be that} we successfully cut many symmetric solutions by designing an auxiliary graph. Similarly, we compare formulation \hl{$P_U$} and \hl{$P_U^{2+}$} for a 2-block warehouse in Table~\ref{exp5}, and \hl{$P_U^{2+}$} still outperforms \hl{$P_U$} for all instances.

\begin{table}
\tbl{Comparison of the branch-and-cut algorithm based on formulations $P_U$ and $P_U^{1+}$ for a single-block warehouse.}
{\begin{tabular}{llccccccccc} \toprule
 & &\multicolumn{2}{l}{$P_U$} 
 & & & & \multicolumn{2}{l}{$P_U^{1+}$}
 \\ \cmidrule{3-6} \cmidrule{8-11} 
 $\Delta$ & O & T(seconds) & UB & LB & GAP(\%) & & T(seconds) & UB & LB & GAP(\%) \\ \midrule
5 & 5 & 0.02 & 358 & 358 & 0 & & 0.01 & 358 & 358 & 0 \\
  & 10 & 0.13 & 634 & 634 & 0 & & 0.05 & 634 & 634 & 0 \\
  & 15 & 0.05 & 716 & 716 & 0 & & 0.01 & 716 & 716 & 0 \\
  & 20 & 1.72 & 982 & 982 & 0 & & 0.16 & 982 & 982 & 0 \\
  & 21 & 26 & 1064 & 1064 & 0 & & 0.21 & 1064 & 1064 & 0 \\
  & 22 & 41 & 1064 & 1064 & 0 & & 0.25 & 1064 & 1064 & 0 \\
  & 23 & 20 & 1064 & 1064 & 0 & & 0.27 & 1064 & 1064 & 0 \\
  & 24 & 300 & 1248 & 1140 & 8.7 & & 1.64 & 1248 & 1248 & 0 \\
  & 25 & 98 & 1258 & 1258 & 0 & & 2.21 & 1258 & 1258 & 0 \\
  & 26 & 64 & 1268 & 1268 & 0 & & 1.3 & 1268 & 1268 & 0 \\
  & 27 & 215 & 1278 & 1278 & 0 & & 1.17 & 1278 & 1278 & 0 \\
  & 28 & 235 & 1330 & 1330 & 0 & & 2.62 & 1330 & 1330 & 0 \\
  & 29 & 300 & 1350 & 1340 & 0.7 & & 1.2 & 1350 & 1350 & 0 \\
  & 30 & 300 & 1350 & 1304 & 0.3 & & 1.66 & 1350 & 1350 & 0 \\
\midrule
10 & 5 & 0.01 & 358 & 358 & 0 & & 0.01 & 358 & 358 & 0 \\
  & 10 & 0.07 & 716 & 716 & 0 & & 0.01 & 716 & 716 & 0 \\
  & 15 & 1.26 & 972 & 972 & 0 & & 0.19 & 972 & 972 & 0 \\
  & 20 & 1.27 & 992 & 992 & 0 & & 0.23 & 992 & 992 & 0 \\
  & 21 & 10 & 1064 & 1064 & 0 & & 0.18 & 1064 & 1064 & 0 \\
  & 22 & 2.56 & 1064 & 1064 & 0 & & 0.25 & 1064 & 1064 & 0 \\
  & 23 & 73 & 1248 & 1248 & 0 & & 1.22 & 1248 & 1248 & 0 \\
  & 24 & 70 & 1248 & 1248 & 0 & & 0.72 & 1248 & 1248 & 0 \\
  & 25 & 4.53 & 1268 & 1268 & 0 & & 0.65 & 1268 & 1268 & 0 \\
  & 26 & 35 & 1268 & 1268 & 0 & & 0.66 & 1268 & 1268 & 0 \\
  & 27 & 214 & 1330 & 1330 & 0 & & 0.63 & 1330 & 1330 & 0 \\
  & 28 & 119 & 1340 & 1340 & 0 & & 0.52 & 1340 & 1340 & 0 \\
  & 29 & 77 & 1340 & 1340 & 0 & & 0.7 & 1340 & 1340 & 0 \\
  & 30 & 82 & 1340 & 1340 & 0 & & 0.6 & 1340 & 1340 & 0 \\
\midrule
20 & 5 & 0.09 & 706 & 706 & 0 & & 0.03 & 706 & 706 & 0 \\
    & 10 & 0.88 & 992 & 992 & 0 & & 0.1 & 992 & 992 & 0 \\
    & 15 & 0.88 & 1074 & 1074 & 0 & & 0.12 & 1074 & 1074 & 0 \\
    & 20 & 31 & 1422 & 1422 & 0 & & 0.34 & 1422 & 1422 & 0 \\
    & 21 & 300 & 1698 & 1675 & 1.4 & & 1.92 & 1698 & 1698 & 0 \\
    & 22 & 300 & 1698 & 1672 & 1.5 & & 1.04 & 1698 & 1698 & 0 \\
    & 23 & 300 & 1770 & 1699 & 4 & & 1.21 & 1770 & 1770 & 0 \\
    & 24 & 300 & 1770 & 1693 & 4.4 & & 1.13 & 1770 & 1770 & 0 \\
    & 25 & 300 & 1780 & 1736 & 2.5 & & 0.88 & 1780 & 1780 & 0 \\
    & 26 & 300 & 1780 & 1774 & 0.3 & & 0.66 & 1780 & 1780 & 0 \\
    & 27 & 33 & 1780 & 1780 & 0 & & 1.85 & 1780 & 1780 & 0 \\
    & 28 & 300 & 2056 & 1903 & 7.4 & & 4.66 & 2056 & 2056 & 0 \\
    & 29 & 300 & 2056 & 1652 & 19.6 & & 5.19 & 2056 & 2056 & 0 \\
    & 30 & 300 & 2056 & 1780 & 13.4 & & 4.47 & 2056 & 2056 & 0 \\
\bottomrule
\end{tabular}}
\label{exp4}
\end{table}

\begin{table}
\tbl{Comparison of the branch-and-cut algorithm based on formulations $P_U$ and $P_U^{2+}$ for a 2-block warehouse.}
{\begin{tabular}{llccccccccc} \toprule
 & &\multicolumn{2}{l}{$P_U$} 
 & & & & \multicolumn{2}{l}{$P_U^{2+}$}
 \\ \cmidrule{3-6} \cmidrule{8-11} 
 $\Delta$ & O & T(seconds) & UB & LB & GAP(\%) & & T(seconds) & UB & LB & GAP(\%) \\ \midrule
5 & 5 & 0.05 & 382 & 382 & 0 & & 0.02 & 382 & 382 & 0 \\
  & 10 & 0.51 & 608 & 608 & 0 & & 0.2 & 608 & 608 & 0 \\
  & 15 & 1.47 & 696 & 696 & 0 & & 0.22 & 696 & 696 & 0 \\
  & 20 & 20 & 940 & 940 & 0 & & 1.91 & 940 & 940 & 0 \\
  & 21 & 5.34 & 940 & 940 & 0 & & 1.73 & 940 & 940 & 0 \\
  & 22 & 7.18 & 940 & 940 & 0 & & 1.2 & 940 & 940 & 0 \\
  & 23 & 8.2 & 950 & 950 & 0 & & 1.54 & 950 & 950 & 0 \\
  & 24 & 273 & 1108 & 1108 & 0 & & 31 & 1108 & 1108 & 0 \\
  & 25 & 152 & 1146 & 1146 & 0 & & 41 & 1146 & 1146 & 0 \\
  & 26 & 300 & 1194 & 1118 & 6.4 & & 45 & 1176 & 1176 & 0 \\
  & 27 & 300 & 1206 & 1185 & 1.7 & & 52 & 1206 & 1206 & 0 \\
  & 28 & 300 & 1234 & 1151 & 6.7 & & 67 & 1206 & 1206 & 0 \\
  & 29 & 300 & 1254 & 1184 & 5.6 & & 67 & 1254 & 1254 & 0 \\
  & 30 & 300 & 1254 & 1151 & 8.2 & & 48 & 1254 & 1254 & 0 \\
\midrule
10 & 5 & 0.02 & 382 & 382 & 0 & & 0.02 & 382 & 382 & 0 \\
  & 10 & 1.29 & 724 & 724 & 0 & & 0.29 & 724 & 724 & 0 \\
  & 15 & 4.74 & 922 & 922 & 0 & & 3.08 & 922 & 922 & 0 \\
  & 20 & 8 & 1020 & 1020 & 0 & & 0.97 & 1020 & 1020 & 0 \\
  & 21 & 132 & 1058 & 1058 & 0 & & 1.42 & 1058 & 1058 & 0 \\
  & 22 & 26 & 1058 & 1058 & 0 & & 1.13 & 1058 & 1058 & 0 \\
  & 23 & 237 & 1214 & 1214 & 0 & & 22 & 1214 & 1214 & 0 \\
  & 24 & 266 & 1254 & 1254 & 0 & & 19 & 1254 & 1254 & 0 \\
  & 25 & 213 & 1254 & 1254 & 0 & & 14 & 1254 & 1254 & 0 \\
  & 26 & 300 & 1302 & 1283 & 1.5 & & 17 & 1302 & 1302 & 0 \\
  & 27 & 300 & 1342 & 1302 & 3 & & 35 & 1342 & 1342 & 0 \\
  & 28 & 300 & 1352 & 1231 & 8.9 & & 23 & 1352 & 1352 & 0 \\
  & 29 & 300 & 1352 & 1264 & 6.5 & & 20 & 1352 & 1352 & 0 \\
  & 30 & 300 & 1352 & 1255 & 7.2 & & 22 & 1352 & 1352 & 0 \\
\midrule
20 & 5 & 0.2 & 620 & 620 & 0 & & 0.09 & 620 & 620 & 0 \\
   & 10 & 2.68 & 982 & 982 & 0 & & 0.69 & 982 & 982 & 0 \\
   & 15 & 59 & 1108 & 1108 & 0 & & 0.67 & 1108 & 1108 & 0 \\
   & 20 & 300 & 1432 & 1411 & 1.5 & & 11 & 1430 & 1430 & 0 \\
   & 21 & 300 & 1638 & 1513 & 7.6 & & 154 & 1626 & 1626 & 0 \\
   & 22 & 300 & 1646 & 1526 & 7.3 & & 114 & 1626 & 1626 & 0 \\
   & 23 & 300 & 1684 & 1534 & 8.9 & & 162 & 1648 & 1648 & 0 \\
   & 24 & 300 & 1744 & 1600 & 8.2 & & 234 & 1708 & 1708 & 0 \\
   & 25 & 300 & 1718 & 1565 & 8.9 & & 125 & 1714 & 1714 & 0 \\
   & 26 & 300 & 1746 & 1562 & 10.5 & & 148 & 1716 & 1716 & 0 \\
   & 27 & 300 & 1756 & 1589 & 9.5 & & 179 & 1736 & 1736 & 0 \\
   & 28 & 300 & 1932 & 1430 & 26 & & 300 & 1894 & 1538 & 18.8 \\
   & 29 & 300 & 1990 & 1444 & 27.4 & & 300 & 1932 & 1629 & 15.7 \\
   & 30 & 300 & 2066 & 1476 & 28.6 & & 300 & 1990 & 1734 & 12.9 \\
\bottomrule
\end{tabular}}
\label{exp5}
\end{table}

\section{Conclusions}\label{conclusion}

\hl{In this article, we investigate the JOBPRP, which is pivotal for the efficiency of order picking operations. To fully utilize the structure of the warehouse, we reconstruct the connectivity constraints. The obtained formulations, which consider separately the graph properties of picking locations and artificial locations, can significantly improve computational performance. We also provide two types of relevant additional constraints: one aims at dealing with batching decisions and routing decisions in an integrated way; the other aims at cutting off a subset of feasible solutions by the property of an optimal routing. Additionally, we consider the optimal routing for the no-reversal special case of this problem and propose TSP-based formulations. Our experimental results also show that the TSP-based formulations are very powerful and can significantly improve solution quality.} 

\hl{There are several potential topics for future research. First, graph-based mathematical formulations should consider the warehouse structure, which implies a need for polyhedral studies of different warehouses. For example, one might investigate the graph representation and the associated polytope for the HappyChic warehouse considered by \citep{BRIANT2020497}, which is slightly different from the rectangular warehouse considered in this paper. Second, one might improve traditional heuristic algorithms by analyzing the property of optimal solutions. Third, both the routing and batching problems suffer severely from the presence of symmetry. If we treat the batching problem as a partitioning problem, we can find many symmetry breaking methods (for example, column inequalities \citep{10.1007/s10107-006-0081-5}). One might make use of these symmetry breaking methods to improve different heuristics or exact methods. Fourth, as no-reversal routes are easy to implement in practice, it might be worthwhile to pay more attention to this special case. \citep{ARBEXVALLE2020460} demonstrated the feasibility of using easy-to-solve approximation programs to obtain high-quality no-reversal solutions. One might build up an approximation model that only considers some features of a feasible solution, and might study the accuracy of the estimation. 
}

\section*{Data availability statement} 
The data that support the findings of this study are available from the corresponding author, C.H. Gao, upon reasonable request.

\bibliographystyle{tfcad}
\bibliography{interactcadsample}

\newpage

\section*{Appendix}

\begin{longtable}{p{5cm}p{10cm}}
Notation                                                & Explanation                                                                                                                         \\
\midrule

\multicolumn{2}{l}{Sets}                                                                                                                                                                      \\
$\mathcal{T}$                                                       & set of available trolleys                                                                                                           \\
$O$                                                       & set of orders                                                                                                                       \\
$L_o$                                                   & set of picking locations of order $o$                                                                                                 \\
$V$                                                       & set of all locations                                                                                                                \\
$V_L$                                                    & set of picking locations                                                                                                            \\
$V_I$                                                    & set of artificial locations                                                                                                         \\
$V_{sub}(i)$                                               & set of picking locations within subaisle $i$                                                                                          \\
$\tilde{E}$                                                 & set of directed edges connecting neighboring locations                                                                              \\
$\tilde{E}'$                                                & set of directed edges connecting neighboring artificial locations while ignoring picking locations                                 \\
$\delta(S)$ & set of undirected edges with one end in set $S$                                                                             \\
$\delta^+(S) / \delta^-(S)$ & set of directed edges in $\tilde{E}$ that leave/enter set $S$                                                                             \\
$\eta^+(S) / \eta^-(S)$     & set of directed edges in $\tilde{E}'$ that leave/enter set $S$                                                                            \\
{}$W_{sub}${}                                            & the number of subaisles                                                                                                                    \\
\midrule

\multicolumn{2}{l}{Constants}                                                                                                                                                                 \\
$s$                                                       & the origin of the warehouse                                                                                                         \\
$(u,v)$                                                       &  the ordered pair of location u and
location v, which represents a directed edge                                           
\\
$[u,v]$                                                       &  the unordered pair of location u and
location v, which represents an undirected edge        \\
$f(i)/l(i)$                                               & the northern/southern artificial location of subaisle $i$                                                                             \\
$n(v)/s(v)$                                               & the adjacent northern/southern location of $v$                                                                                        \\
$Q_N(v)/Q_S(v)/Q_E(v)/Q_W(v)$                         & the adjacent northern/southern/eastern/western artificial location of artificial location $v$                                         \\
$b_o$                                                   & size of order $o$                                                                                                                     \\
$B$                                                       & available capacity of a trolley                                                                                                     \\
\midrule

\multicolumn{2}{l}{Variables}                                                                                                                                                                 \\
$x_{tuv}$                                                  & Binary variable that takes value 1 if and only if $(u,v)$ ($\in\tilde{E}$) is traversed by walk $t$                                                \\
$y_{tv}$                                                   & Binary variable that takes value 1 if and only if trolley $t$ visits location $v$                                                                 \\
$z_{ot}$                                                   & Binary variable that takes value 1 if and only if trolley $t$ picks order $o$                                                                     \\
$\alpha_{tv}/\beta_{tv}$                                   & Binary variable that takes value 1 only if there exists a straight path connecting the northern/southern artificial location and $v$ in walk $t$                                             \\
$\gamma_{tuv}$                                             & Binary variable that takes value 1 only if $[u,v]$ ($\in\tilde{E}'$) is traversed by walk $t$                                               \\
$\sigma^{v_0}_{tuv}$                          & Continuous variable that indicate the volume of flow from artificial location $v_0$ passing through arc $(u,v)$ ($\in\tilde{E}'$) in walk $t$\\

\end{longtable}

\begin{table}
\begin{tabular}{p{2cm}p{4cm}p{9cm}}
Formulation     & Constraints         & Explanation  \\           
\midrule 
\multicolumn{3}{l}{For Analysis} \\

$P_{sub}$   & (19)-(25)           & feasible region of subaisle cuts                                                                     \\
$P_{basic}$ & (9)-(18)            & the basic formulation for the JOBPRP                                                                 \\
$P_A$     & (9)-(25)            & the basic formulation with subaisle cuts                                                             \\
$P_g$     & (26)-(42)           & a formulation which only force artificial locations to be in the same connected component            \\
$P_f$     & (26)-(34), (36)-(46)           & a flow-based formulation which only force artificial locations to be in the same connected component \\
$P_G$     & (19)-(42)           & a non-compact improved formulation for the JOBPRP                                                               \\
$P_F$    & (19)-(34), (36)-(46) & a flow-based improved formulation for the JOBPRP  \\

$P^1_U$    & (52)-(63) & a TSP-based no-reversal formulation for a single-block warehouse  \\
$P^2_U$    & (64)-(74) & a TSP-based no-reversal formulation for a 2-block warehouse  \\

\midrule
\multicolumn{3}{l}{For Experiment} \\

$P_O$      & - & $P_{basic}$ with the valid inequalities defined in \citep{VALLE2017817} and column inequalities \\
$P_G^+$  & - & $P_G$ with aisles
cuts, artificial vertex reversal constraints and column inequalities                                 \\
$P_F^+$  & - & $P_F$ with artificial vertex reversal constraints and column inequalities                                      \\
$P_U$   & - &  the no-reversal formulation in \citep{VALLE2017817} with column inequalities                                                  \\
$P^{1+}_U$ & - & $P^{1}_U$ with column inequalities                       \\
$P^{2+}_U$ & - & $P^{2}_U$ with column inequalities    

\end{tabular}
\end{table}

\begin{table}
\tbl{Detailed results for the basic cuts}
{\begin{tabular}{llcccccccccccccc} \toprule
 & &\multicolumn{2}{l}{$P_O +$ basic cuts} 
 & & & & \multicolumn{2}{l}{$P_G^+ +$ basic cuts}
 & & & & \multicolumn{2}{l}{$P_F^+ +$ basic cuts}\\ \cmidrule{3-6} \cmidrule{8-11} \cmidrule{13-16}
 $\Delta$ & O & T(seconds) & UB & LB & GAP(\%) & & T(seconds) & UB & LB & GAP(\%) & & T(seconds) & UB & LB & GAP(\%) \\ \midrule
5 & 5 & 0.34 & 346 & 346 & 0 & & 0.21 & 346 & 346 & 0 & & 0.4 & 346 & 346 & 0\\
& 10 & 6.96 & 578 & 578 & 0 & & 3.23 & 578 & 578 & 0 & & 66 & 578 & 578 & 0\\
& 15 & 26 & 650 & 650 & 0 & & 9.4 & 650 & 650 & 0 & & 46 & 650 & 650 & 0\\
& 16 & 406 & 766 & 766 & 0 & & 37 & 766 & 766 & 0 & & 292 & 766 & 766 & 0\\
& 17 & 239 & 802 & 802 & 0 & & 46 & 802 & 802 & 0 & & 281 & 802 & 802 & 0\\
& 18 & 2400 & 870 & 806 & 7.4 & & 95 & 840 & 840 & 0 & & 830 & 840 & 840 & 0\\
& 19 & 2020 & 856 & 856 & 0 & & 72 & 856 & 856 & 0 & & 605 & 856 & 856 & 0\\
& 20 & 2067 & 864 & 864 & 0 & & 75 & 864 & 864 & 0 & & 745 & 864 & 864 & 0\\
& 21 & 2400 & 902 & 846 & 6.2 & & 270 & 892 & 892 & 0 & & 2400 & 892 & 850 & 4.7\\
& 22 & 2400 & 892 & 886 & 1.1 & & 190 & 892 & 892 & 0 & & 1045 & 892 & 892 & 0\\
& 23 & 2400 & 918 & 868 & 5.5 & & 416 & 908 & 908 & 0 & & 1465 & 908 & 908 & 0\\
& 24 & 2400 & - & 733 & - & & 2400 & 1064 & 934 & 12.2 & & 2400 & 1076 & 820 & 23.8\\
& 25 & 2400 & 1112 & 890 & 20 & & 2400 & 1108 & 944 & 14.8 & & 2400 & 1120 & 839 & 25.1\\
& 30 & 2400 & 1232 & 844 & 31.5 & & 2400 & 1212 & 942 & 22.3 & & 2400 & 1202 & 862 & 28.3\\
   
\midrule
10 & 5 & 0.08 & 368 & 368 & 0 & & 0.12 & 368 & 368 & 0 & & 0.09 & 368 & 368 & 0\\
& 10 & 45 & 656 & 656 & 0 & & 8.67 & 656 & 656 & 0 & & 27 & 656 & 656 & 0\\
& 15 & 122 & 874 & 874 & 0 & & 36 & 874 & 874 & 0 & & 224 & 874 & 874 & 0\\
& 16 & 138 & 926 & 926 & 0 & & 65 & 926 & 926 & 0 & & 430 & 926 & 926 & 0\\
& 17 & 408 & 960 & 960 & 0 & & 97 & 960 & 960 & 0 & & 501 & 960 & 960 & 0\\
& 18 & 410 & 970 & 970 & 0 & & 151 & 970 & 970 & 0 & & 1055 & 970 & 970 & 0\\
& 19 & 300 & 978 & 978 & 0 & & 156 & 978 & 978 & 0 & & 1421 & 978 & 978 & 0\\
& 20 & 524 & 984 & 984 & 0 & & 166 & 984 & 984 & 0 & & 1640 & 984 & 984 & 0\\
& 21 & 1410 & 990 & 990 & 0 & & 281 & 990 & 990 & 0 & & 2192 & 990 & 990 & 0\\
& 22 & 2203 & 1000 & 1000 & 0 & & 125 & 1000 & 1000 & 0 & & 2400 & 1014 & 938 & 7.5\\
& 23 & 2400 & 1344 & 743 & 44.7 & & 2400 & 1132 & 1011 & 10.7 & & 2400 & 1152 & 926 & 19.6\\
& 24 & 2400 & 1184 & 911 & 23.1 & & 2400 & 1168 & 1058 & 9.4 & & 2400 & 1194 & 937 & 21.5\\
& 25 & 2400 & 1230 & 945 & 23.2 & & 2400 & 1220 & 991 & 18.8 & & 2400 & 1196 & 962 & 19.6\\
& 30 & 2400 & 1306 & 958 & 26.6 & & 2400 & 1286 & 1052 & 18.2 & & 2400 & 1288 & 954 & 25.9\\
\midrule
20 & 5 & 13 & 570 & 570 & 0 & & 1.29 & 570 & 570 & 0 & & 8.7 & 570 & 570 & 0\\
& 10 & 57 & 912 & 912 & 0 & & 47 & 912 & 912 & 0 & & 113 & 912 & 912 & 0\\
& 15 & 2400 & 1022 & 1012 & 1 & & 63 & 1022 & 1022 & 0 & & 385 & 1022 & 1022 & 0\\
& 16 & 2400 & 1200 & 1088 & 9.3 & & 377 & 1200 & 1200 & 0 & & 1607 & 1200 & 1200 & 0\\
& 17 & 2400 & 1282 & 1047 & 18.3 & & 1602 & 1250 & 1250 & 0 & & 2400 & 1250 & 1183 & 5.4\\
& 18 & 2400 & 1330 & 1093 & 17.8 & & 2400 & 1296 & 1216 & 6.2 & & 2400 & 1302 & 1160 & 10.9\\
& 19 & 2400 & 1342 & 1188 & 11.5 & & 1799 & 1304 & 1304 & 0 & & 2400 & 1322 & 1187 & 10.2\\
& 20 & 2400 & 1352 & 1115 & 17.5 & & 2400 & 1332 & 1266 & 5 & & 2400 & 1352 & 1121 & 17.1\\
& 21 & 2400 & 1620 & 903 & 44.3 & & 2400 & 1520 & 1193 & 21.5 & & 2400 & 1518 & 1036 & 31.8\\
& 22 & 2400 & 1820 & 966 & 46.9 & & 2400 & 1532 & 1180 & 23 & & 2400 & 1536 & 1107 & 27.9\\
& 23 & 2400 & - & 989 & - & & 2400 & 1598 & 1175 & 26.5 & & 2400 & 1618 & 1044 & 35.5\\
& 24 & 2400 & - & 916 & - & & 2400 & 1640 & 1185 & 27.7 & & 2400 & 1652 & 1097 & 33.6\\
& 25 & 2400 & 1736 & 987 & 43.1 & & 2400 & 1674 & 1164 & 30.5 & & 2400 & 1654 & 1086 & 34.3\\
& 30 & 2400 & - & 979 & - & & 2400 & 1934 & 1104 & 42.9 & & 2400 & 1940 & 970 & 50\\
\bottomrule
\end{tabular}}
\end{table}

\begin{table}
\tbl{Detailed results for the single traversing constraints}
{\begin{tabular}{llcccccccccccccc} \toprule
 & &\multicolumn{2}{l}{$P_O +$ single traversing} 
 & & & & \multicolumn{2}{l}{$P_G^+ +$ single traversing}
 & & & & \multicolumn{2}{l}{$P_F^+ +$ single traversing}\\ \cmidrule{3-6} \cmidrule{8-11} \cmidrule{13-16}
 $\Delta$ & O & T(seconds) & UB & LB & GAP(\%) & & T(seconds) & UB & LB & GAP(\%) & & T(seconds) & UB & LB & GAP(\%) \\ \midrule
5 & 5 & 0.17 & 346 & 346 & 0 & & 0.17 & 346 & 346 & 0 & & 0.38 & 346 & 346 & 0\\
& 10 & 5.3 & 578 & 578 & 0 & & 2.16 & 578 & 578 & 0 & & 27 & 578 & 578 & 0\\
& 15 & 18 & 650 & 650 & 0 & & 17 & 650 & 650 & 0 & & 36 & 650 & 650 & 0\\
& 16 & 223 & 766 & 766 & 0 & & 33 & 766 & 766 & 0 & & 128 & 766 & 766 & 0\\
& 17 & 1460 & 802 & 802 & 0 & & 40 & 802 & 802 & 0 & & 208 & 802 & 802 & 0\\
& 18 & 1681 & 840 & 840 & 0 & & 68 & 840 & 840 & 0 & & 781 & 840 & 840 & 0\\
& 19 & 2400 & 872 & 831 & 4.7 & & 78 & 856 & 856 & 0 & & 363 & 856 & 856 & 0\\
& 20 & 2400 & 888 & 755 & 15 & & 139 & 864 & 864 & 0 & & 701 & 864 & 864 & 0\\
& 21 & 2400 & 898 & 871 & 3 & & 161 & 892 & 892 & 0 & & 1880 & 892 & 892 & 0\\
& 22 & 2400 & 898 & 857 & 4.6 & & 180 & 892 & 892 & 0 & & 735 & 892 & 892 & 0\\
& 23 & 2400 & 908 & 887 & 2.3 & & 251 & 908 & 908 & 0 & & 1385 & 908 & 908 & 0\\
& 24 & 2400 & 1112 & 819 & 26.3 & & 2400 & 1062 & 933 & 12.1 & & 2400 & 1078 & 831 & 22.9\\
& 25 & 2400 & 1136 & 764 & 32.7 & & 2400 & 1112 & 941 & 15.4 & & 2400 & 1098 & 894 & 18.6\\
& 30 & 2400 & 1234 & 847 & 31.4 & & 2400 & 1194 & 962 & 19.4 & & 2400 & 1198 & 919 & 23.3\\
\midrule
20 & 5 & 0.07 & 368 & 368 & 0 & & 0.13 & 368 & 368 & 0 & & 0.2 & 368 & 368 & 0\\
& 10 & 33 & 656 & 656 & 0 & & 6.44 & 656 & 656 & 0 & & 30 & 656 & 656 & 0\\
& 15 & 162 & 874 & 874 & 0 & & 50 & 874 & 874 & 0 & & 313 & 874 & 874 & 0\\
& 16 & 296 & 926 & 926 & 0 & & 45 & 926 & 926 & 0 & & 253 & 926 & 926 & 0\\
& 17 & 398 & 960 & 960 & 0 & & 168 & 960 & 960 & 0 & & 447 & 960 & 960 & 0\\
& 18 & 1519 & 970 & 970 & 0 & & 91 & 970 & 970 & 0 & & 895 & 970 & 970 & 0\\
& 19 & 742 & 978 & 978 & 0 & & 195 & 978 & 978 & 0 & & 2400 & 978 & 958 & 2\\
& 20 & 297 & 984 & 984 & 0 & & 155 & 984 & 984 & 0 & & 1502 & 984 & 984 & 0\\
& 21 & 1015 & 990 & 990 & 0 & & 125 & 990 & 990 & 0 & & 1515 & 990 & 990 & 0\\
& 22 & 325 & 1000 & 1000 & 0 & & 200 & 1000 & 1000 & 0 & & 1449 & 1000 & 1000 & 0\\
& 23 & 2400 & 1178 & 923 & 21.6 & & 2400 & 1128 & 1006 & 10.8 & & 2400 & 1150 & 939 & 18.3\\
& 24 & 2400 & 1188 & 904 & 23.9 & & 2400 & 1162 & 1021 & 12.1 & & 2400 & 1182 & 904 & 23.5\\
& 25 & 2400 & 1192 & 875 & 26.6 & & 2400 & 1210 & 1039 & 14.1 & & 2400 & 1196 & 952 & 20.4\\
& 30 & 2400 & 1320 & 862 & 34.7 & & 2400 & 1296 & 1074 & 17.1 & & 2400 & 1268 & 962 & 24.1\\
\midrule
5 & 5 & 8.9 & 570 & 570 & 0 & & 1.1 & 570 & 570 & 0 & & 4.4 & 570 & 570 & 0\\
& 10 & 125 & 912 & 912 & 0 & & 21 & 912 & 912 & 0 & & 157 & 912 & 912 & 0\\
& 15 & 2400 & 1022 & 1008 & 1.4 & & 68 & 1022 & 1022 & 0 & & 315 & 1022 & 1022 & 0\\
& 16 & 2400 & 1238 & 1045 & 15.6 & & 530 & 1200 & 1200 & 0 & & 2394 & 1200 & 1200 & 0\\
& 17 & 2400 & 1282 & 938 & 26.8 & & 1266 & 1250 & 1250 & 0 & & 2400 & 1250 & 1190 & 4.8\\
& 18 & 2400 & 1318 & 1187 & 9.9 & & 1568 & 1288 & 1288 & 0 & & 2400 & 1308 & 1076 & 17.7\\
& 19 & 2400 & 1350 & 1114 & 17.5 & & 2400 & 1316 & 1251 & 4.9 & & 2400 & 1308 & 1168 & 10.7\\
& 20 & 2400 & 1360 & 1072 & 21.2 & & 2400 & 1332 & 1264 & 5.1 & & 2400 & 1358 & 1109 & 18.3\\
& 21 & 2400 & 1744 & 1014 & 41.9 & & 2400 & 1510 & 1187 & 21.4 & & 2400 & 1542 & 1094 & 29.1\\
& 22 & 2400 & 1590 & 999 & 37.2 & & 2400 & 1534 & 1206 & 21.4 & & 2400 & 1566 & 1116 & 28.7\\
& 23 & 2400 & 1742 & 1020 & 41.4 & & 2400 & 1606 & 1165 & 27.5 & & 2400 & 1618 & 1117 & 31\\
& 24 & 2400 & 1672 & 940 & 43.8 & & 2400 & 1660 & 1143 & 31.1 & & 2400 & 1664 & 1133 & 31.9\\
& 25 & 2400 & - & 917 & - & & 2400 & 1680 & 1191 & 29.1 & & 2400 & 1672 & 1060 & 36.6\\
& 30 & 2400 & - & 967 & - & & 2400 & 1896 & 1134 & 40.2 & & 2400 & 1944 & 987 & 49.2\\
\bottomrule
\end{tabular}}
\end{table}

\end{document}